\tikzset{labelsize/.style={font=\scriptsize}}
\tikzset{string/.style={very thick}}
\tikzset{
  pto/.style={->,postaction={decorate},
    decoration={
        markings,
        mark=at position 0.5 with {\arrow{|}}}
  },
}
\tikzset{2cell/.style={-implies,double,double equal sign distance,shorten >=9pt, shorten <=10pt}}
\newcolumntype{N}{@{}m{0pt}@{}}
\numberwithin{equation}{subsection}
\declaretheorem[style=plain,sibling=equation,name=Theorem]{theorem}
\declaretheorem[style=plain,sibling=theorem,name=Proposition]{proposition}
\declaretheorem[style=plain,sibling=theorem,name=Corollary]{corollary}
\declaretheorem[style=definition,qed=$\blacksquare$,sibling=theorem,name=Definition]{definition}
\declaretheorem[style=definition,qed=$\blacksquare$,sibling=theorem,name=Example]{example}
\declaretheorem[style=definition,qed=$\blacksquare$,sibling=theorem,name=Remark]{remark}
\crefname{theorem}{Theorem}{Theorems}
\crefname{section}{Section}{Sections}
\crefname{subsection}{Subsection}{Subsections}
\crefname{definition}{Definition}{Definitions}
\crefname{notation}{Notation}{Notations}
\crefname{example}{Example}{Examples}
\crefname{remark}{Remark}{Remarks}
\crefname{equation}{}{}
\crefname{construction}{Construction}{Constructions}
\crefname{corollary}{Corollary}{Corollaries}
\crefname{proposition}{Proposition}{Propositions}
\crefname{lemma}{Lemma}{Lemmas}
\mathchardef\mhyphen="2D
\newcommand{\Set}{\mathbf{Set}}
\newcommand{\cat}[1]{{\mathcal{#1}}}
\newcommand{\N}{\mathbb{N}}
\newcommand{\G}{\mathbb{G}}
\newcommand{\op}{\mathrm{op}}
\newcommand{\enGph}[1]{{{#1}\mhyphen\mathbf{Gph}}}
\newcommand{\ppair}[1]{\langle #1\rangle} 
\newcommand{\Us}{{U_\mathrm{s}}}
\newcommand{\ar}{\mathop{\mathrm{ar}}}
\newcommand{\StrCats}[1]{{\mathbf{Str}\mhyphen{#1}\mhyphen\mathbf{Cat}_\mathrm{s}}} 
\newcommand{\WkCats}[1]{{\mathbf{Wk}\mhyphen{#1}\mhyphen\mathbf{Cat}_\mathrm{s}}} 
\newcommand{\WkCat}[1]{{\mathbf{Wk}\mhyphen{#1}\mhyphen\mathbf{Cat}}} 
\newcommand{\WkGpds}[1]{{\mathbf{Wk}\mhyphen{#1}\mhyphen\mathbf{Gpd}_\mathrm{s}}} 
\newcommand{\WkGpd}[1]{{\mathbf{Wk}\mhyphen{#1}\mhyphen\mathbf{Gpd}}} 
\newcommand{\GSet}{\mathbf{GSet}}
\newcommand{\id}[3]{\mathrm{id}_{#1}^{#2}(#3)} 
\newcommand{\oldcomp}[3]{\mathbin{{\prescript{}{#1}{\mkern-1.5mu\ast}_{#2}^{#3}}}}  
\newcommand{\comp}[2]{\mathbin{{\ast}_{#1}^{#2}}}  
\newcommand{\fulllabel}{\mathrm{fdl}}
\newcommand{\uum}{\boldsymbol{u^*}}
\newcommand{\uui}{\boldsymbol{u^\mathrm{id}}}
\newcommand{\uuo}{\boldsymbol{u^{p}}}
\newcommand{\one}{\mathbf{1}}
\newcommand{\two}{\mathbf{2}}
\newcommand{\cchi}{\boldsymbol{\chi}}
\newcommand{\phiinv}{\phi^{\mathrm{inv}}}
\newcommand{\uul}{\boldsymbol{\overline{u}}}
\newcommand{\spi}{\mathrm{sp}}
\newcommand{\maini}{{\bar{\imath}}}
\newcommand{\mainj}{{\bar{\jmath}}}
\newcommand{\OO}{G}
\newcommand{\kk}{\boldsymbol{k}}
\newcommand{\uu}{\boldsymbol{u}}
\newcommand{\xx}{\boldsymbol{x}}
\newcommand{\uuinv}{\boldsymbol{u^{\mathrm{inv}}}}
\newcommand{\xxinv}{\boldsymbol{x^{\mathrm{inv}}}}
\newcommand{\uut}{\boldsymbol{\widetilde{u}}}
\newcommand{\kkd}{\boldsymbol{k'}}
\newcommand{\kkdd}{\boldsymbol{k''}}
\newcommand{\pst}{\operatorname{pst}}
\begin{document}
\title{Weakly invertible cells in a weak $\omega$-category}
\author{Soichiro Fujii}
\address{School of Mathematical and Physical Sciences, Macquarie University, NSW 2109, Australia}
\email{s.fujii.math@gmail.com}

\author{Keisuke Hoshino}
  \address{Research Institute for Mathematical Sciences, Kyoto University, Kyoto, Japan}
  \email{hoshinok@kurims.kyoto-u.ac.jp}

\author{Yuki Maehara}
  \address{Research Institute for Mathematical Sciences, Kyoto University, Kyoto, Japan}
  \email{ymaehar@kurims.kyoto-u.ac.jp}

\date{\today}

\keywords{Weak $\omega$-category, weak $\omega$-groupoid, weakly invertible cell, equivalence.}
\subjclass[2020]{18N65, 18N20}

\begin{abstract}
We study weakly invertible cells in weak $\omega$-categories in the sense of Batanin--Leinster, adopting the coinductive definition of weak invertibility. We show that weakly invertible cells in a weak $\omega$-category are closed under globular pasting. Using this, we generalise elementary properties of weakly invertible cells known to hold in strict $\omega$-categories to weak $\omega$-categories, and show that every weak $\omega$-category has a largest weak $\omega$-subgroupoid.
\end{abstract}

\maketitle

\section{Introduction}
In (higher-dimensional) category theory, the ``correct'' notion of equivalence is often something weaker than that of equality.
When working inside an ordinary category, it is the notion of isomorphism; when working on the totality of all (small) categories, or more generally in a $2$-category, the word ``equivalence'' already has a well-established meaning.
For general $n$-categories where $n \in \mathbb{N}\setminus\{0\}$, one can define such notion of equivalence, or more generally the notion of weakly invertible $k$-cell, by induction: 
weakly invertible $n$-cells in an $n$-category $X$ are the same as strictly invertible $n$-cells, and a $k$-cell $u\colon x\to y$ for $0<k<n$ is weakly invertible if there exists a $k$-cell $v\colon y\to x$ and weakly invertible $(k+1)$-cells $vu\to 1_x$ and $uv\to 1_y$ in $X$. Unravelling the induction, a witness for weak invertibility of a $k$-cell $u\colon x\to y$ in an $n$-category involves a $k$-cell $v\colon y\to x$ together with $2^{m-k+1}$ $m$-cells of suitable types for each $k< m\leq n$, which are subject to $2^{n-k+1}$ equations at the top dimension $n$ (cf.~\emph{exponential wedge} in \cite[Section~1]{Street-oriental}).

In an $\omega$-category, one can still define weakly invertible cells in the same manner, using the unravelled description: a $k$-cell $u\colon x\to y$ in an $\omega$-category $X$ is weakly invertible if there exist a $k$-cell $v\colon y\to x$ and $2^{m-k+1}$ $m$-cells in $X$ for each $m>k$, of suitable types. (This time, we do not demand any equations between these cells.) 
This definition can be stated more succinctly as: a $k$-cell $u\colon x\to y$ in an $\omega$-category $X$ is \emph{weakly invertible} if there exist a $k$-cell $v\colon y\to x$ and \emph{weakly invertible} $(k+1)$-cells $vu\to 1_x$ and $uv\to 1_y$ in $X$. To remove ambiguity in this seemingly circular definition, we should also add that here we are defining weak invertibility \emph{coinductively} (see \cref{rmk:coinduction} for details of conduction). 
This notion of weakly invertible cell in an $\omega$-category plays a key role in e.g.~the definition of a model structure on the category of strict $\omega$-categories \cite{Lafont_Metayer_Worytkiewicz_folk_model_str_omega_cat} (see also \cite{Cheng_dual}). 

In this paper, we study weakly invertible cells in a \emph{weak $\omega$-category} in the sense of Leinster \cite{Leinster_book} (which is based on an earlier definition by Batanin \cite{Batanin_98}). Our main theorem (\cref{thm:main}) states that the set of weakly invertible cells in a weak $\omega$-category is closed under the operations of (globular) pasting.\footnote{We prove this only for globular pasting operations since Leinster's definition of weak $\omega$-category is based on them. Of course, the result trivially extends to whatever notion of pasting operation, as long as it is expressible as a repeated application of globular pasting operations.} For example, if we are given a 2-dimensional pasting diagram 
\begin{equation}
\label{eqn:example-pasting}
\begin{tikzpicture}[baseline=-\the\dimexpr\fontdimen22\textfont2\relax ]
      \node(20) at (0,0) {$a$};
      \node(21) at (2,0) {$b$};
      \node(22) at (4,0) {$c$};
      \node(23) at (6,0) {$d$,};

      \draw [->,bend left=30]  (20) to node[auto, labelsize] {$f$} (21);
      \draw [->,bend right=30] (20) to node[auto, swap,labelsize] {$g$} (21); 
      \draw [->]  (21) to node[auto, labelsize] {$h$} (22);
      \draw [->,bend left=60]  (22) to node[auto, labelsize] {$i$} (23);
      \draw [->]  (22) to node[midway,fill=white,labelsize] {$j$} (23);
      \draw [->,bend right=60]  (22) to node[auto, swap,labelsize] {$k$} (23);

      \draw [->]  (1,0.25) to node[auto,labelsize] {$\alpha$} (1,-0.25);
      \draw [->]  (5,0.58) to node[auto,labelsize] {$\beta$} (5,0.18);
      \draw [->]  (5,-0.18) to node[auto,labelsize] {$\gamma$} (5,-0.58);
\end{tikzpicture}
\end{equation}
in a weak $\omega$-category, in which all $2$-cells $\alpha,\beta$, and $\gamma$ appearing in the diagram are weakly invertible, then so is their composite, regardless of the way of ``bracketing.'' (Recall that in a weak $\omega$-category, composition is neither unital nor associative, so for example the $1$-dimensional domain of the above composite could be taken either as $(ih)f$ or $i(hf)$, and these might be different.) We remark that for the composite $2$-cell of \cref{eqn:example-pasting} to be weakly invertible, the 1-cells in the diagram (such as $f$) need not be weakly invertible. 

The special case of our main theorem, where there is no $n$-cell in an $n$-dimensional pasting diagram, can be regarded as a \emph{coherence} result for weak $\omega$-categories. For example, one can regard \cref{eqn:example-pasting} not as a $2$-dimensional pasting diagram but as a $3$-dimensional one in which no $3$-cell appears. Then, the assumption of the main theorem (i.e., all $3$-cells in \cref{eqn:example-pasting} should be weakly invertible) is vacuously satisfied. Consequently, we see that any two composite $2$-cells $\delta$ and $\delta'$ of \cref{eqn:example-pasting} are connected by a weakly invertible $3$-cell (provided that $\delta$ and $\delta'$ are parallel). See \cref{cor:coherence} and \cref{rmk:contractibility} for more details.

A typical application of our main theorem would proceed as follows.
Firstly, one takes some elementary fact about strict $\omega$-categories (stating either an equality between cells or existence of a weakly invertible cell) which one wishes to generalise to the weak setting.
Then one inserts a coherence cell whenever one sees an equality in the proof of the strict case.
This yields a chain of whiskerings of weakly invertible cells, which can be composed to a single cell.
Finally, using the main theorem, one deduces that the resulting cell is itself weakly invertible.
We give a couple of examples in \cref{cor:unique-inv,cor:invariance}.

Another immediate consequence of the main theorem is that any weak $\omega$-category $X$ has a largest weak $\omega$-subgroupoid $k(X)$ which we call the \emph{core} of $X$. Indeed, let $k(X)$ consist of all (hereditarily) weakly invertible cells of $X$. Then the main theorem ensures that $k(X)$ is a weak $\omega$-subcategory of $X$. We obtain a functor $k\colon \WkCats{\omega}\to\WkGpds{\omega}$ from the category of weak $\omega$-categories to its full subcategory consisting of all weak $\omega$-groupoids, which is characterised as the right adjoint of the inclusion functor $\WkGpds{\omega}\to \WkCats{\omega}$. This generalises the well-known universal property of the core groupoid functor $k\colon\mathbf{Cat}\to\mathbf{Gpd}$. 

\subsection*{Related work}
After presenting our main theorem at Category Theory 2023 in Louvain-la-Neuve, we learnt from Emily Riehl that tslil clingman had independently proved a similar result.
More precisely, Leinster gives a ``non-algebraic'' variant of his definition of weak $\omega$-category in \cite[Definition~L$'$]{Leinster_survey} and \cite[Section~10.2]{Leinster_book}.
In \cite[Section~3.2]{clingman_thesis}, clingman shows that the weakly invertible cells in a slight variant of these {non-algebraic} weak $\omega$-categories (called \emph{proof-relevant categories} in \cite{clingman_thesis}) are closed under compositions.


\subsection*{Outline of the paper}
In \cref{sec:weak-omega-cat} we recall Leinster's definition of weak $\omega$-category and introduce notations. In \cref{sec:main}, after recalling the coinductive definition of weakly invertible cells in a weak $\omega$-category and establishing a few facts about them, we state and prove the main theorem, and discuss a few applications including core weak $\omega$-groupoids of weak $\omega$-categories.

\section{Leinster's definition of weak \texorpdfstring{$\omega$}{omega}-category}
\label{sec:weak-omega-cat}
In this section, we review Leinster's definition of weak $\omega$-category~\cite{Leinster_book}, which is based on an earlier definition by Batanin~\cite{Batanin_98}.
Along the way, we also introduce some notations.

In short, we shall define a weak $\omega$-category as an Eilenberg--Moore algebra of a suitable monad $L$ on the category $\GSet$ of globular sets.
The monad $L$ for weak $\omega$-categories is defined using the monad $T$ on $\GSet$ for \emph{strict}  $\omega$-categories:
$L$ is the \emph{initial cartesian monad over $T$ with contraction}.
We shall explain these notions in order. 
See \cite{Leinster_book} for a more leisurely explanation. (In \cite{Leinster_book}, instead of cartesian monad over $T$, an equivalent notion of $T$-operad is used.)

\subsection{Globular sets}
Let $\G$ be the category freely generated by the graph
\[
\begin{tikzpicture}[baseline=-\the\dimexpr\fontdimen22\textfont2\relax ]
      \node(0) at (0,0) {$[0]$};
      \node(1) at (2,0) {$[1]$};
      \node(d) at (4,0) {$\cdots$};
      \node(n) at (6,0) {$[n]$};
      \node(d2) at (8,0) {$\cdots$};
      
      \draw [->,transform canvas={yshift=3pt}] (0) to node[auto, labelsize] 
      {$\sigma_0$} (1); 
      \draw [->,transform canvas={yshift=-3pt}] (0) to node[auto, 
      swap,labelsize] 
      {$\tau_0$} (1); 
      \draw [->,transform canvas={yshift=3pt}] (1) to node[auto, labelsize] 
      {$\sigma_1$} (d); 
      \draw [->,transform canvas={yshift=-3pt}] (1) to node[auto, 
      swap,labelsize] 
      {$\tau_1$} (d); 
      \draw [->,transform canvas={yshift=3pt}] (d) to node[auto, labelsize] 
      {$\sigma_{n-1}$} (n); 
      \draw [->,transform canvas={yshift=-3pt}] (d) to node[auto, 
      swap,labelsize] 
      {$\tau_{n-1}$} (n); 
      \draw [->,transform canvas={yshift=3pt}] (n) to node[auto, labelsize] 
      {$\sigma_{n}$} (d2); 
      \draw [->,transform canvas={yshift=-3pt}] (n) to node[auto, 
      swap,labelsize] 
      {$\tau_{n}$} (d2);
\end{tikzpicture}
\]
subject to the relations
\[
\sigma_{n+1}\circ \sigma_{n} =\tau_{n+1}\circ \sigma_{n},\qquad \sigma_{n+1}\circ \tau_{n}=\tau_{n+1}\circ \tau_{n} \qquad (\forall n\in \N).
\]
Explicitly, we have
\[
\G\bigl([m],[n]\bigr)=
\begin{cases}
    \{\sigma_m^n,\tau_m^n\} &\text{if $m<n$;}\\
    \{\mathrm{id}_{[n]}\}   &\text{if $m=n$;}\\
    \emptyset       &\text{if $m>n$,}
\end{cases}
\]
where $\sigma_m^n=\sigma_{n-1}\circ \sigma_{n-2}\circ \dots \circ \sigma_{m}$ and $\tau_m^n=\tau_{n-1}\circ \tau_{n-2}\circ \dots \circ \tau_{m}$. 

A \emph{globular set} is a functor $\G^\op\to\Set$, and the category $\GSet$ of globular sets is defined to be the presheaf category $[\G^\op,\Set]$.
Given a globular set $X$, the set $X[n]$ is written as $X_n$ and its elements are called \emph{$n$-cells} of $X$. 
If $m<n$, we denote the function $X\sigma^n_m\colon X_{n}\to X_m$ by $s^X_m$ and $X\tau^n_m$ by $t^X_m$. 
Two $n$-cells $x$ and $y$ of $X$ are said to be \emph{parallel} if $s_{m}^X(x)=s_{m}^X(y)$ and $t_{m}^X(x)=t_{m}^X(y)$ hold for each integer $m$ with $0\leq m<n$.
In other words, all $0$-cells of $X$ are parallel, and when $n\geq 1$, $x,y\in X_n$ are parallel if and only if $s_{n-1}^X(x)=s_{n-1}^X(y)$ and $t_{n-1}^X(x)=t_{n-1}^X(y)$ hold.
For an $n$-cell $u$ of $X$ ($n\geq 1$), we write $u\colon x\to y$ to mean $s_{n-1}^X(u)=x$ and $t_{n-1}^X(u)=y$.

The representable globular set $\G(-,[n])$ is denoted by $\OO^{n}$; it has precisely one $n$-cell, two $m$-cells for $0\leq m<n$ and no $m$-cells for $m>n$. 
For any globular set $X$, the morphisms $\OO^n\to X$ correspond to the $n$-cells of $X$ by the Yoneda lemma; for $x\in X_n$, we also denote the corresponding morphism by $x\colon \OO^n\to X$.
For $m<n$, the natural transformation $\G(-,\sigma_m^n)$ is denoted by $\sigma_m^n\colon \OO^m\to\OO^n$; similarly, $\G(-,\tau_m^n)$ is denoted by $\tau_m^n$.
Let $\partial \OO^n$ be the largest proper globular subset of $\OO^n$; in other words, $\partial \OO^n$ is obtained from $\OO^n$ by removing its unique $n$-cell. 
We denote the associated inclusion by $\iota_n\colon \partial\OO^n\to \OO^n$. 
These morphisms may be depicted as follows:
\[
\begin{tikzpicture}[baseline=-\the\dimexpr\fontdimen22\textfont2\relax ]
      \node(11) at (0,1) {$\bigg( \quad\bigg) $};
      \node(21) at (0,-1) {$\bigg(\bullet\bigg)$};
      
      \draw [->] (0, 0.4) to node[auto,labelsize]{$\iota_0$} (0,-0.4);
\end{tikzpicture}\qquad
\begin{tikzpicture}[baseline=-\the\dimexpr\fontdimen22\textfont2\relax ]
      \node(11) at (0,1) {$\bigg( \bullet$};
      \node(12) at (1.5,1) {$\bullet \bigg)$};
      \node(21) at (0,-1) {$\bigg( \bullet$};
      \node(22) at (1.5,-1) {$\bullet \bigg)$};
      
      \draw [->]  (21) to(22);      
      
      \draw [->] (0.75, 0.4) to  node[auto,labelsize]{$\iota_1$} (0.75,-0.4);
\end{tikzpicture}\qquad
\begin{tikzpicture}[baseline=-\the\dimexpr\fontdimen22\textfont2\relax ]
      \node(11) at (0,1) {$\bigg( \bullet$};
      \node(12) at (1.5,1) {$\bullet \bigg)$};
      \node(21) at (0,-1) {$\bigg( \bullet$};
      \node(22) at (1.5,-1) {$\bullet \bigg)$};
      
      \draw [->,bend left=30]  (11) to node (1u) {} (12);
      \draw [->,bend right=30] (11) to node (1b) {} (12);
      \draw [->,bend left=30]  (21) to node (2u) {} (22);      
      \draw [->,bend right=30] (21) to node (2b) {} (22); 
      
      \draw [->] (2u) to (2b);
      
      \draw [->] (0.75, 0.4) to  node[auto,labelsize]{$\iota_2$} (0.75,-0.4);
\end{tikzpicture}\qquad
\begin{tikzpicture}[baseline=-\the\dimexpr\fontdimen22\textfont2\relax ]
      \node(11) at (0,1) {$\bigg( \bullet$};
      \node(12) at (1.5,1) {$\bullet \bigg)$};
      \node(21) at (0,-1) {$\bigg( \bullet$};
      \node(22) at (1.5,-1) {$\bullet \bigg)$};
      
      \draw [->,bend left=30]  (11) to node (1u) {} (12);
      \draw [->,bend right=30] (11) to node (1b) {} (12);
      \draw [->,bend left=30]  (21) to node (2u) {} (22);      
      \draw [->,bend right=30] (21) to node (2b) {} (22); 
      
      \draw [->,transform canvas={xshift=-0.45em}, bend right=30] (1u) to (1b);
      \draw [->,transform canvas={xshift=0.45em}, bend left=30]  (1u) to (1b);
      \draw [->,transform canvas={xshift=-0.45em}, bend right=30] (2u) to node (3s) {} (2b);
      \draw [->,transform canvas={xshift=0.45em}, bend left=30]  (2u) to node (3t) {} (2b);
      
      \draw [->] (0.6,-1) to (0.9,-1);
      \draw [->] (0.75, 0.4) to  node[auto,labelsize]{$\iota_3$} (0.75,-0.4);
\end{tikzpicture}
\quad \ \dots
\] 
Inductively, $\partial\OO^0$ is the empty (initial) globular set, and for $n\geq 1$, we have a pushout diagram
\begin{equation*}
\begin{tikzpicture}[baseline=-\the\dimexpr\fontdimen22\textfont2\relax ]
      \node(00) at (0,1) {$\partial\OO^{n-1}$};
      \node(01) at (2,1) {$\OO^{n-1}$};
      \node(10) at (0,-1) {$\OO^{n-1}$};
      \node(11) at (2,-1) {$\partial\OO^n$};
      
      \draw [->] (00) to node[auto, labelsize] {$\iota_{n-1}$} (01); 
      \draw [->] (01) to node[auto, labelsize] {} (11); 
      \draw [->] (00) to node[auto,swap,labelsize] {$\iota_{n-1}$} (10); 
      \draw [->] (10) to node[auto,swap,labelsize] {} (11);
\end{tikzpicture}
\end{equation*}
in $\GSet$.
Therefore for any globular set $X$, the morphisms $\partial\OO^n\to X$ correspond to the parallel pairs of $(n-1)$-cells if $n\geq 1$; for each pair $(u,v)$ of parallel $(n-1)$-cells in $X$, we denote the corresponding morphism by $\ppair{u,v}\colon\partial\OO^n\to X$. 

\subsection{The free strict \texorpdfstring{$\omega$}{omega}-category monad \texorpdfstring{$T$}{T}}
\label{subsec:T}
Let $\StrCats{\omega}$ be the category of small strict $\omega$-categories and strict $\omega$-functors.
The forgetful functor $\StrCats{\omega}\to \GSet$ is monadic~\cite[Theorem~F.2.2]{Leinster_book}, and the induced monad on $\GSet$ is denoted by $T=(T,\eta^T,\mu^T)$. 

As the definition of the monad $L$ for weak $\omega$-categories depends on $T$, let us investigate the structure of the monad $T$.
To this end, it is helpful to use the following notion.
\begin{definition}[{See \cite[Section~2.1]{Weber-thesis} and \cite[Section~4]{Weber-generic}}]
    A (globular) \emph{pasting scheme} is a table (i.e., a finite sequence) of non-negative integers
    \[
    \kk = \begin{bmatrix}
    k_0 & & k_1 & & \dots & & k_r\\
    & \underline{k}_1 & & \underline{k}_2 & \dots & \underline{k}_r &
    \end{bmatrix}
    \]
    with $r\ge 0$ and $k_{i-1} > \underline k_i < k_i$ for all $1 \le i \le r$. We call $r$ the \emph{rank} of $\kk$.
    For $n\ge 0$, a \emph{pasting scheme of dimension $n$} is a pasting scheme $\kk$ as above which moreover satisfies $k_i \le n$ for all $0 \le i \le r$.
    
    Let $X$ be a globular set and $\kk$ be a pasting scheme of rank $r$.
    A \emph{pasting diagram of shape $\kk$ in $X$} is a table
    \[
    \uu = \begin{bmatrix}
    u_0 & & u_1 & & \dots & & u_r\\
    & \underline{u}_1 & & \underline{u}_2 & \dots & \underline{u}_r &
    \end{bmatrix}
    \]
    of cells $u_i \in X_{k_i}$ for $0 \le i \le r$ and $\underline u_i \in X_{\underline k_i}$ for $1 \le i \le r$, such that
    \[
    t_{\underline k_i}^X(u_{i-1}) =\underline u_i = s_{\underline k_i}^X(u_i)
    \]
    for all $1 \le i \le r$.
    The pasting diagram $\uu$ is \emph{of dimension $n$} (resp.~\emph{of rank $r$}) if its shape $\kk$ is so.
\end{definition}
Each pasting scheme
    \[
    \kk = \begin{bmatrix}
    k_0 & & k_1 & & \dots & & k_r\\
    & \underline k_1 & & \underline k_2 & \dots & \underline k_r &
    \end{bmatrix}
    \]
has an associated globular set $\widehat{\kk}$ called its \emph{realisation} (see e.g.~\cite[Section~8.1]{Leinster_book}, \cite[Section~2.1]{Weber-thesis}, and \cite[Section~4]{Weber-generic}). It is defined as the colimit of the diagram 
\begin{equation*}
\begin{tikzpicture}[baseline=-\the\dimexpr\fontdimen22\textfont2\relax ]
      \node(00) at (0,0.75) {$\OO^{k_0}$};
      \node(01) at (3,0.75) {$\OO^{k_1}$};
      \node(02) at (9.5,0.75) {$\OO^{k_r}$};
      \node(10) at (1.5,-0.75) {$\OO^{\underline k_1}$};
      \node(11) at (4.5,-0.75) {$\OO^{\underline k_2}$};
      \node(12) at (8,-0.75) {$\OO^{\underline k_r}$}; 
      \node at (6.25,0) {$\dots$};
      \node[rotate=45] (a) at (5.5,0.25) {$\dots$};
      \node[rotate=-45] (a) at (7,0.25) {$\dots$};
      
      \draw [->] (10) to node[auto, labelsize] {$\tau_{\underline k_1}^{k_0}$} (00); 
      \draw [->] (10) to node[auto, swap, labelsize] {$\sigma_{\underline k_1}^{k_1}$} (01); 
      \draw [->] (11) to node[auto,labelsize] {$\tau_{\underline k_2}^{k_1}$} (01);
      \draw (11) to node[auto,labelsize] {} (5.25,0);
      \draw (12) to node[auto,labelsize] {} (7.25,0);
      \draw [->] (12) to node[auto,swap,labelsize] {$\sigma_{\underline k_r}^{k_r}$} (02);
\end{tikzpicture}
\end{equation*}
in $\GSet$.
The pasting diagrams of shape $\kk$ in $X$ correspond to the morphisms of globular sets $\widehat{\kk}\to X$.
The following are examples of pasting schemes and their realisations. 
\begin{equation*}
\label{eqn:globular_pasting_shcemes}
\begin{tikzpicture}[baseline=-\the\dimexpr\fontdimen22\textfont2\relax ]
      \node(21) at (0,0) {$\bullet$};
      \node(l1) at (0,1.5) {$\begin{bmatrix}
    0\\
    \ 
    \end{bmatrix}$};
\end{tikzpicture}
\qquad
\begin{tikzpicture}[baseline=-\the\dimexpr\fontdimen22\textfont2\relax ]
      \node(31) at (-0.5,0) {$\bullet$};
      \node(32) at (0.75,0) {$\bullet$};
      \node(33) at (2,0) {$\bullet$};
      \node(l0) at (0.75,1.5) {$\begin{bmatrix}
    1 & & 1\\
    & 0 & 
    \end{bmatrix}$};
      
      \draw [->]  (31) to (32);
      \draw [->] (32) to  (33);
\end{tikzpicture}
\qquad
\begin{tikzpicture}[baseline=-\the\dimexpr\fontdimen22\textfont2\relax ]
      \node(20) at (-1.5,0) {$\bullet$};
      \node(21) at (0,0) {$\bullet$};
      \node(22) at (1.5,0) {$\bullet$};
      \node(23) at (3,0) {$\bullet$};
      \node(l0) at (0.75,1.5) {$\begin{bmatrix}
    2 & & 1 & & 2 & & 2\\
    & 0 & & 0 & & 1 & 
    \end{bmatrix}$};
      
      \draw [->]  (21) to (22);
      
      \draw [->,bend left=30]  (20) to node (2u) {} (21);      
      \draw [->,bend right=30] (20) to node (2b) {} (21); 
      \draw [->] (2u) to (2b);

      \draw [->,bend left=50]  (22) to node (3u) {} (23);
      \draw [->] (22) to node (3m) {} (23);
      \draw [->,bend right=50] (22) to node (3b) {} (23);
      \draw [->] (3u) to (3m);
      \draw [->] (3m) to (3b);
\end{tikzpicture}
\end{equation*}

Let $1$ be the terminal globular set. Then $T1$ is the underlying globular set of the free strict $\omega$-category over $1$.
We claim that the set $(T1)_n$ of all $n$-cells of $T1$ can be identified with the set of all pasting schemes of dimension $n$. Indeed, by \cite[Proposition~F.2.3]{Leinster_book}, $(T1)_n$ can be described inductively as:
\begin{itemize}
    \item $(T1)_0$ is a singleton, and
    \item for $n>0$, $(T1)_n$ is the underlying set of the free monoid on $(T1)_{n-1}$.
\end{itemize} 
Writing the unique element of $(T1)_0$ as $[\ ]$ and an element of the free monoid on a set $A$ as a list $[a_1,\dots,a_n]$ of elements $a_1,\dots,a_n\in A$, we have for example $[[[\ ]],[\ ],[[\ ],[\ ]]]\in (T1)_2$; in general, $(T1)_n$ consists of all such iterated lists of depth at most $n+1$. The elements of  $(T1)_n$ can be equivalently described by a sequence of integers $[m_0,m_1,\dots,m_l]$ with
\begin{itemize}
    \item $l\geq 2$,
    \item $m_0=m_l=-1$,
    \item $|m_i-m_{i-1}|=1$ for all $1\leq i\leq l$, and
    \item $0\leq m_i\leq n$ for all $1\leq i\leq l-1$.
\end{itemize}
Indeed, given an iterated list, one can produce such a sequence of integers starting from $-1$ by reading the characters of the list from left to right under the following rules: add $1$ for each opening bracket $[$ and subtract $1$ for each closing bracket $]$ (and ignore the commas). For example, $[[[\ ]],[\ ],[[\ ],[\ ]]]$ turns into the sequence $[-1,0,1,2,1,0,1,0,1,2,1,2,1,0,-1]$. Such a sequence of integers $[m_0,m_1,\dots,m_l]$ (which corresponds to a \emph{smooth zig-zag sequence} in \cite{Weber-thesis,Weber-generic}) can in turn be reconstructed from its subsequence consisting of the inner minimal and maximal elements, i.e., $m_i$ with $0<i<l$ and $m_{i-1}=m_{i+1}$. Thus $[-1,0,1,2,1,0,1,0,1,2,1,2,1,0,-1]$ corresponds to the pasting scheme (which is called a \emph{zig-zag sequence} in \cite{Weber-thesis,Weber-generic}) 
\[
\begin{bmatrix}
    2 & & 1 & & 2 & & 2\\
    & 0 & & 0 & & 1 & 
    \end{bmatrix}
\]
we saw earlier.

Note that a single pasting scheme $\kk$ appears as cells in $T1$ of different dimensions. For clarity, we sometimes write $\kk^{(n)}$ when $\kk$ is being regarded as an $n$-cell of $T1$.
We say that an $n$-cell $\kk$ of $T1$ is \emph{degenerate} if $\kk$ is also an $(n-1)$-cell of $T1$. Thus
    \[
    \kk = \begin{bmatrix}
    k_0 & & k_1 & & \dots & & k_r\\
    & \underline k_1 & & \underline k_2 & \dots & \underline k_r &
    \end{bmatrix}
    \]
is degenerate as an $n$-cell of $T1$ if and only if $\max\{k_0,\dots, k_r\}<n$. Otherwise, $\kk$ is a \emph{non-degenerate} $n$-cell of $T1$.
In other words, a cell in $T1$ is degenerate if and only if it is an identity cell in the free strict $\omega$-category over $1$.

For each $0\leq m<n$, the source and target maps $s^{T1}_m\colon (T1)_n\to(T1)_m$ and $t^{T1}_m\colon (T1)_n\to(T1)_m$ of the globular set $T1$ are equal, and are defined as follows.
Given a pasting scheme $\kk$ of dimension $n$ and of rank $r$, and an integer $m$ with $0\leq m<n$, an \emph{$m$-transversal component}  of $\kk$ is a subsequence of $\kk$ of the form 
    \begin{equation}
    \label{eqn:m-triv}
    \begin{bmatrix}
    k_{i} & & k_{i+1} & & \dots & & k_{j}\\
    & \underline k_{i+1} & & \underline k_{i+2} &\dots & \underline k_{j} &
    \end{bmatrix}
    \end{equation}
with $0 \le i \le j \le r$ such that
\begin{itemize}
    \item $k_{{l}}>m$ for all $i\le {{l}}\le j$,
    \item $\underline k_{{l}} \geq m$ for all $i+1 \leq {{l}} \le j$,
    \item either $i = 0$ or $\underline k_{i} < m$, and
    \item either $j = r$ or $\underline k_{j+1} < m$.
\end{itemize}
(The first clause is not completely redundant because of the case $i=j$.)
We also use the phrase ``$m$-transversal component $0\leq i\leq j\leq r$ of $\kk$'' to refer to the subsequence \cref{eqn:m-triv}.
For $\kk\in (T1)_n$, the pasting scheme $s^{T1}_m(\kk)=t^{T1}_m(\kk)\in(T1)_m$ is obtained from $\kk$ by replacing each of its $m$-transversal components with the sequence 
    \[
    \begin{bmatrix}
    m \\
    \ 
    \end{bmatrix}.
    \]
For example, we have
\[
s^{T1}_4\left(
\begin{bmatrix}
    3 & & 6 & & 5 & & 7 & & 2 & & 6\\
    & 2 & & 3 & & 4 & & 0 & & 1 &
\end{bmatrix} \right)= 
\begin{bmatrix}
    3 & & 4 & & 4 & & 2 & & 4 \\
    & 2 & & 3 & & 0 & & 1 & 
\end{bmatrix}.
\]

For any globular set $X$ and $n\geq 0$, the set $(TX)_n$ can be identified with the set of all pasting diagrams in $X$ of dimension $n$ by \cite[Proposition~F.2.3]{Leinster_book}. For each $0\leq m<n$, the source map $s^{TX}_m\colon (TX)_n\to (TX)_m$ is defined as follows. Let 
    \[
    \uu = \begin{bmatrix}
    u_0 & & u_1 & & \dots & & u_r\\
    & \underline u_1 & & \underline u_1 & \dots & \underline u_r &
    \end{bmatrix}
    \]
be an element of $(TX)_n$, with underlying pasting scheme $\kk$. Then $s^{TX}_m(\uu)\in(TX)_m$ is obtained from $\uu$ by, for each of the $m$-transversal components $0\leq i\leq j\leq r$ of $\kk$, replacing the corresponding subsequence 
    \begin{equation}
    \label{eqn:m-transversal-u-x}
    \begin{bmatrix}
    u_{i} & & u_{i+1} & & \dots & & u_{j}\\
    & \underline u_{i+1} & & \underline u_{i+2} & \dots & \underline u_{j} &
    \end{bmatrix}
    \end{equation}
of $\uu$ with the sequence 
    \[
    \begin{bmatrix}
    s^X_m(u_{i})\\
    \ 
    \end{bmatrix}.
    \]
Similarly, $t^{TX}_m(\uu)\in(TX)_m$ is obtained from $\uu$ by replacing each instance of \cref{eqn:m-transversal-u-x} with 
    \[
    \begin{bmatrix}
    t^X_m(u_{j})\\
    \ 
    \end{bmatrix}.
    \]
For example, given a $2$-dimensional pasting diagram 
\[
\uu=
\begin{bmatrix}
    \alpha & & h & & \beta & & \gamma\\
    & b & & c & & j & 
\end{bmatrix}
\]
in $X$, which may be depicted as 
\[
\begin{tikzpicture}[baseline=-\the\dimexpr\fontdimen22\textfont2\relax ]
      \node(20) at (0,0) {$a$};
      \node(21) at (2,0) {$b$};
      \node(22) at (4,0) {$c$};
      \node(23) at (6,0) {$d$,};

      \draw [->,bend left=30]  (20) to node[auto, labelsize] {$f$} (21);
      \draw [->,bend right=30] (20) to node[auto, swap,labelsize] {$g$} (21); 
      \draw [->]  (21) to node[auto, labelsize] {$h$} (22);
      \draw [->,bend left=60]  (22) to node[auto, labelsize] {$i$} (23);
      \draw [->]  (22) to node[midway,fill=white,labelsize] {$j$} (23);
      \draw [->,bend right=60]  (22) to node[auto, swap,labelsize] {$k$} (23);

      \draw [->]  (1,0.25) to node[auto,labelsize] {$\alpha$} (1,-0.25);
      \draw [->]  (5,0.58) to node[auto,labelsize] {$\beta$} (5,0.18);
      \draw [->]  (5,-0.18) to node[auto,labelsize] {$\gamma$} (5,-0.58);
\end{tikzpicture}
\]
we have 
\begin{align*}
s^{TX}_1(\uu)&=
\begin{bmatrix}
    s^X_1(\alpha) & & h & & s^X_1(\beta)\\
    & b & & c &  
\end{bmatrix}=
\begin{bmatrix}
    f & & h & & i\\
    & b & & c &  
\end{bmatrix},
\end{align*}
which may be depicted as
\[
\begin{tikzpicture}[baseline=-\the\dimexpr\fontdimen22\textfont2\relax ]
      \node(20) at (0,0) {$a$};
      \node(21) at (2,0) {$b$};
      \node(22) at (4,0) {$c$};
      \node(23) at (6,0) {$d$,};

      \draw [->]  (20) to node[auto, labelsize] {$f$} (21);
      \draw [->]  (21) to node[auto, labelsize] {$h$} (22);
      \draw [->]  (22) to node[auto, labelsize] {$i$} (23);
\end{tikzpicture}
\]
and 
\begin{align*}
t^{TX}_1(\uu)&=
\begin{bmatrix}
    t^X_1(\alpha) & & h & & t^X_1(\gamma)\\
    & b & & c &  
\end{bmatrix}=
\begin{bmatrix}
    g & & h & & k\\
    & b & & c &  
\end{bmatrix},
\end{align*}
which may be depicted as
\[
\begin{tikzpicture}[baseline=-\the\dimexpr\fontdimen22\textfont2\relax ]
      \node(20) at (0,0) {$a$};
      \node(21) at (2,0) {$b$};
      \node(22) at (4,0) {$c$};
      \node(23) at (6,0) {$d$,};

      \draw [->]  (20) to node[auto, labelsize] {$g$} (21);
      \draw [->]  (21) to node[auto, labelsize] {$h$} (22);
      \draw [->]  (22) to node[auto, labelsize] {$k$} (23);
\end{tikzpicture}
\]
as expected.
    
The action of $T$ on morphisms of globular sets is straightforward: if $f\colon X\to Y$ is a morphism of globular sets, then $Tf\colon TX\to TY$ is a morphism of globular sets which maps each pasting diagram 
    \[
    \uu = \begin{bmatrix}
    u_0 & & u_1 & & \dots & & u_r\\
    & \underline u_1 & & \underline u_2 & \dots & \underline u_r &
    \end{bmatrix}
    \]
in $X$ to the pasting diagram 
    \[
    (Tf)(\uu) = \begin{bmatrix}
    fu_0 & & fu_1 & & \dots & & fu_r\\
    & f\underline u_1 & & f\underline u_2 & \dots & f\underline u_r &
    \end{bmatrix}
    \]
in $Y$. In particular, for the unique morphism $!\colon X\to 1$ to the terminal globular set $1$, the induced morphism $T!\colon TX\to T1$ maps each pasting diagram $\uu$ in $X$ to its shape $\kk$.

The unit $\eta^T_X\colon X\to TX$ maps each $n$-cell $x$ of $X$ to the $n$-cell 
    \[
    \begin{bmatrix}
    x\\
    \ 
    \end{bmatrix}
    \]
of $TX$, which we write as $[x]$. In particular, $\eta^T_1\colon 1\to T1$ maps the unique $n$-cell of $1$ to the $n$-cell 
\begin{equation*}
    \begin{bmatrix}
    n\\
    \ 
    \end{bmatrix}
\end{equation*}
of $T1$, which we write as $[n]$. We have $[n]\colon [n-1]\to [n-1]$ for each $n\geq 1$.

A cell of $T^2X$ is a pasting diagram in $TX$, namely a table of pasting diagrams in $X$, such as 
\[
\overline{\uu}=
    \begin{bmatrix}
    \uu_0 & &\uu_1 & &  \dots & & \uu_r\\
    & \underline \uu_1 & & \underline \uu_2 & \dots & \underline \uu_r &
    \end{bmatrix}.
\]
The multiplication $\mu^T_X\colon T^2X\to TX$ maps $\overline{\uu}$ to a single pasting diagram $\mu^T_X(\overline{\uu})$ in $X$, which is obtained by suitably ``gluing together'' the pasting diagrams $    \uu_i$ along $\underline \uu_i$. 

An Eilenberg--Moore algebra of the monad $T$ consists of a globular set $X$ together with a structure map $\gamma\colon TX\to X$ satisfying the usual axioms. $\gamma$ maps each $n$-dimensional pasting diagram $\uu$ in $X$ to an $n$-cell $\gamma(\uu)$ of $X$, namely the (pasting) composite of $\uu$. 
It captures the notion of strict $\omega$-category by requiring that each pasting diagram in it should admit a unique composite.

\subsection{The idea of weak \texorpdfstring{$\omega$}{omega}-category}
\label{subsec:idea_of_weak_omega_cat}
In this subsection, we shall give a heuristic discussion that motivates the definition of the monad $L$ (on $\GSet$) for weak $\omega$-categories; an actual definition is carried out in the next subsection. 
First note that, whatever we define $L$ to be, there should be a monad morphism $\ar^{L} \colon L \to T$ since we would want strict $\omega$-categories to be a special case of weak $\omega$-categories.
(Here the notation $\ar^{L}$ stands for \emph{arity}, and this terminology will be justified below where we consider its component $\ar^{L}_1$ at the terminal globular set $1$.)

Suppose that a globular set $X$ contains a sequence of $1$-cells:
\[
\begin{tikzpicture}[baseline=-\the\dimexpr\fontdimen22\textfont2\relax ]
	\node(20) at (0,0) {$a$};
	\node(21) at (2,0) {$b$};
	\node(22) at (4,0) {$c$};
	\node(23) at (6,0) {$d$.};
	
	\draw [->]  (20) to node[auto, labelsize] {$f$} (21);
	\draw [->]  (21) to node[auto, labelsize] {$g$} (22);
	\draw [->]  (22) to node[auto, labelsize] {$h$} (23);
\end{tikzpicture}
\]
In contrast to the strict case where we would have a unique composite $hgf$ in $TX$, the free weak $\omega$-category $LX$ should contain \emph{distinct} $1$-cells $(hg)f$ and $h(gf)$ (and also other composites with identities inserted in various positions).
Motivated by such examples, we claim that $(LX)_n$ should consist of the \emph{pasting instructions} $(\phi,\uu)$ of dimension $n$ in $X$, by which we mean a pasting diagram $\uu\in (TX)_n$ of dimension $n$ in $X$ together with an additional piece of data $\phi$ that encodes \emph{how} to compose the pasting diagram $\uu$. 
Since it should be the \emph{shape} of the pasting diagram $\uu$ (rather than the labels $u_i$ and $\underline u_i$) that determines what $\phi$ can be, we may simply take $\phi$ to be an element of $(L1)_n$ with appropriate arity; that is, we want the globular set $LX$ to be the pullback 
\begin{equation}
	\label{eqn:ar-L-cartesian}
	\begin{tikzpicture}[baseline=-\the\dimexpr\fontdimen22\textfont2\relax ]
		\node(00) at (0,1) {$LX$};
		\node(01) at (2,1) {$L1$};
		\node(10) at (0,-1) {$TX$};
		\node(11) at (2,-1) {$T1$,};
		
		\draw [->] (00) to node[auto, labelsize] {$L!$} (01); 
		\draw [->] (01) to node[auto, labelsize] {$\ar^{L}_1$} (11); 
		\draw [->] (00) to node[auto,swap,labelsize] {$\ar^{L}_X$} (10); 
		\draw [->] (10) to node[auto,swap,labelsize] {$T!$} (11);
 \end{tikzpicture}
\end{equation}
where $\ar^{L}\colon L\to T$ is the monad morphism mentioned earlier.
{We also call elements $\phi$ of $(L1)_n$ \emph{pasting instructions}.}

Thus, a major part of the definition of the monad $L$ is reduced to describing what $L1$ is.
Recall that, in the previous subsection, we saw a description of the strict $\omega$-categories as those globular sets equipped with a \emph{unique} composite for each pasting diagram.
The structure of $L1$ should somehow encode a weak version of this, so we want each pasting diagram to have \emph{some} composite, which should moreover be unique up to suitably invertible higher cell.
To make this precise, we draw insight from the \emph{pasting theorem} in the $2$-dimensional case.

The pasting theorem for \emph{strict} $2$-categories \cite{Power_2-cat_pasting} tells us that a pasting diagram such as
\begin{equation*}
	\begin{tikzpicture}[baseline=-\the\dimexpr\fontdimen22\textfont2\relax ]
		\node(20) at (0,0) {$a$};
		\node(21) at (2,0) {$b$};
		\node(22) at (4,0) {$c$};
		\node(23) at (6,0) {$d$};
		
		\draw [->,bend left=30]  (20) to node[auto, labelsize] {$f$} (21);
		\draw [->,bend right=30] (20) to node[auto, swap,labelsize] {$g$} (21); 
		\draw [->]  (21) to node[auto, labelsize] {$h$} (22);
		\draw [->,bend left=60]  (22) to node[auto, labelsize] {$i$} (23);
		\draw [->]  (22) to node[midway,fill=white,labelsize] {$j$} (23);
		\draw [->,bend right=60]  (22) to node[auto, swap,labelsize] {$k$} (23);
		
		\draw [->]  (1,0.25) to node[auto,labelsize] {$\alpha$} (1,-0.25);
		\draw [->]  (5,0.58) to node[auto,labelsize] {$\beta$} (5,0.18);
		\draw [->]  (5,-0.18) to node[auto,labelsize] {$\gamma$} (5,-0.58);
	\end{tikzpicture}
\end{equation*}
(or much more complicated ones) admits a unique composite $2$-cell $ihf \to khg$.
In the \emph{weak} case of bicategories \cite[Appendix~A]{Verity-thesis}, however, we must first specify how to interpret ``$ihf$'' and ``$khg$''; once we have fixed the bracketing on these $1$-cells, we obtain a unique composite $2$-cell.
When defining $L$, we can follow the same pattern at least for the existence part: \emph{given a pasting scheme of dimension $n$ together with pasting instructions on its $(n-1)$-dimensional source and target, it must be possible to extend them to a pasting instruction on the whole pasting scheme.}

More precisely, we ask that $\ar^{L}_1 \colon L1 \to T1$ have the right lifting property with respect to the boundary inclusion $\iota_n\colon\partial\OO^n\to\OO^n$ for each $n\geq 1$ (cf.~\cite{Garner_univ}). 
This means that given any $u$ and $v$ making the outer square in the following diagram commutative, there exists a (not necessarily unique) morphism $w$ making both triangles in the diagram commutative: 
\begin{equation*}
	\begin{tikzpicture}[baseline=-\the\dimexpr\fontdimen22\textfont2\relax ]
		\node(00) at (0,1) {$\partial\OO^n$};
		\node(01) at (2,1) {$L1$};
		\node(10) at (0,-1) {$\OO^n$};
		\node(11) at (2,-1) {$T1$.};
		
		\draw [->] (00) to node[auto, labelsize] {$u$} (01); 
		\draw [->] (01) to node[auto, labelsize] {$\ar^{L}_1$} (11); 
		\draw [->] (00) to node[auto,swap,labelsize] {$\iota_n$} (10); 
		\draw [->] (10) to node[auto,swap,labelsize] {$v$} (11);   
		\draw [->] (10) to node[midway,fill=white,labelsize] {$w$} (01);
	\end{tikzpicture}
\end{equation*}

Note that, thanks to the pullback condition \cref{eqn:ar-L-cartesian}, the component $\ar^{L}_X$ at any globular set $X$ inherits the right lifting property from $\ar^{L}_1$; given a commutative square as below left, we can first find a lift of the outer square using the right lifting property of $\ar^{L}_1$, and then use the universal property of the pullback to obtain the desired lift:
\begin{equation*}
	\begin{tikzpicture}[baseline=-\the\dimexpr\fontdimen22\textfont2\relax ]
		\node(00) at (0,1) {$\partial\OO^n$};
		\node(01) at (2,1) {$LX$};
		\node(02) at (4,1) {$L1$};
		\node(10) at (0,-1) {$\OO^n$};
		\node(11) at (2,-1) {$TX$};
		\node(12) at (4,-1) {$T1$.};
		
		\draw [->] (00) to node[auto, labelsize] {$u$} (01); 
		\draw [->] (01) to node[auto,near end, labelsize] {$\ar^{L}_X$} (11); 
		\draw [->] (02) to node[auto, labelsize] {$\ar^{L}_1$} (12); 
		\draw [->] (00) to node[auto,swap,labelsize] {$\iota_n$} (10); 
		\draw [->] (10) to node[auto,swap,labelsize] {$v$} (11);   
		\draw [->, dashed] (10) to (01);
		\draw [->, dashed] (10) to (02);
		\draw [->] (01) to node[auto, labelsize] {$L!$} (02);
		\draw [->] (11) to node[auto, labelsize, swap] {$T!$} (12);
	\end{tikzpicture}
\end{equation*}

Thus we have taken care of the existence part of the pasting theorem, and in fact formalising these ideas (combined with a suitable universal property) essentially leads to the precise definition given in the next subsection.
The uniqueness part turns out to be a theorem (see \cref{cor:coherence} and \cref{rmk:contractibility}) rather than part of the definition.

\subsection{The definition of weak \texorpdfstring{$\omega$}{omega}-category}
A natural transformation is \emph{cartesian} if each of its naturality squares is cartesian (i.e., a pullback square). A monad $(S,\eta^S,\mu^S)$ on a category with pullbacks is \emph{cartesian} if its functor part $S$ preserves all pullbacks and its unit $\eta^S$ and multiplication $\mu^S$ are cartesian natural transformations. 
If $S$ and $S'$ are monads on a category with pullbacks, then a monad morphism $\alpha\colon S\to S'$ is \emph{cartesian} if it is cartesian as a natural transformation. 
Notice that if $\cat{E}$ is a category with pullbacks, $S$ is a monad on $\cat{E}$, $S'$ is a cartesian monad on $\cat{E}$, and $\alpha\colon S\to S'$ is a cartesian monad morphism, then $S$ is necessarily a cartesian monad.

It is known that the monad $T$ on $\GSet$ for strict $\omega$-categories is cartesian \cite[Theorem~F.2.2]{Leinster_book}.
A \emph{cartesian monad over $T$} is a (necessarily cartesian) monad $P=(P,\eta^P,\mu^P)$ on $\GSet$ equipped with a cartesian monad morphism $\ar^{P}\colon P\to T$. 
A \emph{cartesian monad over $T$ with contraction} is a cartesian monad $(P,\ar^{P})$ over $T$ equipped with a function $\kappa^P$ which assigns for each $n\geq 1$, $u\colon \partial \OO^n\to P1$, and $v\colon \OO^n\to T1$ such that ${\ar^{P}_1}\circ u=v\circ \iota_n$, a morphism $\kappa^P(u,v)\colon\OO^n\to P1$ such that $u=\kappa^P(u,v)\circ \iota_n$ and $v={\ar^{P}_1}\circ \kappa^P(u,v)$:
\begin{equation*}
\begin{tikzpicture}[baseline=-\the\dimexpr\fontdimen22\textfont2\relax ]
      \node(00) at (0,1) {$\partial\OO^n$};
      \node(01) at (2,1) {$P1$};
      \node(10) at (0,-1) {$\OO^n$};
      \node(11) at (2,-1) {$T1$.};
      
      \draw [->] (00) to node[auto, labelsize] {$u$} (01); 
      \draw [->] (01) to node[auto, labelsize] {$\ar^{P}_1$} (11); 
      \draw [->] (00) to node[auto,swap,labelsize] {$\iota_n$} (10); 
      \draw [->] (10) to node[auto,swap,labelsize] {$v$} (11);   
      \draw [->] (10) to node[midway,fill=white,labelsize] {$\kappa^P(u,v)$} (01);
\end{tikzpicture}
\end{equation*}
      

Now define the category $\cat{C}$ of cartesian monads over $T$ with contraction as follows. An object of $\cat{C}$ is a cartesian monad over $T$ with contraction $(P,\ar^{P},\kappa^P)$, and a morphism $(P,\ar^{P},\kappa^P)\to (Q,\ar^{Q},\kappa^Q)$ in $\cat{C}$ is a (necessarily cartesian) monad morphism $\alpha\colon P\to Q$ such that $\ar^{P}={\ar^{Q}}\circ \alpha$ and that preserves the contractions, in the sense that following diagram commutes (i.e., $\alpha_1\circ \kappa^P(u,v)=\kappa^Q(\alpha_1\circ u,v)$ holds) for each $n\geq 1$, $u\colon \partial \OO^n\to P1$, and $v\colon \OO^n\to T1$ such that ${\ar^{P}_1}\circ u=v\circ \iota_n$:
\begin{equation*}
\begin{tikzpicture}[baseline=-\the\dimexpr\fontdimen22\textfont2\relax ]
      \node(00) at (0,2) {$\partial \OO^n$};
      \node(01) at (2,2) {$P1$};
      \node(10) at (0,0) {$\OO^n$};
      \node(11) at (2,0) {$T1$};
      \node(02) at (5,2) {$Q1$};
      \node(12) at (5,0) {$T1$.};
      
      \draw [->] (00) to node[auto, labelsize] {$u$} (01); 
      \draw [->] (01) to node[auto, labelsize] {$\alpha_1$} (02); 
      \draw [->] (01) to node[auto,near start, labelsize] {$\ar^{P}_1$} (11); 
      \draw [->] (02) to node[auto, labelsize] {$\ar^{Q}_1$} (12); 
      \draw [->] (11) to node[auto,swap,labelsize] {$\mathrm{id}$} (12);  
      \draw [->] (00) to node[auto,swap,labelsize] {$\iota_n$} (10); 
      \draw [->] (10) to node[auto,swap,labelsize] {$v$} (11);   
      \draw [->] (10) to node[midway,fill=white,labelsize] {$\kappa^P(u,v)$} (01);
      \draw [->] (10) to node[near end,fill=white,labelsize] {$\kappa^Q(\alpha_1\circ u,v)$} (02);
\end{tikzpicture}\qedhere
\end{equation*}

Using the alternative description of cartesian monads over $T$ as \emph{$T$-operads}, one can show that $\cat{C}$ has an initial object $(L,\ar^{L},\kappa^L)=(L,\eta^L,\mu^L,\ar^{L},\kappa^L)$ \cite[Corollary~G.1.2]{Leinster_book}. 

\begin{definition}[{\cite{Leinster_book}}]
    A \emph{weak $\omega$-category} is an Eilenberg--Moore algebra  $(X,\xi\colon LX\to X)$ of the monad $L=(L,\eta^L,\mu^L)$.
\end{definition}

We define the category $\WkCats{\omega}$ of weak $\omega$-categories and \emph{strict $\omega$-functors} between them as the Eilenberg--Moore category of the monad $L$. (See \cref{rmk:weak-omega-functors} for the category $\WkCat{\omega}$ of weak $\omega$-categories and \emph{weak $\omega$-functors}.)

\begin{remark}
    Using the fact that $T$ is a cartesian monad, one can see that the entire structure of a cartesian monad over $T$ with contraction $(P,\eta^P,\mu^P,\ar^{P},\kappa^P)$ is determined by the tuple $(P1,\eta^P_1,\mu^P_1,\ar^{P}_1,\kappa^P)$ of its components at the terminal globular set 1. 
    This latter description leads to the notion of $T$-operad with contraction.
\end{remark}

\subsection{The free weak \texorpdfstring{$\omega$}{omega}-category monad \texorpdfstring{$L$}{L}}
For later reference, we shall describe the structure of the monad $L$ in some detail. 
From now on, we write $\ar^L_X$ as $\ar_X$, $\ar^L_1$ as $\ar$, and $\kappa^L$ as $\kappa$.

Given a globular set $X$, the globular set $LX$ is obtained as the pullback
\begin{equation*}
\begin{tikzpicture}[baseline=-\the\dimexpr\fontdimen22\textfont2\relax ]
      \node(00) at (-0.5,1) {$LX$};
      \node(01) at (1.5,1) {$L1$};
      \node(10) at (-0.5,-1) {$TX$};
      \node(11) at (1.5,-1) {$T1$.};
      
      \draw [->] (00) to node[auto, labelsize] {$L!$} (01); 
      \draw [->] (01) to node[auto, labelsize] {$\ar$} (11); 
      \draw [->] (00) to node[auto,swap,labelsize] {$\ar_X$} (10); 
      \draw [->] (10) to node[auto,swap,labelsize] {$T!$} (11); 
\end{tikzpicture}
\end{equation*}
Thus an $n$-cell of $LX$ is a pair $(\phi,\uu)$ consisting of $\phi\in (L1)_n$ and $\uu\in (TX)_n$ such that $\ar(\phi)=(T!)(\uu)$ (that is, $\uu$ is a pasting diagram in $X$ of shape $\ar(\phi)$; we call $\ar(\phi)$ the \emph{arity} of $\phi$).
We have $s^{LX}_m(\phi,\uu)=\bigl(s^{L1}_m(\phi),s^{TX}_m(\uu)\bigr)$ and $t^{LX}_m(\phi,\uu)=\bigl(t^{L1}_m(\phi),t^{TX}_m(\uu)\bigr)$ for each $0\leq m < n$.

Given a morphism $f\colon X\to Y$ of globular sets, the morphism $Lf\colon LX\to LY$ maps an $n$-cell $(\phi,\uu)$ of $LX$ to the $n$-cell $\bigl(\phi, (Tf)(\uu)\bigr)$ of $LY$.

For each $n\geq 0$, we denote the image of the unique $n$-cell of $1$ under the unit $\eta^L_1\colon 1\to L1$ of $L$ by $\widetilde{e}_n\in(L1)_n$.
Note that $\ar(\widetilde{e}_n)=[n]$.
For any globular set $X$, $\eta^L_X\colon X\to LX$ maps each $n$-cell $x\in X_n$ to the $n$-cell $(\widetilde{e}_n,[x])$ of $LX$.

Next we describe the multiplication $\mu^L$.
An $n$-cell of $L^21$ is a tuple $(\phi,\cchi)$, where $\phi\in (L1)_n$ and $\cchi\in(TL1)_n$ is a pasting diagram in $L1$ of shape $\ar(\phi)$.
$\mu^L_1\colon L^21\to L1$ maps $(\phi,\cchi)\in (L^21)_n$ to some $n$-cell  $\mu^L_1(\phi,\cchi)\in(L1)_n$.
For any globular set $X$, 
an $n$-cell of $L^2X$ is a tuple $(\phi,\widetilde{\uu})$, where $\phi\in(L1)_n$ and $\widetilde{\uu}\in(TLX)_n$ is a pasting diagram in $LX$ of shape $\ar(\phi)$. 
Using the above description of cells in $LX$, we may write $\widetilde{\uu}$ as  
\[
\widetilde{\uu}=
    \begin{bmatrix}
    (\chi_0,\uu_0) & &  \dots & & (\chi_r,\uu_r)\\
    & (\underline \chi_1,\underline \uu_1) & \dots & (\underline \chi_r,\underline \uu_r) &
    \end{bmatrix}.
\]
Thus $\widetilde{\uu}$ can be decomposed into
\[
\cchi=
    \begin{bmatrix}
    \chi_0 & &  \dots & & \chi_r\\
    & \underline \chi_1 & \dots & \underline \chi_r &
    \end{bmatrix}
    \in(TL1)_n
\]
and 
\[
\overline{\uu}=
    \begin{bmatrix}
    \uu_0 & & \dots & & \uu_r\\
    & \underline \uu_1 & \dots & \underline \uu_r &
    \end{bmatrix}
    \in(T^2X)_n.
\]
The morphism $\mu^L_X\colon L^2X\to LX$ maps $(\phi,\widetilde{\uu})\in(L^2X)_n$ as above to $\bigl(\mu^L_1(\phi,\cchi),\mu^T_X(\overline{\uu})\bigr)\in (LX)_n$.
(In fact, the above description applies to an arbitrary cartesian monad $P$ over $T$ in place of $L$.)

We now use the contraction~$\kappa$.
For any $n\geq 1$, parallel pair of $(n-1)$-cells $\phi,\phi'\in(L1)_{n-1}$ (inducing the morphism $\langle\phi,\phi'\rangle\colon \partial G^n\to L1$), and $n$-cell $\kk\in (T1)_n$ such that $\ar(\phi)=s^{T1}_{n-1}(\kk)=t^{T1}_{n-1}(\kk)=\ar(\phi')$, we have an $n$-cell $\kappa(\langle\phi,\phi'\rangle,\kk)\in (L1)_n$ such that $\kappa(\langle\phi,\phi'\rangle,\kk)\colon \phi\to  \phi'$ and $\ar\bigl(\kappa(\langle\phi,\phi'\rangle,\kk)\bigr)=\kk$:
\begin{equation*}
\begin{tikzpicture}[baseline=-\the\dimexpr\fontdimen22\textfont2\relax ]
      \node(00) at (0,1) {$\partial\OO^n$};
      \node(01) at (2,1) {$L1$};
      \node(10) at (0,-1) {$\OO^n$};
      \node(11) at (2,-1) {$T1$.};
      
      \draw [->] (00) to node[auto, labelsize] {$\ppair{\phi,\phi'}$} (01); 
      \draw [->] (01) to node[auto, labelsize] {$\ar$} (11); 
      \draw [->] (00) to node[auto,swap,labelsize] {$\iota_n$} (10); 
      \draw [->] (10) to node[auto,swap,labelsize] {$\kk$} (11);   
      \draw [->] (10) to node[midway,fill=white,labelsize] {$\kappa(\ppair{\phi,\phi'},\kk)$} (01);
\end{tikzpicture}
\end{equation*}

\begin{definition}
    For any $n\geq 0$ and $\kk\in(T1)_n$, we define $\spi(\kk)\in(L1)_n$ with $\ar(\spi(\kk))=\kk$ inductively (on $n$) as follows. 
    \begin{itemize}
        \item If $\kk=[n]\in (T1)_n$, then $\spi([n])=\widetilde{e}_n$.
        \item Otherwise, let $\kk\colon \kkd\to \kkd$, where $\kkd\in(T1)_{n-1}$. (Note that necessarily $n\geq 1$.) We define $\spi(\kk)=\kappa\bigl(\ppair{\spi(\kkd),\spi(\kkd)},\kk\bigr)$.
    \end{itemize}
    $\spi(\kk)$ is called the \emph{standard pasting instruction} of shape $\kk$. 
    $\spi\colon T1\to L1$ is a globular map which is a section of $\ar$, commuting with $\eta^T_1$ and $\eta^L_1$.
\end{definition}

Note that for any weak $\omega$-category $(X,\xi)$, any $n\geq 0$, and any $n$-dimensional pasting scheme $\kk\in (T1)_n$, we have the \emph{standard pasting operation of arity $\kk$} in $X$, mapping each pasting diagram $\uu\in (TX)_n$ in $X$ of shape $\kk$ to the $n$-cell 
$\xi\bigl(\spi(\kk),\uu\bigr)$ of $X$.
Moreover, any strict $\omega$-functor $f\colon (X,\xi)\to (Y,\nu)$ between weak $\omega$-categories preserves the standard pasting operations: for any pasting diagram $\uu$ in $X$ of shape $\kk$, we have 
\begin{align*}
        f\bigl(\xi\bigl(\spi(\kk),\uu\bigr)\bigr)
        = \nu\bigl((Lf)\bigl(\spi(\kk),\uu\bigr)\bigr)
        = \nu\bigl(\spi(\kk),(Tf)(\uu)\bigr).
\end{align*}

We introduce a notation (extending the corresponding notation for strict $\omega$-categories) for certain standard pasting operations we shall use in the sequel.

\begin{definition}
\label{def:id-and-comp-in-weak-omega-cat}
    Let $(X,\xi)$ be a weak $\omega$-category.
    \begin{enumerate}
        \item Given a natural number $n\geq 1$ and an $(n-1)$-cell $x$ of $X$, we define the $n$-cell 
    \[
    \id{n}{X}{x}=
    \xi\bigl( \spi([n-1]^{(n)}),[x]\bigr)
    \]
    of $X$.
        \item Given a natural number $n\geq 1$ and $n$-cells $u,v$ of $X$ such that $t_{n-1}^X(u)=s_{n-1}^X(v)$, we define the $n$-cell 
            \[
    u\comp{n-1}{X}v=
    \xi\biggl( \spi\biggl({\begin{bmatrix}
    n & & n \\
    & n-1 &
    \end{bmatrix}^{(n)}}\biggr),    \begin{bmatrix}
    u & & v \\
    & t^X_{n-1}(u)=s^X_{n-1}(v) &
    \end{bmatrix}\biggr)
    \]
    of $X$.
    \end{enumerate}
    When $X$ is clear from the context, we omit the superscript.
\end{definition}

These operations satisfy the following source and target formulas.

\begin{proposition}
\label{prop:s-and-t-for-id-and-ast}
    Let $(X,\xi)$ be a weak $\omega$-category.
    \begin{enumerate}
        \item Let $n\geq 1$ be a natural number and $x$ an $(n-1)$-cell of $X$.  Then we have 
             \[
             s_{n-1}(\id{n}{}{x})=x=t_{n-1}(\id{n}{}{x}).
             \]
        \item Let $n\geq 1$ be a natural number and $u,v$ $n$-cells of $X$ such that $t_{n-1}(u)=s_{n-1}(v)$ holds. Then we have 
        \[
             s_{n-1}(u\comp{n-1}{}v)=s_{n-1}(u)\quad\text{and}\quad
             t_{n-1}(u\comp{n-1}{}v)=t_{n-1}(v).
             \]
    \end{enumerate}
\end{proposition}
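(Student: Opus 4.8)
The plan is to unwind both operations down to the unit axiom of the Eilenberg--Moore algebra $(X,\xi)$. The ingredients I would use are: the structure map $\xi\colon LX\to X$ is a morphism of globular sets, so it commutes with all source and target maps; the source/target maps of $LX$ are computed componentwise, $s^{LX}_m(\phi,\uu)=\bigl(s^{L1}_m(\phi),s^{TX}_m(\uu)\bigr)$ and dually for $t$; the section $\spi\colon T1\to L1$ is a globular map; the transversal-component descriptions of $s^{T1},t^{T1},s^{TX},t^{TX}$; and finally the unit axiom $\xi\circ\eta^L_X=\mathrm{id}_X$ together with the formula $\eta^L_X(y)=(\widetilde{e}_m,[y])$ for $y\in X_m$.

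For part (1), I would apply $s^X_{n-1}$ to $\id{n}{}{x}=\xi\bigl(\spi([n-1]^{(n)}),[x]\bigr)$; by globularity of $\xi$ and the componentwise formula this equals $\xi\bigl(s^{L1}_{n-1}\spi([n-1]^{(n)}),\,s^{TX}_{n-1}[x]\bigr)$. The pasting scheme $[n-1]^{(n)}$ has rank $0$ and its unique entry is $n-1$, hence has no $(n-1)$-transversal component, so $s^{T1}_{n-1}[n-1]^{(n)}=[n-1]^{(n-1)}$ and $s^{TX}_{n-1}[x]=[x]$ (now read in dimension $n-1$). Globularity of $\spi$ gives $s^{L1}_{n-1}\spi([n-1]^{(n)})=\spi([n-1]^{(n-1)})=\widetilde{e}_{n-1}$, so the expression becomes $\xi(\widetilde{e}_{n-1},[x])=\xi(\eta^L_X(x))=x$. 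The target computation is verbatim the same.

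For part (2), write $\kk=\begin{bmatrix}n & & n\\ & n-1 & \end{bmatrix}^{(n)}$ and $\uu=\begin{bmatrix}u & & v\\ & w & \end{bmatrix}$ with $w=t^X_{n-1}(u)=s^X_{n-1}(v)$, so that $u\comp{n-1}{}v=\xi(\spi(\kk),\uu)$. The same reasoning gives $s^X_{n-1}(u\comp{n-1}{}v)=\xi\bigl(s^{L1}_{n-1}\spi(\kk),\,s^{TX}_{n-1}\uu\bigr)$; this time the whole of $\kk$ is a single $(n-1)$-transversal component, so $s^{T1}_{n-1}\kk=[n-1]^{(n-1)}$, $s^{TX}_{n-1}\uu=[s^X_{n-1}(u)]$, and globularity of $\spi$ yields $s^{L1}_{n-1}\spi(\kk)=\widetilde{e}_{n-1}$. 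Hence $s^X_{n-1}(u\comp{n-1}{}v)=\xi(\eta^L_X(s^X_{n-1}(u)))=s^X_{n-1}(u)$; for the target one only replaces $s^{TX}_{n-1}\uu$ by $t^{TX}_{n-1}\uu=[t^X_{n-1}(v)]$, giving $t^X_{n-1}(u\comp{n-1}{}v)=t^X_{n-1}(v)$.

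I do not expect any serious obstacle: the argument is essentially formal once the bookkeeping is set up. The one place to be careful is correctly reading off the $(n-1)$-transversal components of the two pasting schemes $[n-1]^{(n)}$ and $\begin{bmatrix}n & & n\\ & n-1 & \end{bmatrix}^{(n)}$, and relatedly keeping track of the degenerate-versus-non-degenerate interpretations of $[x]$, $[n-1]$, and $\widetilde{e}_{n-1}$ as cells of various dimensions.
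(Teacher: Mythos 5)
Your proposal is correct and follows essentially the same route as the paper: apply $s_{n-1}$ and $t_{n-1}$, use that the algebra map $\xi$ and the section $\spi$ are globular maps together with the componentwise description of sources/targets in $LX$ and the $(n-1)$-transversal-component computation in $T1$ and $TX$, and finish with $\xi\circ\eta^L_X=\mathrm{id}_X$. The paper's proof is exactly this calculation (spelled out only for the first equation of (2)), and your reading of the transversal components of $[n-1]^{(n)}$ and of $\begin{bmatrix}n & & n\\ & n-1 & \end{bmatrix}^{(n)}$ is accurate.
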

\begin{proof}
These are all straightforward consequences of the description of the source and target operations of $TX$ in \cref{subsec:T}. For example, the first equation in (2) can be proved as follows:
\begin{align*}
s_{n-1}^X(u\comp{n-1}{X}v) &= s_{n-1}^X\biggl(
    \xi\biggl( \spi\biggl({\begin{bmatrix}
    n & & n \\
    & n-1 &
    \end{bmatrix}^{(n)}}\biggr),    \begin{bmatrix}
    u & & v \\
    & t_{n-1}^X(u)=s_{n-1}^X(v) &
    \end{bmatrix}\biggr)\biggr)\\
    &= 
    \xi\biggl( s_{n-1}^{L1}\biggl(\spi\biggl({\begin{bmatrix}
    n & & n \\
    & n-1 &
    \end{bmatrix}^{(n)}}\biggr)\biggr), s^{TX}_{n-1}\biggl(   \begin{bmatrix}
    u & & v \\
    & t_{n-1}^X(u)=s_{n-1}^X(v) &
    \end{bmatrix}\biggr)\biggr)\\
    &= 
    \xi\biggl( \spi\biggl({\begin{bmatrix}
    n-1 \\
    \ 
    \end{bmatrix}^{(n-1)}}\biggr),    \begin{bmatrix}
    s_{n-1}^X(u) \\
    \ 
    \end{bmatrix}\biggr)\\
    & = \xi\circ \eta^L_X\bigl(s^X_{n-1}(u)\bigr)\\
    & = s_{n-1}^X(u).\qedhere
\end{align*}
\end{proof}

\begin{remark}
\label{rmk:composition-op-general}
    We generalise \cref{def:id-and-comp-in-weak-omega-cat} and \cref{prop:s-and-t-for-id-and-ast} in \cite{Fujii-Hoshino-Maehara-2-out-of-3}  because we need to use more general compositions there.
\end{remark}

Note that the monad law $\mu^L\circ L\eta^L=1_L$ allows us to write $\spi([n-1]^{(n)})\in(L1)_n$ as $\id{n}{L1}{\widetilde{e}_{n-1}}$ and 
\[
\spi\biggl({\begin{bmatrix}
    n & & n \\
    & n-1 &
    \end{bmatrix}^{(n)}}\biggr)\in(L1)_n
\]
as $\widetilde{e}_{n}\comp{n-1}{L1}\widetilde{e}_{n}$ for each $n\ge 1$, where $L1=(L1,\mu^L_1)$ is the free weak $\omega$-category on the terminal globular set $1$. In the following, we shall mainly use these latter notations.

For later reference, we remark on the compatibility of standard pasting operations and the construction of the \emph{hom weak $\omega$-category} $X(x,y)$ of a weak $\omega$-category $X$ between objects $x,y\in X_0$ defined in \cite[Section~9.3]{Leinster_book} and \cite{Cottrell_Fujii_hom}.
The latter is captured by the forgetful functor 
\[
    \Us\colon
    \WkCats{\omega}\to\enGph{(\WkCats{\omega})}
\]
mapping each weak $\omega$-category $X$ to the $(\WkCats{\omega})$-enriched graph \cite{Wolff_V-graph} $\Us X$ consisting of the same objects as $X$ together with the hom weak $\omega$-categories of $X$. In order to describe the definition of $\Us$, let us first observe that there is a sequence of maps $\Sigma=\bigl(\Sigma_n\colon (T1)_n\to (T1)_{n+1}\bigr)_{n\in\mathbb{N}}$ mapping 
    \[
    \kk = \begin{bmatrix}
    k_0 & & \dots & & k_r\\
    & \underline k_1 & \dots & \underline k_r &
    \end{bmatrix}
    \]
    to 
    \[
    \Sigma_n(\kk) = \begin{bmatrix}
    k_0+1 & & \dots & & k_r+1\\
    & \underline k_1+1 & \dots & \underline k_r+1 &
    \end{bmatrix}.
    \]
We also have a sequence of maps $\widetilde{\Sigma}=\bigl(\widetilde{\Sigma}_n\colon (L1)_n\to (L1)_{n+1}\bigr)_{n\in\mathbb{N}}$ which is compatible with the structure of $L$ and $\Sigma$; see \cite[Section~3]{Cottrell_Fujii_hom} for details. 
For any globular set $X$ and objects $x,y\in X_0$, we define the globular set $X(x,y)$ by 
\[
\bigl(X(x,y)\bigr)_n = \bigl\{\,u \in X_{n+1}\,\big\vert\,s^X_0(u)=x,\ \ t^X_0(u)=y\,\bigr\}
\]
with the evident source and target maps. If $X$ has a weak $\omega$-category structure $\xi\colon LX\to X$, then so does $X(x,y)$, with the structure map $\xi_{x,y}\colon L\bigl(X(x,y)\bigr)\to X(x,y)$ given by $\xi_{x,y}(\phi,\uu)=\xi\bigl(\widetilde{\Sigma}_n(\phi),\Sigma^{X,x,y}_n(\uu)\bigr)$, where $\Sigma^{X,x,y}_n\colon \bigl(T\bigl(X(x,y)\bigr)\bigr)_n\to (TX)_{n+1}$ is the inclusion.
\begin{proposition}
    $\widetilde{\Sigma}$ preserves standard pasting instructions.
\end{proposition}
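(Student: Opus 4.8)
Unwinding the definitions, the assertion is that $\widetilde{\Sigma}_n\bigl(\spi(\kk)\bigr)=\spi\bigl(\Sigma_n(\kk)\bigr)$ for every $n\geq 0$ and every $\kk\in(T1)_n$ (indeed $\widetilde{\Sigma}_n(\spi(\kk))$ has arity $\Sigma_n(\kk)$, so it is a standard pasting instruction if and only if it equals $\spi(\Sigma_n(\kk))$). I would prove this by induction on $n$, following the recursive definition of $\spi$.

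First I would record some elementary facts. Each $\Sigma_n\colon(T1)_n\to(T1)_{n+1}$ is injective with $\Sigma_n([n]^{(n)})=[n+1]^{(n+1)}$, so $\Sigma_n(\kk)=[n+1]^{(n+1)}$ if and only if $\kk=[n]^{(n)}$; this determines which clause of the definition of $\spi$ governs the right-hand side. Moreover the $\Sigma_n$ commute with the (coinciding) source and target maps of $T1$, i.e.\ $s^{T1}_n\circ\Sigma_n=\Sigma_{n-1}\circ s^{T1}_{n-1}$ for $n\geq1$; this is a direct combinatorial check, since the $n$-transversal components of $\Sigma_n(\kk)$ are exactly the images under $\Sigma_n$ of the $(n-1)$-transversal components of $\kk$. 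Finally, I would invoke the compatibility of $\widetilde{\Sigma}$ with the structure of $L$ from \cite[Section~3]{Cottrell_Fujii_hom} in two specific forms: $\widetilde{\Sigma}_n(\widetilde{e}_n)=\widetilde{e}_{n+1}$, and
\[
\widetilde{\Sigma}_n\bigl(\kappa(\ppair{\phi,\phi'},\kk)\bigr)=\kappa\bigl(\ppair{\widetilde{\Sigma}_{n-1}(\phi),\widetilde{\Sigma}_{n-1}(\phi')},\Sigma_n(\kk)\bigr)
\]
for all $n\geq1$, all parallel $\phi,\phi'\in(L1)_{n-1}$, and all $\kk\in(T1)_n$ with $\ar(\phi)=s^{T1}_{n-1}(\kk)$.

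The induction then runs as follows. If $\kk=[n]^{(n)}$ (which in particular covers $n=0$), then $\widetilde{\Sigma}_n(\spi([n]^{(n)}))=\widetilde{\Sigma}_n(\widetilde{e}_n)=\widetilde{e}_{n+1}=\spi([n+1]^{(n+1)})=\spi(\Sigma_n([n]^{(n)}))$, with no appeal to the inductive hypothesis. If $\kk\neq[n]^{(n)}$, write $\kk\colon\kkd\to\kkd$ with $\kkd=s^{T1}_{n-1}(\kk)\in(T1)_{n-1}$, so that $\spi(\kk)=\kappa(\ppair{\spi(\kkd),\spi(\kkd)},\kk)$; by the recorded facts, $\Sigma_n(\kk)\neq[n+1]^{(n+1)}$ and $\Sigma_n(\kk)\colon\Sigma_{n-1}(\kkd)\to\Sigma_{n-1}(\kkd)$, hence $\spi(\Sigma_n(\kk))=\kappa(\ppair{\spi(\Sigma_{n-1}(\kkd)),\spi(\Sigma_{n-1}(\kkd))},\Sigma_n(\kk))$. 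Rewriting $\spi(\Sigma_{n-1}(\kkd))$ as $\widetilde{\Sigma}_{n-1}(\spi(\kkd))$ via the inductive hypothesis for $\kkd$, and then applying the compatibility of $\widetilde{\Sigma}$ with $\kappa$, turns this into $\widetilde{\Sigma}_n(\kappa(\ppair{\spi(\kkd),\spi(\kkd)},\kk))=\widetilde{\Sigma}_n(\spi(\kk))$, as required.

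I expect the only genuine obstacle to be the compatibility of $\widetilde{\Sigma}$ with the contraction $\kappa$ displayed above; the rest is bookkeeping around the two clauses of the recursion. The point to verify carefully is that this compatibility is really part of what \cite{Cottrell_Fujii_hom} establishes about $\widetilde{\Sigma}$ — it ought to be, since $\kappa$ is part of the data of a cartesian monad over $T$ with contraction and $\widetilde{\Sigma}$ is asserted there to respect that data — and, should it not be stated explicitly, to supply a short direct verification from the construction of $\widetilde{\Sigma}$ and of the chosen contraction on $L1$.
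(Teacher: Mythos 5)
Your proposal is correct and follows essentially the same route as the paper: the paper's proof cites exactly the two compatibility properties you isolate ($\widetilde{\Sigma}_n(\widetilde{e}_n)=\widetilde{e}_{n+1}$ and the compatibility of $\widetilde{\Sigma}$ with $\kappa$ and $\Sigma$, taken as part of what \cite[Section~3]{Cottrell_Fujii_hom} provides) and then appeals to the same straightforward induction on $n$ along the recursive definition of $\spi$, which you have merely written out in more detail.
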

\begin{proof}
    Among the compatibility of $\widetilde{\Sigma}$ with the structure of $L$ and $\Sigma$ are the properties that we have  $\widetilde{\Sigma}_n(\widetilde{e}_n)=\widetilde{e}_{n+1}$ for all $n\geq 0$, and that we have 
    \[\widetilde{\Sigma}_n\bigl(\kappa(\ppair{\phi,\phi'},\kk)\bigr)=\kappa\bigl(\ppair{\widetilde{\Sigma}_{n-1}(\phi),\widetilde{\Sigma}_{n-1}(\phi')},\Sigma_n(\kk)\bigr)\]
    for all $n\geq 1$, all parallel $\phi,\phi'\in (L1)_{n-1}$, and all $\kk\in(T1)_n$ with $\ar(\phi)=\ar(\phi')=s^{T1}_{n-1}(\kk)$.
    The claim follows from a straightforward induction on $n$.
\end{proof}

It follows that the standard pasting operation of arity $\kk\in(T1)_n$ on a hom weak $\omega$-category $X(x,y)$ is given by (a restriction of) the standard pasting operation of arity $\Sigma_n(\kk)\in(T1)_{n+1}$ on $X$. 
In particular, we shall use the following.

\begin{corollary}
\label{cor:composition-in-hom}
    Let $X$ be a weak $\omega$-category and $x,y\in X_0$. 
    \begin{enumerate}
        \item Given a natural number $n\geq 1$ and an $(n-1)$-cell $z$ of $X(x,y)$, we have
        \[
        \id{n}{X(x,y)}{z}=\id{n+1}{X}{z}.
        \]
        \item  Given a natural number $n\geq 1$ and $n$-cells $u,v$ of $X(x,y)$ such that $t_{n-1}^{X(x,y)}(u)=s_{n-1}^{X(x,y)}(v)$, we have
        \[
        u\comp{n-1}{X(x,y)}v=u\comp{n}{X}v.
        \]
    \end{enumerate}
\end{corollary}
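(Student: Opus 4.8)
The plan is to unfold both sides of each equation and then appeal to the proposition that $\widetilde{\Sigma}$ preserves standard pasting instructions; indeed, as already indicated above, both identities are instances of the fact that the standard pasting operation of arity $\kk\in(T1)_n$ on $X(x,y)$ is a restriction of the standard pasting operation of arity $\Sigma_n(\kk)\in(T1)_{n+1}$ on $X$. Concretely, for part~(1), using \cref{def:id-and-comp-in-weak-omega-cat} and the formula $\xi_{x,y}(\phi,\uu)=\xi\bigl(\widetilde{\Sigma}_n(\phi),\Sigma^{X,x,y}_n(\uu)\bigr)$ for the structure map on $X(x,y)$, one has
\[
\id{n}{X(x,y)}{z}=\xi_{x,y}\bigl(\spi([n-1]^{(n)}),[z]\bigr)=\xi\bigl(\widetilde{\Sigma}_n\bigl(\spi([n-1]^{(n)})\bigr),\Sigma^{X,x,y}_n([z])\bigr).
\]

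The next step is to apply the proposition to replace $\widetilde{\Sigma}_n\bigl(\spi([n-1]^{(n)})\bigr)$ by $\spi\bigl(\Sigma_n([n-1]^{(n)})\bigr)$, to read off from the definition of $\Sigma_n$ that $\Sigma_n([n-1]^{(n)})=[n]^{(n+1)}$, and to note that the inclusion $\Sigma^{X,x,y}_n$ carries the one-cell pasting diagram $[z]$ (with $z$ an $(n-1)$-cell of $X(x,y)$) to the one-cell pasting diagram $[z]$ in $X$ (with the same cell now regarded as an $n$-cell of $X$). Combining these identifications gives $\id{n}{X(x,y)}{z}=\xi\bigl(\spi([n]^{(n+1)}),[z]\bigr)=\id{n+1}{X}{z}$, which is part~(1).

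Part~(2) runs along the same lines with $[n-1]^{(n)}$ replaced by $\begin{bmatrix} n & & n \\ & n-1 & \end{bmatrix}^{(n)}$: I would use that $\Sigma_n$ sends this pasting scheme to $\begin{bmatrix} n+1 & & n+1 \\ & n & \end{bmatrix}^{(n+1)}$, and that the source and target maps of $X(x,y)$ are the restrictions of those of $X$ one dimension up, so that under $\Sigma^{X,x,y}_n$ the pasting diagram in $X(x,y)$ with top row $u,v$ and gluing datum $t^{X(x,y)}_{n-1}(u)=s^{X(x,y)}_{n-1}(v)$ corresponds to the pasting diagram in $X$ with top row $u,v$ and gluing datum $t^X_n(u)=s^X_n(v)$. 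I do not anticipate a genuine obstacle: the substantive content is the proposition already proved, and what remains is the two elementary $\Sigma_n$-computations together with the dimension bookkeeping for the hom globular set, so the only thing to be careful about is tracking which globular set and which dimension each cell belongs to at each stage.
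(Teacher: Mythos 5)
Your proof is correct and is essentially the paper's intended argument: the paper deduces the corollary directly from the proposition that $\widetilde{\Sigma}$ preserves standard pasting instructions, via the observation that the standard pasting operation of arity $\kk$ on $X(x,y)$ restricts that of arity $\Sigma_n(\kk)$ on $X$, which is exactly the unfolding you carry out. The two computations $\Sigma_n([n-1]^{(n)})=[n]^{(n+1)}$ and $\Sigma_n$ of the two-cell scheme, plus the bookkeeping for $\Sigma^{X,x,y}_n$, are all that is needed, as you say.
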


\section{The main theorem}
\label{sec:invertible}
\label{sec:main}
In this section, we define (weakly) invertible cells in a weak $\omega$-category, and prove that they are closed under globular pasting operations.

\subsection{The definition of invertible cells in a weak \texorpdfstring{$\omega$}{omega}-category}
In order to define (weakly) invertible cells in a weak $\omega$-category $X$, we only need a small part of its structure.
Specifically, we use the following operations, where $n\geq 1$: the operation mapping each $(n-1)$-cell $x$ of $X$ to the $n$-cell $\id{n}{}{x}$ of $X$, and the operation mapping each pair of $n$-cells $u$ and $v$ of $X$ such that $t_{n-1}(u)=s_{n-1}(v)$ to the $n$-cell $u\comp{n-1}{}v$ of $X$; see \cref{def:id-and-comp-in-weak-omega-cat}. (Globular sets equipped with such operations are called \emph{$\omega$-precategories} in \cite{Cheng_dual}.)

\begin{definition}[{\cite{Cheng_dual,Lafont_Metayer_Worytkiewicz_folk_model_str_omega_cat}}]
\label{def:invertible}
    An $n$-cell $u \colon x \to y$ (with $n \ge 1$) in a weak $\omega$-category $X$ is \emph{weakly invertible} if there exist
    \begin{itemize}
        \item an $n$-cell $v \colon  y \to x$,
        \item a weakly invertible $(n+1)$-cell $p\colon u\comp{n-1}{}v\to \id{n}{}{x}$, and
        \item a weakly invertible $(n+1)$-cell $q\colon v\comp{n-1}{}u\to \id{n}{}{y}$
    \end{itemize}
    in $X$.
    In this situation, we say that $v$ is a \emph{pseudo inverse} of $u$.
    For $n$-cells $x$ and $y$ (with $n\geq 0$), we write $x \sim y$ if there exists a weakly invertible $(n+1)$-cell $u\colon x \to y$.
\end{definition}

Since in this paper the notion of \emph{(strictly) invertible cell} in a strict $\omega$-category seldom appears, we shall abbreviate ``weakly invertible'' to ``invertible'' and ``pseudo inverse'' to ``inverse'' in what follows.

\begin{remark}
\label{rmk:coinduction}
In \cref{def:invertible}, the notion of invertible cell in a weak $\omega$-category $X$ is defined \emph{coinductively}. 
We explain what this means in detail.

In general, if $\Psi\colon L\to L$ is a monotone map on a complete lattice $L$, then the set $\mathrm{Post}(\Psi)=\{\,s\in L\mid s\leq \Psi(s)\,\}$ of all post-fixed points of $\Psi$ is closed under arbitrary joins in $L$, and hence is also a complete lattice. In particular, $\mathrm{Post}(\Psi)$ has a largest element $t$. Since $\Psi(t)\in \mathrm{Post}(\Psi)$, we have in fact $t=\Psi(t)$, i.e., $t$ is a fixed point of $\Psi$. Thus \emph{any monotone map on a complete lattice has the greatest fixed point, which is also the greatest post-fixed point} \cite{Tarski_fixed_point}.

Now let us consider the (monotone) map $\Phi^X\colon \mathcal{P}(\coprod_{n\in \mathbb{N}}X_n)\to \mathcal{P}(\coprod_{n\in \mathbb{N}}X_n)$ on the powerset lattice of the set of all cells in a weak $\omega$-category $X$, mapping $S\subseteq \coprod_{n\in \mathbb{N}}X_n$ to the set of cells that are invertible up to $S$, or more precisely
\begin{multline*}
\Phi^X(S)=\bigl\{\,(u\colon x\to y)\in X_n\,\big\vert\, n\geq 1,\ \ \exists (v\colon y\to x)\in X_n,\\
\exists \bigl({p}\colon u\comp{n-1}{}v\to \id{n}{}{x}\bigr)\in S\cap X_{n+1},\ \ \exists \bigl({q}\colon v\comp{n-1}{}u\to \id{n}{}{y}\bigr)\in S\cap X_{n+1}\,\bigr\}.
\end{multline*}
Since $\mathcal{P}(\coprod_{n\in \mathbb{N}}X_n)$ is a complete lattice, $\Phi^X$ has a greatest (post-)fixed point $I$.
\cref{def:invertible} says that we define this $I$ to be the set of all invertible cells of $X$.
Observe that the characterisation of $I$ as the greatest post-fixed point of $\Phi^X$ yields the following proof method to show that a certain cell in $X$ is invertible: in order to show that a cell $u$ in $X$ is invertible, it suffices to find a set $W\subseteq \coprod_{n\in \mathbb{N}}X_n$ with $W\subseteq \Phi^X(W)$ and $u\in W$.
\end{remark}

\begin{remark}
\label{rmk:hom-weak-omega-cat}
    Let $X$ be a weak $\omega$-category.
    By \cref{cor:composition-in-hom}, for $n\geq 2$, an $n$-cell $u$ of $X$ is invertible if and only if it is invertible as an $(n-1)$-cell in the hom weak $\omega$-category $X\bigl(s_0^X(u),t_0^X(u)\bigr)$. (This is also commented in \cite[Section~6]{Cottrell_Fujii_hom}.)
\end{remark}


\subsection{Coherence}
We now prove elementary properties of invertible cells, and establish the existence of enough \emph{coherence} cells in a weak $\omega$-category, which plays an important role throughout this paper.
\begin{proposition}[{Cf.~\cite[Lemma~4.3]{Lafont_Metayer_Worytkiewicz_folk_model_str_omega_cat}}]
\label{lem:morphism-preserves-invertible-cells}
Any strict $\omega$-functor preserves invertible cells.
\end{proposition}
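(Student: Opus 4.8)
The plan is to exploit the coinductive characterisation of invertible cells recorded in \cref{rmk:coinduction}: to prove that certain cells of a weak $\omega$-category $Y$ are invertible, it suffices to exhibit a subset $W\subseteq\coprod_{n\in\mathbb{N}}Y_n$ with $W\subseteq\Phi^Y(W)$ and to check that the cells in question lie in $W$.

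Concretely, let $f\colon(X,\xi)\to(Y,\nu)$ be a strict $\omega$-functor, write $I\subseteq\coprod_{n\in\mathbb{N}}X_n$ for the set of invertible cells of $X$, and set $W=\{\,f(c)\mid c\in I\,\}\subseteq\coprod_{n\in\mathbb{N}}Y_n$. I would then verify $W\subseteq\Phi^Y(W)$. Given a cell of $W$, write it as $f(u)$ for some invertible $n$-cell $u\colon x\to y$ of $X$. Since $I=\Phi^X(I)$, there are an $n$-cell $v\colon y\to x$ and invertible $(n+1)$-cells $p\colon u\comp{n-1}{}v\to\id{n}{}{x}$ and $q\colon v\comp{n-1}{}u\to\id{n}{}{y}$ in $X$, with $p,q\in I$. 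Applying $f$: as a morphism of globular sets $f$ preserves sources and targets, so $f(v)\colon f(y)\to f(x)$; and since $f$ preserves the standard pasting operations — in particular the operations $\id{n}{}{-}$ and $\comp{n-1}{}$ of \cref{def:id-and-comp-in-weak-omega-cat} — we have $f(u\comp{n-1}{}v)=f(u)\comp{n-1}{}f(v)$ and $f(\id{n}{}{x})=\id{n}{}{f(x)}$, so that $f(p)\colon f(u)\comp{n-1}{}f(v)\to\id{n}{}{f(x)}$, and likewise $f(q)\colon f(v)\comp{n-1}{}f(u)\to\id{n}{}{f(y)}$. By construction $f(p),f(q)\in W$, so $f(u)\in\Phi^Y(W)$. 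This proves $W\subseteq\Phi^Y(W)$, hence $W$ is contained in the greatest post-fixed point of $\Phi^Y$, i.e., in the set of invertible cells of $Y$. Since $f(u)\in W$ whenever $u$ is invertible in $X$, this is exactly the assertion.

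I do not expect a genuine obstacle: the whole content is the compatibility of $f$ with $\id{n}{}{-}$ and $\comp{n-1}{}$, which is an instance of the already-established fact that strict $\omega$-functors preserve standard pasting operations. The only point needing (routine) care is the bookkeeping of globular source/target data, so that the composites $f(u)\comp{n-1}{}f(v)$ and $f(v)\comp{n-1}{}f(u)$ are defined and have the boundaries claimed; this is immediate from \cref{prop:s-and-t-for-id-and-ast} together with $f$ commuting with $s_{n-1}$ and $t_{n-1}$.
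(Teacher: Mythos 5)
Your proposal is correct and is essentially the paper's own argument: both take $W$ to be the image under $f$ of the invertible cells of $X$ and verify $W\subseteq\Phi^Y(W)$ using that $f$ preserves sources, targets, $\id{n}{}{-}$ and $\comp{n-1}{}$ (as a strict $\omega$-functor preserves standard pasting operations), then invoke the coinductive characterisation of invertibility. No gaps; the bookkeeping you flag is exactly the routine check the paper performs.
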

\begin{proof}
Let $f\colon X\to Y$ be a strict $\omega$-functor between weak $\omega$-categories. Define the set $W\subseteq \coprod_{n\geq 1}Y_n$ by
\[
W=\{\,u'\in Y_n\mid \text{$n\geq 1$ and there exists an invertible $n$-cell $u$ of $X$ with $u'=fu$}\,\}.
\]
We show that $W\subseteq \Phi^Y(W)$. If $u'\in W\cap Y_n$, then there exists an invertible $n$-cell $u\colon x\to x'$ of $X$ with $u'=fu$. So there exist an $n$-cell $v\colon x'\to x$ and invertible $(n+1)$-cells ${p}\colon u\comp{n-1}{X}v\to \id{n}{X}{x}$ and ${q}\colon v\comp{n-1}{X}u\to \id{n}{X}{x'}$ in $X$. Then $f{p},f{q}\in W$ and we have $f{p}\colon  fu\comp{n-1}{Y}fv\to\id{n}{Y}{fx}$ and $f{q}\colon fv\comp{n-1}{Y}fu\to \id{n}{Y}{fx'}$, showing $u'=fu\in \Phi^Y(W)$.
\end{proof}

A strict $\omega$-functor is \emph{contractible} (or \emph{locally a trivial fibration}) \cite[Section~9.1]{Leinster_book} if its underlying morphism of globular sets has the right lifting property with respect to $\iota_n\colon\partial\OO^n\to \OO^n$ for each $n\geq 1$. 

\begin{proposition}[{Cf.~\cite[Lemma~4.9]{Lafont_Metayer_Worytkiewicz_folk_model_str_omega_cat}}]
\label{prop:contractible-reflect-invertible}
    Any contractible strict $\omega$-functor reflects invertible cells.
\end{proposition}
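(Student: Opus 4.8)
The plan is to apply the coinductive proof method highlighted in \cref{rmk:coinduction}. Let $f\colon X\to Y$ be a contractible strict $\omega$-functor between weak $\omega$-categories, and set
\[
W=\{\,u\in X_n\mid \text{$n\geq 1$ and $fu$ is invertible in $Y$}\,\}.
\]
First I would check that $W\subseteq\Phi^X(W)$; by the proof method of \cref{rmk:coinduction} this implies that every element of $W$ is invertible in $X$, and in particular that any $n$-cell $u$ of $X$ with $fu$ invertible in $Y$ lies in $W$ and hence is invertible, which is exactly the statement.

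To verify $W\subseteq\Phi^X(W)$, suppose $u\colon x\to x'$ lies in $W\cap X_n$. Since $fu$ is invertible in $Y$, there are an $n$-cell $v'\colon fx'\to fx$ of $Y$ and invertible $(n+1)$-cells $p'\colon fu\comp{n-1}{}v'\to\id{n}{}{fx}$ and $q'\colon v'\comp{n-1}{}fu\to\id{n}{}{fx'}$ in $Y$. The idea is to lift these witnesses back to $X$ using that $f$, being contractible, has the right lifting property against each $\iota_m\colon\partial\OO^m\to\OO^m$. First, the morphism $\ppair{x',x}\colon\partial\OO^n\to X$ and $v'\colon\OO^n\to Y$ form a lifting problem of $f$ against $\iota_n$, since $f$ sends the pair $(x',x)$ to $(fx',fx)$, which is the boundary of $v'$; a solution is an $n$-cell $v\colon x'\to x$ of $X$ with $fv=v'$. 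Since $f$ is a strict $\omega$-functor it preserves the standard pasting operations $u,v\mapsto u\comp{n-1}{}v$ and $x\mapsto\id{n}{}{x}$, so $f(u\comp{n-1}{}v)=fu\comp{n-1}{}v'$ and $f(\id{n}{}{x})=\id{n}{}{fx}$; hence $\ppair{u\comp{n-1}{}v,\id{n}{}{x}}\colon\partial\OO^{n+1}\to X$ (this pair being parallel by \cref{prop:s-and-t-for-id-and-ast}) and $p'\colon\OO^{n+1}\to Y$ form a lifting problem of $f$ against $\iota_{n+1}$, a solution of which is an $(n+1)$-cell $p\colon u\comp{n-1}{}v\to\id{n}{}{x}$ of $X$ with $fp=p'$. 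Symmetrically one obtains an $(n+1)$-cell $q\colon v\comp{n-1}{}u\to\id{n}{}{x'}$ of $X$ with $fq=q'$.

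The observation that closes the coinduction is that $p$ and $q$ automatically lie in $W$, being $(n+1)$-cells whose $f$-images $p'$ and $q'$ are invertible in $Y$. Thus $v$, $p$, and $q$ witness $u\in\Phi^X(W)$, as required. I do not expect any serious obstacle here: the whole argument is a diagram chase resting on the lifting property of $f$ and on the fact (already used in \cref{lem:morphism-preserves-invertible-cells}) that strict $\omega$-functors commute with $\comp{n-1}{}$ and with $\id{n}{}{-}$; the one conceptual point is the self-referential remark just made, which is precisely what the characterisation of the invertible cells as the greatest post-fixed point of $\Phi^X$ licenses.
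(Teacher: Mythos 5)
Your proposal is correct and follows essentially the same route as the paper's proof: the same set $W$ of cells with invertible image, the same coinductive verification that $W\subseteq\Phi^X(W)$, and the same use of the lifting property of $f$ against $\iota_n$ and $\iota_{n+1}$ (together with the fact that strict $\omega$-functors preserve $\comp{n-1}{}$ and $\id{n}{}{-}$) to lift the inverse and the two witnessing cells, whose lifts lie in $W$ since their images are invertible. No gaps; your explicit spelling-out of the lifting squares and of parallelism via \cref{prop:s-and-t-for-id-and-ast} only makes explicit what the paper leaves implicit.
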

\begin{proof}
    Let ${f}\colon X\to Y$ be a contractible strict $\omega$-functor between weak $\omega$-categories. Define the set $W\subseteq \coprod_{n\geq 1}X_n$ by 
    \[
    W=\{\,u\in X_n\mid n\geq 1\text{ and }{f}u\text{ is invertible}\,\}.
    \]
    We show that $W\subseteq \Phi^X(W)$. If $(u\colon x\to x')\in W\cap X_n$, then ${f}u\colon {f}x\to {f}x'$ is invertible in $Y$ and hence we can take its inverse $v\colon {f}x'\to {f}x$. By contractibility of ${f}$, there exists $\overline{v}\colon x'\to x$ in $X$ with ${f}\overline{v}=v$.
    We also have invertible cells ${p}\colon {f}u\comp{n-1}{Y}v\to \id{n}{Y}{{f}x}$ and ${q}\colon v\comp{n-1}{Y}{f}u\to \id{n}{Y}{{f}x'}$.
    Since ${f}u\comp{n-1}{Y}v={f}u\comp{n-1}{Y}{f}\overline{v}={f}(u\comp{n-1}{X}\overline{v})$ and $\id{n}{Y}{{f}x}={f}(\id{n}{X}{x})$, by contractibility of ${f}$ we obtain $\overline{{p}}\colon u\comp{n-1}{X}v\to \id{n}{X}{x}$ in $X$ with ${f}\overline{{p}}={p}$. Similarly, we obtain $\overline{{q}}\colon \overline{v}\comp{n-1}{X}u\to \id{n}{X}{x'}$ in $X$ with ${f}\overline{{q}}={q}$. Since $\overline{{p}},\overline{{q}}\in W$, we have $u\in\Phi^X(W)$.
\end{proof}

Recall that any strict $\omega$-category $(X,\gamma\colon TX\to X)$ can be regarded as a weak $\omega$-category $(X,\gamma\circ\ar_X\colon LX\to X)$.
\begin{proposition}[{\cite[Proposition~4.4 (1)]{Lafont_Metayer_Worytkiewicz_folk_model_str_omega_cat}}]
\label{prop:id-str-omega-cat-invertible}
    Any identity cell in a strict $\omega$-category is invertible.
\end{proposition}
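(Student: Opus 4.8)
The plan is to use the coinductive proof method of \cref{rmk:coinduction}: it suffices to exhibit a set $W$ of cells of $X$ with $W\subseteq \Phi^X(W)$ that contains every identity cell. I would take $W$ to be the set of \emph{all} identity cells of $X$,
\[
W=\bigl\{\,\id{n}{X}{x}\ \big\vert\ n\geq 1,\ x\in X_{n-1}\,\bigr\},
\]
which has the convenient feature of being closed under forming identities: if $u=\id{n}{X}{x}\in W$, then $\id{n+1}{X}{u}\in W$ as well, since $u$ is itself an $n$-cell.

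Before checking $W\subseteq \Phi^X(W)$, I would first record how the operations $\id{n}{}{-}$ and $\comp{n-1}{}$ of \cref{def:id-and-comp-in-weak-omega-cat} behave when a strict $\omega$-category $(X,\gamma\colon TX\to X)$ is regarded as a weak one via $\xi=\gamma\circ\ar_X$. Since $\spi\colon T1\to L1$ is a section of $\ar$, for any pasting scheme $\kk\in(T1)_n$ and pasting diagram $\uu\in(TX)_n$ of shape $\kk$ we have $\ar_X(\spi(\kk),\uu)=\uu$, so $\xi(\spi(\kk),\uu)=\gamma(\uu)$. Hence $\id{n}{X}{x}$ is the strict identity $\gamma([x])$ and $u\comp{n-1}{X}v$ is the strict composite $\gamma([u,v])$. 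As identities act as strict units for the strict compositions (a standard consequence of the $T$-algebra axioms), this gives $\id{n}{X}{x}\comp{n-1}{X}\id{n}{X}{x}=\id{n}{X}{x}$.

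Now I verify $W\subseteq \Phi^X(W)$. Let $u=\id{n}{X}{x}\in W\cap X_n$, so $u\colon x\to x$, and take $v=\id{n}{X}{x}\colon x\to x$ as the candidate inverse. By the previous paragraph, $u\comp{n-1}{X}v=\id{n}{X}{x}=v\comp{n-1}{X}u$ as $n$-cells of $X$; together with \cref{prop:s-and-t-for-id-and-ast} this shows that the identity $(n+1)$-cell $p=q=\id{n+1}{X}{\id{n}{X}{x}}$ has source $u\comp{n-1}{X}v$ (resp.\ $v\comp{n-1}{X}u$) and target $\id{n}{X}{x}$, so it serves as the required $p$ and $q$. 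Since $p,q\in W$, we conclude $u\in\Phi^X(W)$. Therefore $W\subseteq\Phi^X(W)$, and hence every identity cell of $X$ is invertible. There is no genuine obstacle here: the only points requiring care are that the weak $\omega$-category operations reduce to their strict counterparts on a strict $\omega$-category, and that $W$ is closed under the passage from $u$ to the witnessing cells $p,q$ — which holds precisely because these witnesses are again identity cells.
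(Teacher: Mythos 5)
Your proposal is correct and is essentially the paper's own proof: the paper's argument is exactly the coinductive one that takes $W$ to be the set of all identity cells (stated there in a single sentence), and your write-up just makes explicit the verification that the weak operations reduce to the strict ones and that the witnessing cells $p,q$ are again identities lying in $W$.
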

\begin{proof}
Given a strict $\omega$-category $X$, consider the set $W$ of all identity cells in $X$.
\end{proof}

It follows that any strictly invertible cell in a strict $\omega$-category is invertible (in the sense of \cref{def:invertible}).
Note that in the strict $\omega$-category $(T1,\mu^T_1\colon T^21\to T1)$, an $n$-cell $\kk$ is an identity $n$-cell if and only if it is degenerate as an $n$-cell of $T1$. 
Similarly, for any globular set $X$, an $n$-cell $\uu$ of the strict $\omega$-category $(TX,\mu^T_X\colon T^2X\to TX)$ is an identity $n$-cell if and only if its shape is degenerate.

\begin{proposition}
\label{prop:base-case}
Let $(X,\xi)$ be a weak $\omega$-category, $n\geq 1$, and $(\phi,\uu)\in (LX)_n$.
If $\ar(\phi)\in (T1)_n$ is degenerate, then the $n$-cell $\xi(\phi,\uu)$ in $X$ is invertible.
\end{proposition}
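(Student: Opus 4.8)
The plan is to use the coinductive proof method highlighted in \cref{rmk:coinduction}: to show that a cell is invertible, it suffices to exhibit a set $W$ of cells with $W\subseteq\Phi^X(W)$ containing that cell. Here the natural candidate is the set of \emph{all} cells of $X$ of the form $\xi(\phi,\uu)$ with $\ar(\phi)$ degenerate; that is, set
\[
W=\bigl\{\,\xi(\phi,\uu)\in X_n\,\big\vert\, n\geq 1,\ (\phi,\uu)\in(LX)_n,\ \ar(\phi)\in(T1)_n\text{ is degenerate}\,\bigr\}.
\]
The statement is then exactly that $W$ is contained in the set of invertible cells, so it is enough to verify $W\subseteq\Phi^X(W)$.

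First I would observe that if $\ar(\phi)=\kk$ is degenerate as an $n$-cell of $T1$, then $\kk$ is an identity $n$-cell in the strict $\omega$-category $(T1,\mu^T_1)$ (as noted just before the statement). Since $L1$ is a weak $\omega$-category and $\spi$ (equivalently the relevant piece of $\kappa$/$\eta^L$) furnishes a candidate inverse coherently, the key point is that a degenerate shape admits an essentially trivial pasting instruction: one can find an $n$-cell $\phi'\in(L1)_n$ with the same arity $\kk$ playing the role of a pseudo-inverse, together with $(n+1)$-cells
\[
\widetilde p\colon \phi\comp{n-1}{L1}\phi'\to \id{n}{L1}{\widetilde e_{n-1}},\qquad
\widetilde q\colon \phi'\comp{n-1}{L1}\phi\to \id{n}{L1}{\widetilde e_{n-1}}
\]
whose arities are again degenerate. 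Then, given $(\phi,\uu)\in(LX)_n$ with $\uu$ a pasting diagram of shape $\kk$, I would set $v=\xi(\phi',\uu')$ where $\uu'$ is the appropriate pasting diagram in $X$ of shape $\kk$ built from the source/target data of $\uu$ (note that since $\kk$ is degenerate, the $n$-cell entries of $\uu$ are unused up to lower-dimensional data, so $\uu'$ can be taken to have the same lower cells as $\uu$; one must check $v\colon y\to x$, which follows from \cref{prop:s-and-t-for-id-and-ast} together with compatibility of $\xi$ with $s,t$). Applying $\xi$ to $\widetilde p$ and $\widetilde q$ (whiskered with $\uu$ in the evident way, using the algebra axiom $\xi\circ\mu^L_X=\xi\circ L\xi$ to rewrite composites) yields $(n+1)$-cells
\[
p\colon \xi(\phi,\uu)\comp{n-1}{}v\to\id{n}{}{x},\qquad
q\colon v\comp{n-1}{}\xi(\phi,\uu)\to\id{n}{}{y}
\]
in $X$, which again have degenerate arity and hence lie in $W$. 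This establishes $\xi(\phi,\uu)\in\Phi^X(W)$, completing the argument.

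The main obstacle I anticipate is the bookkeeping in the previous paragraph: producing the inverse pasting instruction $\phi'$ and the coherence cells $\widetilde p,\widetilde q$ in $L1$ with \emph{degenerate} arities, and then correctly expressing the composites $\xi(\phi,\uu)\comp{n-1}{}v$ and $\id{n}{}{x}$ as values of $\xi$ on cells of $L^2X$ so that the algebra axiom can be invoked to rewrite them. Concretely, one needs that the $n$-cell of $T1$ underlying a composite of two degenerate $n$-cells along their $(n-1)$-boundary is itself degenerate, which is immediate from the formula $\max\{k_0,\dots,k_r\}<n$ characterising degeneracy, and similarly that $\id{n}{}{-}$ has degenerate arity $[n-1]^{(n)}$. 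Given these observations, which are all consequences of the explicit description of $T$ in \cref{subsec:T} and of \cref{prop:s-and-t-for-id-and-ast}, the verification $W\subseteq\Phi^X(W)$ goes through; a cleaner route, which I would actually adopt to minimise calculation, is to do all of this first in the universal case $X=L1$ and then transport along the unique strict $\omega$-functor $L1\to X$ classifying $\uu$, using \cref{lem:morphism-preserves-invertible-cells} — though strictly speaking $\uu$ classifies a map out of $\widehat{\ar(\phi)}$ rather than out of $L1$, so some care is still needed and I would likely keep the direct argument.
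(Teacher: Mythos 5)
Your route is genuinely different from the paper's. The paper disposes of \cref{prop:base-case} in a few lines by combining facts it has already established: since $\xi$ is a strict $\omega$-functor $(LX,\mu^L_X)\to(X,\xi)$ it preserves invertible cells (\cref{lem:morphism-preserves-invertible-cells}), so it suffices to show that $(\phi,\uu)$ is invertible in $LX$; this holds because $\ar_X\colon LX\to TX$ is contractible and hence reflects invertible cells (\cref{prop:contractible-reflect-invertible}), while $\uu$ is an identity cell of the strict $\omega$-category $TX$ and hence invertible (\cref{prop:id-str-omega-cat-invertible}). You instead run the coinduction of \cref{rmk:coinduction} directly in $X$, on the set $W$ of all composites of degenerate arity, constructing the inverse data by hand via the contraction of $L1$. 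That is viable, and it essentially re-derives in this special case what the paper's three propositions package up; what the paper's factorisation buys is that all the $\kappa$-juggling is done once, inside those earlier proofs, rather than here.

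There is one concrete slip to repair: the cells $\widetilde p\colon \phi\comp{n-1}{L1}\phi'\to\id{n}{L1}{\widetilde e_{n-1}}$ and $\widetilde q$ you posit are ill-typed unless $\ar(\phi)=[n-1]$, because $t^{L1}_{n-1}\bigl(\phi\comp{n-1}{L1}\phi'\bigr)=s^{L1}_{n-1}(\phi)$ is an $(n-1)$-cell of arity $\kk^{(n-1)}$, not $\widetilde e_{n-1}$; the correct codomains are $\id{n}{L1}{s^{L1}_{n-1}(\phi)}$ and $\id{n}{L1}{t^{L1}_{n-1}(\phi)}$. The repair is routine. Take $\phi'=\kappa\bigl(\ppair{t^{L1}_{n-1}(\phi),s^{L1}_{n-1}(\phi)},\kk^{(n)}\bigr)$ and $v=\xi(\phi',\uu)$ (no modified $\uu'$ is needed: since $\kk$ is degenerate, $\uu$ has no $n$-dimensional entries). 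Then $\phi\comp{n-1}{L1}\phi'$ and $\id{n}{L1}{s^{L1}_{n-1}(\phi)}$ are parallel cells of $L1$, both from $s^{L1}_{n-1}(\phi)$ to itself, with common arity $\kk^{(n)}$ (using $\kk\comp{n-1}{T1}\kk=\kk$ for degenerate $\kk$ and the monad-morphism property of $\ar$), so $\widetilde p=\kappa\bigl(\ppair{\phi\comp{n-1}{L1}\phi',\id{n}{L1}{s^{L1}_{n-1}(\phi)}},\kk^{(n+1)}\bigr)$ exists and has degenerate arity; setting $p=\xi(\widetilde p,\uu)$ with $\uu$ regarded as a degenerate $(n+1)$-cell of $TX$, the boundary computation you sketch via $\xi\circ L\xi=\xi\circ\mu^L_X$ (together with $\uu\comp{n-1}{TX}\uu=\uu$) gives $p\colon \xi(\phi,\uu)\comp{n-1}{X}v\to\id{n}{X}{x}$ with $p\in W$, and similarly for $q$. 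With this correction your coinductive argument goes through.
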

\begin{proof}
Since $\xi$ is a strict $\omega$-functor from $(LX,\mu^L_X\colon L^2X\to LX)$ to $(X,\xi\colon LX\to X)$, by \cref{lem:morphism-preserves-invertible-cells} it suffices to show that the $n$-cell $(\phi,\uu)$ in $LX$ is invertible.
This follows from the facts that $\ar_X\colon LX\to TX$ is a contractible morphism of weak $\omega$-categories from $(LX,\mu^L_X\colon L^2X\to LX)$ to $(TX,\mu^T_X\circ\ar_{TX}\colon LTX\to TX)$, and that $\uu$ is an identity $n$-cell in the strict $\omega$-category $(TX,\mu^T_X\colon T^2X\to TX)$, by \cref{prop:id-str-omega-cat-invertible,prop:contractible-reflect-invertible}.
\end{proof}

We can now generalise \cref{prop:id-str-omega-cat-invertible} to weak $\omega$-categories. Namely, if $(X,\xi)$ is a weak $\omega$-category, $n\geq 0$, and $x\in X_n$, then $\id{n+1}{X}{x}\in X_{n+1}$ is invertible. This is because we have 
\[
\id{n+1}{X}{x}=\xi\bigl(\id{n+1}{L1}{\widetilde e_n},[x]\bigr)
\]
and $\ar\bigl(\id{n+1}{L1}{\widetilde e_n}\bigr)=\ar\bigl(\spi([n]^{(n+1)})\bigr)=[n]^{(n+1)}\in (T1)_{n+1}$ is degenerate.

More generally, notice that if $n\geq 0$ and $\phi,\phi'\in (L1)_n$ are parallel $n$-cells with $\ar(\phi)=\ar(\phi')=\kk\in (T1)_n$, then we can regard $\kk$ as an $(n+1)$-cell $\kk^{(n+1)}\colon \kk^{(n)}\to \kk^{(n)}$ of $T1$ and hence obtain an $(n+1)$-cell
\[
\kappa(\ppair{\phi,\phi'},\kk^{(n+1)})\colon \phi\to \phi'
\]
of $L1$.

\begin{proposition}[Coherence]
\label{cor:coherence}
Let $n\geq 0$ and $\phi,\phi'\in (L1)_n$ be parallel $n$-cells with $\ar(\phi)=\ar(\phi')=\kk\in (T1)_n$.
Then for any weak $\omega$-category $(X,\xi)$ and any pasting diagram $\uu$ of shape $\kk$ in $X$, the $(n+1)$-cell 
\[
\xi\bigl(\kappa(\ppair{\phi,\phi'},\kk^{(n+1)}),\uu\bigr)\colon \xi(\phi,\uu)\to\xi(\phi',\uu)
\]
in $X$ is invertible. 
In particular, we have $\xi(\phi,\uu)\sim\xi(\phi',\uu)$ in $X$.
\end{proposition}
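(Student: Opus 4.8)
The plan is to obtain this as an immediate consequence of \cref{prop:base-case}; the only thing to check is that the coherence cell has degenerate arity. First I would record, using the defining property of the contraction $\kappa$, that $\ar\bigl(\kappa(\ppair{\phi,\phi'},\kk^{(n+1)})\bigr)=\kk^{(n+1)}$. Since $\kk\in(T1)_n$, all of its entries $k_0,\dots,k_r$ are $\le n$, so $\max\{k_0,\dots,k_r\}<n+1$ and hence $\kk^{(n+1)}$ is a degenerate $(n+1)$-cell of $T1$.

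Next I would apply \cref{prop:base-case} at dimension $n+1$ to the pair $\bigl(\kappa(\ppair{\phi,\phi'},\kk^{(n+1)}),\uu\bigr)$. Here $\uu$ is the given pasting diagram of shape $\kk$, regarded as an $(n+1)$-cell of $TX$ (which is legitimate since $\kk$ has dimension at most $n$); its shape, viewed at dimension $n+1$, is then $\kk^{(n+1)}=\ar\bigl(\kappa(\ppair{\phi,\phi'},\kk^{(n+1)})\bigr)$, so the pair is a genuine $(n+1)$-cell of $LX$. As its first component has degenerate arity, \cref{prop:base-case} yields that $\xi\bigl(\kappa(\ppair{\phi,\phi'},\kk^{(n+1)}),\uu\bigr)$ is an invertible $(n+1)$-cell of $X$.

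It remains to identify its source and target. From $\kappa(\ppair{\phi,\phi'},\kk^{(n+1)})\colon\phi\to\phi'$ in $L1$ together with the fact that $s^{TX}_n$ and $t^{TX}_n$ fix $\uu$ (its shape having dimension $\le n$), the source/target formulas for $LX$ give $s^{LX}_n\bigl(\kappa(\ppair{\phi,\phi'},\kk^{(n+1)}),\uu\bigr)=(\phi,\uu)$ and $t^{LX}_n\bigl(\kappa(\ppair{\phi,\phi'},\kk^{(n+1)}),\uu\bigr)=(\phi',\uu)$; applying $\xi$, which commutes with source and target, we conclude $\xi\bigl(\kappa(\ppair{\phi,\phi'},\kk^{(n+1)}),\uu\bigr)\colon\xi(\phi,\uu)\to\xi(\phi',\uu)$, and $\xi(\phi,\uu)\sim\xi(\phi',\uu)$ then follows directly from \cref{def:invertible}. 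There is no genuine obstacle in the argument; the one point needing a moment's care is the harmless identification of $\kk$ (and of $\uu$) regarded at dimension $n$ with the same datum regarded at dimension $n+1$, which is valid precisely because the maximal entry of $\kk$ is $\le n$.
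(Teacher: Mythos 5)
Your proposal is correct and follows the same route as the paper: the paper's proof is exactly the observation that $\ar\bigl(\kappa(\ppair{\phi,\phi'},\kk^{(n+1)})\bigr)=\kk^{(n+1)}$ is degenerate, so \cref{prop:base-case} applies. Your additional verification of the source/target identification is sound but is left implicit in the paper.
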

\begin{proof}
Because
\[
\ar\bigl(\kappa(\ppair{\phi,\phi'},\kk^{(n+1)})\bigr)=\kk^{(n+1)}\colon \kk^{(n)}\to \kk^{(n)}
\]
is degenerate.
\end{proof}

Note that \cref{cor:coherence} shows that any two parallel (in the sense of being induced from parallel $n$-cells of $L1$) composites of a pasting diagram in a weak $\omega$-category are equivalent up to an invertible cell. 

\begin{remark}
\label{rmk:contractibility}
\cref{cor:coherence} is the uniqueness part of the pasting theorem promised in \cref{subsec:idea_of_weak_omega_cat}; it states precisely that, in a weak $\omega$-category, any two ways of composing a given $n$-dimensional pasting diagram yield the same composite up to invertible $(n+1)$-cell, as long as they agree on how to compose the boundary.
However, it is possible to give a different formulation of the uniqueness, namely as the contractibility of a suitable ``space'' of composites.

Recall that, given a weak $\omega$-category $X$ and $0$-cells $x,y \in X_0$, we may construct the {hom weak $\omega$-category} $X(x,y)$.
It is easy to see that if (the underlying globular map of) a strict $\omega$-functor $f\colon X \to Y$ has the right lifting property with respect to $\iota_k\colon \partial\OO^k \to \OO^k$ for some $k\geq 1$, then the induced map $f_{x,y}\colon X(x,y) \to Y(fx,fy)$ has the right lifting property with respect to $\iota_{k-1}\colon\partial\OO^{k-1} \to \OO^{k-1}$.

Now, let $X$ be a weak $\omega$-category, $n \ge 1$, $\kk$ an $n$-dimensional pasting scheme, $\uu$ a pasting diagram of shape $\kk$ in $X$, and $\phi,
\phi'\in (L1)_{n-1}$ parallel cells of arity $s^{TX}_{n-1}(\kk) = t^{TX}_{n-1}(\kk)$.
Then we can construct the following \emph{trivial fibration} (that is, a strict $\omega$-functor whose underlying globular map has the right lifting property with respect to $\iota_k\colon \partial\OO^k \to \OO^k$ for all $k \ge 0$) by repeatedly taking the hom weak $\omega$-categories:
\[
LX\bigl((\phi,s_{n-1}^{TX}(\uu)),(\phi',t_{n-1}^{TX}(\uu))\bigr) \to TX \bigl(s_{n-1}^{TX}(\uu), t_{n-1}^{TX}(\uu)\bigr).
\]
The codomain is a strict $\omega$-category, so there is a strict $\omega$-functor from the terminal weak $\omega$-category into it that picks out the $0$-cell $\uu$.
The pullback (in $\WkCats{\omega}$) of the resulting cospan is then the weak $\omega$-category of possible composites of $\uu$ satisfying the boundary conditions specified by $\phi$ and $\phi'$.
Since it admits a trivial fibration to the terminal weak $\omega$-category, one can reasonably call it a \emph{contractible space}; at least it is a weak $\omega$-groupoid (i.e., a weak $\omega$-category in which every cell of dimension $\geq 1$ is invertible) by \cref{prop:contractible-reflect-invertible,prop:id-str-omega-cat-invertible}.
\end{remark}

\subsection{The main theorem}

The main theorem of this paper is the following: if $(X,\xi)$ is a weak $\omega$-category and $(\phi,\uu)$ is an $n$-cell of $LX$ such that all $n$-cells of $X$ appearing in the pasting diagram $\uu$ are invertible in $X$, then the $n$-cell $\xi(\phi,\uu)$ is also invertible in $X$. 
Notice that the special case of this claim where $\uu$ does not contain any $n$-cell of $X$, is precisely \cref{prop:base-case}.
However, since the $n$-cell $(\phi,\uu)$ is not invertible in $LX$ whenever $\uu$ is non-degenerate, we will need more discussion in the general case.

\begin{definition}
\label{def:delta-q}
    Let $\kk$ be a pasting scheme of dimension $n$ and rank $r$.
    Let $0 \le i \le r$ and suppose $k_i = n$:
    \[
    \kk=
    \begin{bmatrix}
    k_0 & & \dots & & n & & \dots & & k_r\\
    & \underline k_1 & \dots & \underline k_{i} & & \underline k_{i+1} & \dots & \underline k_r &
    \end{bmatrix}.
    \]
    We write $\delta^i(\kk)$ for the $n$-dimensional pasting scheme defined by 
    \begin{align*}
    \delta^i(\kk)=\mu^T_1\left( 
    \begin{bmatrix}
    [k_0] & & \dots & & [n-1]^{(n)} & & \dots & & [k_r]\\
    & [\underline k_1] & \dots & [\underline k_{i}] & & [\underline k_{i+1}] & \dots & [\underline k_r] &
    \end{bmatrix}\right).
    \end{align*}
    Explicitly, we have the following description of $\delta^i(\kk)$.
    \begin{itemize}
         \item If $i > 0$ and $\underline k_i = n-1$, then $\delta^i(\kk)$ is obtained from $\kk$ by removing $k_i$ and $\underline k_i$. 
        \item If $i < r$ and $\underline k_{i+1} = n-1$, then $\delta^i(\kk)$ is obtained from $\kk$ by removing $k_i$ and $\underline k_{i+1}$.
        \item Otherwise, $\delta^i(\kk)$ is obtained from $\kk$ by replacing $k_i = n$ by $n-1$.
    \end{itemize}
    (Note that, although it is possible for $\kk$ to satisfy the premises of both the first and second clauses, the two definitions of $\delta^i(\kk)$ agree in that case.)
    
    Let $\uu$ be a pasting diagram of shape $\kk$ as above in a weak $\omega$-category $X$.
    Suppose that there exists $x \in X_{n-1}$ such that $u_i = \id{n}{X}{x}$:
    \[
    \uu=\begin{bmatrix}
    u_0 & & \dots & & \id{n}{X}{x} & & \dots & & u_r\\
    & \underline u_1 & \dots & \underline u_{i} & & \underline u_{i+1} & \dots & \underline u_r &
    \end{bmatrix}.
    \]
    We write $\delta^i(\uu)$ for the pasting diagram of shape $\delta^i(\kk)$ in $X$ defined by 
    \[
    \delta^i(\uu)=
    \mu^T_X\left( 
    \begin{bmatrix}
    [u_0] & & \dots & & [x]^{(n)} & & \dots & & [u_r]\\
    & [\underline u_1] & \dots & [\underline u_{i}] & & [\underline u_{i+1}] & \dots & [\underline u_r] &
    \end{bmatrix}\right).
    \]
    Explicitly, we have the following description of $\delta^i(\uu)$.
    \begin{itemize}
        \item If $i > 0$ and $\underline k_i = n-1$, then $\delta^i(\uu)$ is obtained from $\uu$ by removing $u_i$ and $\underline u_i = x$.
        \item If $i < r$ and $\underline k_{i+1} = n-1$, then $\delta^i(\uu)$ is obtained from $\uu$ by removing $u_i$ and $\underline u_{i+1} = x$.
        \item Otherwise, $\delta^i(\uu)$ is obtained from $\uu$ by replacing $u_i$ by $x$.
    \end{itemize}
        (Note that, although it is possible for $\uu$ to satisfy the premises of both the first and second clauses, the two definitions of $\delta^i(\uu)$ agree in that case.)

Let $\phi$ be an $n$-cell of $L1$ with $\ar(\phi)=\kk$ as above. We write $\delta^i(\phi)$ for the $n$-cell of $L1$ defined as $\delta^i(\phi)=\kappa\bigl(\ppair{s^{L1}_{n-1}(\phi),t^{L1}_{n-1}(\phi)},\delta^i(\kk)\bigr)$.
\end{definition}

\begin{example}
\label{example-of-delta}
Let 
\[
\kk=
\begin{bmatrix}
    2 & & 2 & & 2 & & 1 & & 2 \\
    & 1 & & 1 & & 0 & & 0 & 
\end{bmatrix},
\]
and suppose we have a $2$-dimensional pasting diagram 
\[
\uu=
\begin{bmatrix}
    \alpha & & \id{2}{}{g} & & \beta & & i & & \gamma  \\
    & g & & g & & b & & c & 
\end{bmatrix}
\]
of shape $\kk$ in a weak $\omega$-category $X$, which may be depicted as 
\begin{equation}
\label{eqn:pasting-diag-with-ix}
\begin{tikzpicture}[baseline=-\the\dimexpr\fontdimen22\textfont2\relax ]
      \node(0) at (0,0) {$a$};
      \node(1) at (2,0) {$b$};
      \node(2) at (4,0) {$c$};
      \node(3) at (6,0) {$d$.};

      \draw [->]  (0) .. controls (0,2) and (2,2) .. node[auto,labelsize] {$f$} (1);
      \draw [->,bend left=70]  (0) to node[labelsize,midway,fill=white] {$g$} (1);
      \draw [->,bend right=70] (0) to node[labelsize,midway,fill=white] {$g$} (1); 
      \draw [->]  (0) .. controls (0,-2) and (2,-2) .. node[auto,swap,labelsize] {$h$} (1);

      \draw [->] (1,1.4) to node[auto,labelsize] {$\alpha$} (1,0.9);
      \draw [->] (1,0.5) to node[auto,font=\tiny] {$\id{2}{}{g}$} (1,-0.5);
      \draw [->] (1,-0.9) to node[auto,labelsize] {$\beta$} (1,-1.4);

      \draw [->] (1) to node (20) {} (2);
      \path (1) to node[auto,labelsize] {$i$} (2);
      
      \draw [->,bend left=30]  (2) to node[auto,labelsize] {$j$} (3); 
      \draw [->,bend right=30]  (2) to node[auto,swap,labelsize] {$k$} (3);
      
      \draw [->] (5,0.25) to node[auto,labelsize] {$\gamma$} (5,-0.25);
\end{tikzpicture}
\end{equation}
Then we have
\[
\delta^1(\uu)=
\begin{bmatrix}
    \alpha & & \beta & & i & & \gamma \\
    & g & & b & & c & 
\end{bmatrix}
\]
which may be depicted as
\begin{equation}
\label{eqn:pasting-diag-without-ix}
\begin{tikzpicture}[baseline=-\the\dimexpr\fontdimen22\textfont2\relax ]
      \node(0) at (0,0) {$a$};
      \node(1) at (2,0) {$b$};
      \node(2) at (4,0) {$c$};
      \node(3) at (6,0) {$d$.};

      \draw [->,bend left=70]  (0) to node[auto,labelsize] {$f$} (1); 
      \draw [->]  (0) to node[labelsize,midway,fill=white] {$g$} (1);
      \draw [->,bend right=70]  (0) to node[auto,swap,labelsize] {$h$} (1);

      \draw [->] (1,0.6) to node[auto,labelsize] {$\alpha$} (1,0.2);
      \draw [->] (1,-0.2) to node[auto,labelsize] {$\beta$} (1,-0.6);

      \draw [->] (1) to node (20) {} (2);
      \path (1) to node[auto,labelsize] {$i$} (2);
      
      \draw [->,bend left=30]  (2) to node[auto,labelsize] {$j$} (3); 
      \draw [->,bend right=30]  (2) to node[auto,swap,labelsize] {$k$} (3);
      
      \draw [->] (5,0.25) to node[auto,labelsize] {$\gamma$} (5,-0.25);
\end{tikzpicture}\qedhere
\end{equation}
\end{example}

The following lemma formalises the idea that composites of diagrams such as  \cref{eqn:pasting-diag-with-ix} and \cref{eqn:pasting-diag-without-ix} in a weak $\omega$-category should be equivalent up to an invertible cell.

\begin{proposition}[Unit Law]
\label{lem:unit-law}
    Let $n\geq 1$, $\kk$ be a pasting scheme of dimension $n$ and rank $r$, $(X,\xi)$ a weak $\omega$-category, and $(\phi,\uu)\in (LX)_n$ an $n$-cell such that $\ar(\phi)=\kk$.
    Let $0 \le i \le r$ and suppose $k_i = n$ and $u_i = \id{n}{}{x}$ for some $x \in X_{n-1}$.
    Then we have 
    \[
    \xi(\phi,\uu) \sim \xi\bigl(\delta^i(\phi),\delta^i(\uu)\bigr)
    \]
    in $X$.
\end{proposition}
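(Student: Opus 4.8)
The plan is to exhibit both $\xi(\phi,\uu)$ and $\xi(\delta^i(\phi),\delta^i(\uu))$ as composites of the \emph{same} pasting diagram $\delta^i(\uu)$ under two \emph{parallel} pasting instructions in $L1$, and then to apply the Coherence proposition \cref{cor:coherence}. The bridge between the two sides is a single $n$-cell of $L^2X$ together with the algebra axiom $\xi\circ L\xi=\xi\circ\mu^L_X$.

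Concretely, I would first form the pasting diagram $\vv\in(TLX)_n$ of shape $\kk$ in $LX$ whose $l$-th entry is $\eta^L_X(u_l)$ for $l\ne i$, whose $i$-th entry is $(\spi([n-1]^{(n)}),[x]^{(n)})$, and whose underlined entries are $\eta^L_X(\underline u_l)$; using \cref{prop:s-and-t-for-id-and-ast} to compute the $(n-1)$-boundary of $(\spi([n-1]^{(n)}),[x]^{(n)})$ one checks that $\vv$ is well formed, so that $(\phi,\vv)\in(L^2X)_n$ since $\ar(\phi)=\kk$. Applying $L\xi$ sends the $i$-th entry to $\xi(\spi([n-1]^{(n)}),[x]^{(n)})=\id{n}{}{x}=u_i$ and each remaining entry $\eta^L_X(u_l)$ to $u_l$ (as $\xi\circ\eta^L_X=\mathrm{id}$), so $(L\xi)(\phi,\vv)=(\phi,\uu)$. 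Applying $\mu^L$ instead, and decomposing $\vv$ into its $L1$-part $\cchi\in(TL1)_n$ and its $T^2X$-part $\overline{\vv}\in(T^2X)_n$, a direct comparison with \cref{def:delta-q} gives $\mu^T_X(\overline{\vv})=\delta^i(\uu)$, hence $\mu^L_X(\phi,\vv)=(\psi,\delta^i(\uu))$ where $\psi:=\mu^L_1(\phi,\cchi)$. The algebra axiom then yields $\xi(\phi,\uu)=\xi(\psi,\delta^i(\uu))$, so it remains to prove $\xi(\psi,\delta^i(\uu))\sim\xi(\delta^i(\phi),\delta^i(\uu))$, for which by \cref{cor:coherence} it suffices that $\psi$ and $\delta^i(\phi)$ be parallel $n$-cells of $L1$ of arity $\delta^i(\kk)$.

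The arity of $\psi$ I would compute from the fact that $\ar\colon L\to T$ is a monad morphism with $\ar_{L1}(\phi,\cchi)=\cchi$, so that $\ar(\psi)=\mu^T_1\bigl((T\ar_1)(\cchi)\bigr)$; since $(T\ar_1)(\cchi)$ is exactly the table appearing in \cref{def:delta-q}, this equals $\delta^i(\kk)=\ar(\delta^i(\phi))$. For the boundary, note $s^{L1}_{n-1}(\delta^i(\phi))=s^{L1}_{n-1}(\phi)$ by construction, while globularity of $\mu^L_1$ gives $s^{L1}_{n-1}(\psi)=\mu^L_1\bigl(s^{L1}_{n-1}(\phi),s^{TL1}_{n-1}(\cchi)\bigr)$; I would then identify $s^{TL1}_{n-1}(\cchi)=(T\eta^L_1)\bigl(s^{T1}_{n-1}(\kk)\bigr)$, after which the monad law $\mu^L\circ L\eta^L=\mathrm{id}$, applied to $s^{L1}_{n-1}(\phi)$ (whose arity is $s^{T1}_{n-1}(\kk)$), collapses this to $s^{L1}_{n-1}(\phi)$. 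The same argument for targets shows $\psi$ and $\delta^i(\phi)$ are parallel, so \cref{cor:coherence} applies and the proof is complete.

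The main obstacle is the identity $s^{TL1}_{n-1}(\cchi)=(T\eta^L_1)\bigl(s^{T1}_{n-1}(\kk)\bigr)$. The delicate point is that $\cchi$ differs from $(T\eta^L_1)(\kk)$ in its $i$-th entry, which is $\spi([n-1]^{(n)})$ rather than $\widetilde{e}_n$; however, since $k_i=n$ that entry lies inside an $(n-1)$-transversal component of $\kk$, and forming $s^{TL1}_{n-1}$ replaces the entire component by the label $s^{L1}_{n-1}(\chi_{i_0})$ of its first entry, which is $\widetilde{e}_{n-1}$ regardless of whether that first entry is $\widetilde{e}_n$ or $\spi([n-1]^{(n)})$ (again by \cref{prop:s-and-t-for-id-and-ast} together with the defining equation $\spi([n-1]^{(n)})=\kappa(\ppair{\widetilde{e}_{n-1},\widetilde{e}_{n-1}},[n-1]^{(n)})$). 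Once this bookkeeping about transversal components is settled, everything else reduces to routine manipulation with the monad laws and the explicit formulas for $\mu^L$ and $\ar$.
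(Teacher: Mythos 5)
Your proposal is correct and takes essentially the same route as the paper's own proof: you build the same $n$-cell of $L^2X$ (your $(\phi,\vv)$ is the paper's $(\phi,\widetilde{\uu})$), compare $L\xi$ and $\mu^L_X$ via the algebra axiom, show $\psi=\mu^L_1(\phi,\cchi)$ is parallel to $\delta^i(\phi)$ by the same $\mu^L\circ L\eta^L=\mathrm{id}$ computation, and finish with \cref{cor:coherence}. Your explicit checks of $\ar(\psi)=\delta^i(\kk)$ via the monad morphism and of $s^{TL1}_{n-1}(\cchi)=(T\eta^L_1)\bigl(s^{T1}_{n-1}(\kk)\bigr)$ via the $(n-1)$-transversal components simply make precise steps the paper leaves implicit.
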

\begin{proof}
By the assumption, we have 
    \begin{align*}
    \kk 
    &= \begin{bmatrix}
    k_0 & & \dots & & n & & \dots & & k_r\\
    & \underline k_1 & \dots & \underline k_{i} & & \underline k_{i+1} & \dots & \underline k_r &
    \end{bmatrix}\in(T1)_n
    \end{align*}
and
    \[
    \uu = \begin{bmatrix}
    u_0 & & \dots & & \id{n}{}{x} & & \dots & & u_r\\
    & \underline u_1 & \dots & \underline u_{i} & & \underline u_{i+1} & \dots & \underline u_r &
    \end{bmatrix}\in(TX)_n.
    \]
Define a pasting diagram $\widetilde{\uu}$ in $LX$ by 
    \[
    \widetilde{\uu}
    =
    \begin{bmatrix}
    (\widetilde{e}_{k_0},[u_0]) & & \dots & & (\id{n}{L1}{\widetilde e_{n-1}},[x]) & & \dots & & (\widetilde{e}_{k_r},[u_r])\\
    & (\widetilde{e}_{\underline k_1},[\underline u_1]) & \dots & (\widetilde{e}_{\underline k_{i}},[\underline u_{i}]) & & (\widetilde{e}_{\underline k_{i+1}},[\underline u_{i+1}]) & \dots & (\widetilde{e}_{\underline k_r},[\underline u_r]) &
    \end{bmatrix}.
    \]
    Since $\widetilde{\uu}$ is also of shape $\kk$, we obtain $(\phi,\widetilde{\uu})\in (L^2X)_n$. By $\xi(\widetilde{e}_{k_j},[u_j])=u_j$, $\xi(\widetilde{e}_{\underline k_j},[\underline u_j])=\underline u_j$, and $\xi(\id{n}{L1}{\widetilde e_{n-1}},[x])=\id{n}{X}{x}$, we have 
    \begin{align*}
        (L\xi)(\phi,\widetilde{\uu})&=\bigl(\phi,(T\xi)(\widetilde{\uu})\bigr)=(\phi,\uu)\in(LX)_n.
    \end{align*} 
    
    Let us calculate $\mu^L_X(\phi,\widetilde{\uu})\in (LX)_n$. To this end, we decompose $\widetilde{\uu}\in(TLX)_n$ into 
    \[
    \cchi=
    \begin{bmatrix}
    \widetilde{e}_{k_0} & & \dots & & \id{n}{L1}{\widetilde e_{n-1}} & & \dots & & \widetilde{e}_{k_r}\\
    & \widetilde{e}_{\underline k_1} & \dots & \widetilde{e}_{\underline k_{i}} & & \widetilde{e}_{\underline k_{i+1}} & \dots & \widetilde{e}_{\underline k_r} &
    \end{bmatrix}\in (TL1)_n
    \]
    and 
    \[
    \overline{\uu}=
    \begin{bmatrix}
    [u_0] & & \dots & & [x]^{(n)} & & \dots & & [u_r]\\
    & [\underline u_1] & \dots & [\underline u_{i}] & & [\underline u_{i+1}] & \dots & [\underline u_r] &
    \end{bmatrix} \in (T^2X)_n.
    \]
    Then we have 
    \[
    \mu^L_X(\phi,\widetilde{\uu})=\bigl(\mu^L_1(\phi,\cchi),\mu^T_X(\overline{\uu})\bigr)=\bigl(\mu^L_1(\phi,\cchi),\delta^i(\uu)\bigr).
    \]

    On the other hand, the $n$-cells $\mu^L_1(\phi,\cchi)$ and $\delta^i(\phi)=\kappa\bigl(\ppair{s_{n-1}^{L1}(\phi),t_{n-1}^{L1}(\phi)},\delta^i(\kk)\bigr)$ of $L1$ are parallel.
    This is because 
    \begin{align*}
        s_{n-1}^{L1}\big(\mu^L_1(\phi,\cchi)\big)
        &=\mu^L_1\big(s_{n-1}^{L1}(\phi),s_{n-1}^{TL1}(\cchi)\big)\\
        &=\mu^L_1\Bigl(s_{n-1}^{L1}(\phi),(T\eta^L_1)\bigl(\ar(s_{n-1}^{L1}(\phi))\bigr)\Bigr)\\
        &=\mu^L_1\circ (L\eta^L_1)(s_{n-1}^{L1}(\phi))\\
        &=s_{n-1}^{L1}(\phi)
    \end{align*}
    and similarly $t_{n-1}^{L1}\big(\mu^L_1(\phi,\cchi)\big)=t_{n-1}^{L1}(\phi)$.
    Hence by \cref{cor:coherence}, we have 
    \begin{align*}
    \xi(\phi,\uu)
    &=
    \xi\circ (L\xi)(\phi,\widetilde{\uu})\\
    &=\xi\circ \mu^L_X(\phi,\widetilde{\uu})\\
    &=\xi\bigl(\mu^L_1(\phi,\cchi),\delta^i(\uu)\bigr)\\
    &\sim \xi\bigl(\delta^i(\phi),\delta^i(\uu)\bigr).\qedhere
    \end{align*}
\end{proof}

\begin{definition}
    Let $\kk$ be a pasting scheme of dimension $n$ and rank $r$, and let $\uu$ be a pasting diagram of shape $\kk$ in a globular set $X$.
    By the \emph{set of full-dimensional labels in $\uu$}, we mean the set
    \[
    \fulllabel(\uu) = \{\,u_i \mid 0 \le i \le r, \quad k_i = n\,\} \subseteq X_n.
    \]
    
    Let $S \subseteq X$ be a set of cells of a weak $\omega$-category $X$ (i.e., it consists of $S_n \subseteq X_n$ for each $n \ge 0$ subject to no conditions).
    By the \emph{set of $S$-labelled pastings}, we mean the set
    \[
    \pst(S) = \bigl\{\,\xi(\phi,\uu) \,\big\vert\, n\geq 0,\quad (\phi,\uu) \in (LX)_n, \quad \fulllabel(\uu) \subseteq S\,\bigr\}
    \]
    of cells of $X$.
\end{definition}

Note that the set $\pst(S)$ may contain $n$-cells in $S$ whiskered with $m$-cells not in $S$ with $m < n$.
Using $\pst$, we can now state our main theorem as follows. 
\begin{theorem}
\label{thm:main}
Let $(X,\xi)$ be a weak $\omega$-category and let $I \subseteq X$ be the set of all invertible cells in $X$.
Then we have $\pst(I)\subseteq I$.
\end{theorem}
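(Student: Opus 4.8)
The plan is to apply the coinductive proof method of \cref{rmk:coinduction}: since $I$ is the greatest post-fixed point of $\Phi^X$, it suffices to show that $\pst(I)$ is itself a post-fixed point, i.e.\ that $\pst(I)\subseteq\Phi^X(\pst(I))$. Two easy facts about $\pst(I)$ are used repeatedly. First, $I\subseteq\pst(I)$: any $w\in I\cap X_m$ equals $\xi(\widetilde{e}_m,[w])$, and $\fulllabel([w])=\{w\}$. Second --- and this is the whole point of the construction $\pst$ --- the set $\pst(I)$ is closed under applying standard pasting operations to its own members: a composite $\xi\bigl(\Psi,[\xi(\phi_1,\uu_1),\dots,\xi(\phi_N,\uu_N)]\bigr)$ with each $\fulllabel(\uu_j)\subseteq I$ equals, by the algebra law $\xi\circ L\xi=\xi\circ\mu^L_X$, the single pasting $\xi\bigl(\mu^L_1(\Psi,[\phi_1,\dots,\phi_N]),\mu^T_X([\uu_1,\dots,\uu_N])\bigr)$, and the full-dimensional labels of $\mu^T_X([\uu_1,\dots,\uu_N])$ lie in $\bigcup_j\fulllabel(\uu_j)\subseteq I$. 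In particular $\pst(I)$ is closed under the operations $\comp{k}{}$ and under whiskering by arbitrary cells of strictly lower dimension.

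So fix an $n$-cell $\xi(\phi,\uu)\colon x\to y$ in $\pst(I)$, with $u_i\in I$ for every full-dimensional label of $\uu$; I must exhibit a pseudo inverse together with witnesses lying in $\pst(I)$. If $\uu$ has no full-dimensional label then $\ar(\phi)$ is degenerate and $\xi(\phi,\uu)\in I$ by \cref{prop:base-case}, so assume there is at least one. For each $i$ with $k_i=n$, pick a pseudo inverse $v_i$ of $u_i$ with witnesses $p_i,q_i\in I$. Let $\uuinv$ be obtained from $\uu$ by reversing, within each maximal run of full-dimensional cells composed at dimension $n-1$, the order of the cells and replacing each such $u_i$ by $v_i$, while leaving all other labels --- and the whole arrangement at dimensions $<n$ --- untouched; inspecting the pasting-scheme inequalities shows that $\uuinv$ again has shape $\kk:=\ar(\phi)$. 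Setting $\phiinv:=\kappa\bigl(\ppair{t^{L1}_{n-1}(\phi),s^{L1}_{n-1}(\phi)},\kk^{(n)}\bigr)$, we obtain an $n$-cell $v:=\xi(\phiinv,\uuinv)\colon y\to x$, which will be the pseudo inverse.

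It remains to build $p\colon \xi(\phi,\uu)\comp{n-1}{}v\to\id{n}{}{x}$, and symmetrically $q$, both in $\pst(I)$. Using the algebra law once more, $\xi(\phi,\uu)\comp{n-1}{}v=\xi\bigl(\phi\comp{n-1}{L1}\phiinv,\ \uu\bullet\uuinv\bigr)$, where $\uu\bullet\uuinv$ glues $\uu$ above $\uuinv$ along their common $(n-1)$-boundary; by construction the last full-dimensional cell of each run in $\uu$ sits directly above the corresponding $v_i$. I would construct $p$ as an iterated $\comp{n}{}$-composite of $(n+1)$-cells of three sorts: coherence cells from \cref{cor:coherence}, used to reassociate so as to expose a subterm $u_i\comp{n-1}{}v_i$; whiskerings of the $p_i$, which replace such a subterm by an identity $n$-cell; and the invertible cells from the Unit Law \cref{lem:unit-law}, which delete the freshly created identity label. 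Cancelling the full-dimensional labels one pair at a time in this way, and then invoking \cref{cor:coherence} and \cref{lem:unit-law} once more to identify the remaining pasting --- which now has no full-dimensional label --- with $\id{n}{}{x}$, produces the required $p$. Each link of the chain lies in $\pst(I)$: coherence cells and Unit-Law cells are invertible, hence in $I\subseteq\pst(I)$ by \cref{cor:coherence}; and a whiskering of $p_i$ has its full-dimensional labels among $p_i\in I$ and identity $(n+1)$-cells, the latter invertible by the weak-$\omega$-categorical generalisation of \cref{prop:id-str-omega-cat-invertible} noted after \cref{prop:base-case}. So $p\in\pst(I)$ by the closure property of the first paragraph, and likewise $q\in\pst(I)$; hence $\xi(\phi,\uu)\in\Phi^X(\pst(I))$, which completes the proof.

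The hard part will be the combinatorics hidden in the last two paragraphs: giving a precise, shape-level description of the reversal $\uu\mapsto\uuinv$ and of the reassociate--cancel--delete chain, and verifying that after each deletion one can again expose some $u_i\comp{n-1}{}v_i$ and that the chain terminates on the nose at $\id{n}{}{x}$. This is exactly where the explicit formulas for $\delta^i$ and for the standard pasting instructions $\spi$ will do the work.
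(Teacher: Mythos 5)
Your local construction of the witnesses (reassociate by a coherence cell, cancel one inverse pair by a whiskered $p_i$, delete the resulting identity by the Unit Law, repeat, and treat the label-free remainder by \cref{prop:base-case}) is essentially the chain $w_1,\dots,w_5$ in the paper's proof, so that part of the plan is sound. The genuine gap is in your global coinductive bookkeeping: you take $W=\pst(I)$ as the candidate post-fixed point of $\Phi^X$, so the composed witness $p$ must itself lie in $\pst(I)$; since its links only lie in $\pst(I)$, this needs $\pst(\pst(I))\subseteq\pst(I)$, and your one-line justification of that closure does not work. The identity $\xi\bigl(\Psi,[\xi(\phi_1,\uu_1),\dots,\xi(\phi_N,\uu_N)]\bigr)=\xi\bigl(\mu^L_1(\Psi,[\phi_1,\dots,\phi_N]),\mu^T_X([\uu_1,\dots,\uu_N])\bigr)$ presupposes that the chosen representations $(\phi_j,\uu_j)$ assemble into a cell of $L^2X$, i.e.\ that consecutive ones have \emph{equal} sources and targets in $LX$ and in $L1$, $TX$; but composability of the cells $\xi(\phi_j,\uu_j)$ in $X$ only guarantees equality after applying $\xi$. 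Two composable $n$-cells of $\pst(I)$ can have their common $(n-1)$-boundary presented by pasting diagrams of different shapes, in which case $\mu^T_X([\uu_1,\dots,\uu_N])$ and $\mu^L_1(\Psi,[\phi_1,\dots,\phi_N])$ are not even defined; the same mismatch occurs for the natural representations of consecutive links of your own chain (the target representation produced by a coherence step is a $\mu^L_1$-image, not the instruction used in the next step). As far as one can see, $\pst(\pst(I))\subseteq\pst(I)$ is essentially equivalent to the theorem itself (it is immediate \emph{from} $\pst(I)\subseteq I$), so assuming it is where the circularity hides.

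The paper avoids exactly this by not working with $\pst(I)$: it considers the pre-fixed points $S\supseteq I$ of $\pst$, for which $\pst(S)\subseteq S$ holds by definition and hence the witness chain, whose links lie in $S$, composes inside $\pst(S)\subseteq S$ with no re-representation needed. It then proves that this class is closed under $\Phi^X$ --- by induction on the number $\|\ar(\phi)\|^{(n)}$ of full-dimensional labels, using the notion of $S$-inverse instruction --- and concludes from minimality of the least pre-fixed point $J$ that $J\subseteq\Phi^X(J)$, hence $I=J$ and $\pst(I)\subseteq I$. Note also that every place the paper does invoke the algebra law (the Unit Law and the two coherence comparisons), the element of $L^2X$ is built from unit representations $(\widetilde e_k,[u])$ at all entries except one, precisely so that the boundary-matching you implicitly assume actually holds. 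A secondary, fixable point: your final step identifying the label-free remainder with $\id{n}{}{x}$ needs an algebra-law rewriting of $\id{n}{}{x}$ before \cref{cor:coherence} can be applied, since the two cells are composites of different underlying diagrams and coherence only compares parallel instructions on the same diagram.
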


We need some definitions for the proof of \cref{thm:main}.
For each $n\geq 0$ and each $n$-dimensional pasting scheme $\kk$ of rank $r$, define
\[
\|\kk\|^{(n)}=\bigl\lvert\bigl\{\,i\in\{\,0,1,\dots,r\,\}\,\big\vert\, k_i=n \,\bigr\}\bigr\rvert.
\]
Notice that $\kk$ is non-degenerate as an $n$-cell of $T1$ if and only if $\|\kk\|^{(n)}>0$.

\begin{definition}
    Let $X$ be a weak $\omega$-category and $S\subseteq X$ be a set of cells of $X$.
    Given any $n\geq 1$ and any $n$-cell $u\colon x\to y$ of $X$, an $n$-cell $v\colon y\to x$ of $X$ is called an \emph{$S$-inverse} of $u$ if there exist $(n+1)$-cells ${p}\colon u\comp{n-1}{}v\to \id{n}{}{x}$ and ${q}\colon v\comp{n-1}{}u\to\id{n}{}{y}$ in $S$.
    
    Let $n\geq 1$, $(\phi,\uu)\in(LX)_n$, and $\kk=\ar(\phi)$. An \emph{$S$-inverse instruction} of $(\phi,\uu)$ is an $n$-cell $(\phiinv,\uuinv)$ of $LX$ satisfying the following conditions.
    \begin{itemize}
        \item $s^{L1}_{n-1}(\phiinv)=t^{L1}_{n-1}(\phi)$, 
        \item $t^{L1}_{n-1}(\phiinv)=s^{L1}_{n-1}(\phi)$, and
        \item $\uuinv\in(TX)_n$ is obtained from $\uu$ by replacing, for each $(n-1)$-transversal component $0 \le i \le j \le r$ of $\kk$, the corresponding segment
    \[
    \begin{bmatrix}
    u_{i} & & \dots & & u_{j}\\
    & \underline u_{i+1} & \dots & \underline u_{j} &
    \end{bmatrix}
    \]
    with
    \[
    \begin{bmatrix}
    v_{j} & & \dots & & v_{i}\\
    & \underline u_{j} & \dots & \underline u_{i+1} &
    \end{bmatrix},
    \]
    where $v_{{l}}$ is an $S$-inverse of $u_{{l}}$ for each $i\le l\le j$.\qedhere
    \end{itemize}
\end{definition}

   Note that $u$ admits an $S$-inverse if and only if $u\in\Phi(S)$, and $(\phi,\uu)$ admits an $S$-inverse instruction if and only if $\fulllabel(\uu)\subseteq \Phi(S)$. Also note that the shape of $\uuinv$ is the same as that of $\uu$ (recall that the cell $\underline u_l$ is of dimension $n-1$ for each $i+1\leq l\leq j$), and that the types of $(\phi,\uu)$ and $(\phiinv,\uuinv)$ in $LX$ are related by 
\begin{equation}
\label{eqn:type-phi-u-phiinv-uinv}
\begin{tikzpicture}[baseline=-\the\dimexpr\fontdimen22\textfont2\relax ]
      \node(0) at (0,0.5) {$s_{n-1}^{LX}(\phi,\uu)$};
      \node(1) at (5,0.5) {$t_{n-1}^{LX}(\phi,\uu)$};
      \node(2) at (0,-0.5) {$t_{n-1}^{LX}(\phiinv,\uuinv)$};
      \node(3) at (5,-0.5) {$s_{n-1}^{LX}(\phiinv,\uuinv)$.};
      
      \draw [->] (0) to node[auto, labelsize] {$(\phi,\uu)$} (1); 
      \draw [->] (3) to node[auto, labelsize] {$(\phiinv,\uuinv)$} (2); 
    \node[rotate=90] at (0,0) {$=$};
    \node[rotate=90] at (5,0) {$=$};
\end{tikzpicture}
\end{equation}
(Here we are not claiming that this diagram ``commutes.'')

Although the proof of \cref{thm:main} is rather long, the underlying idea is simple; we show that any $n$-cell of the form $\xi(\phi,\uu)$ with $\fulllabel(\uu) \subseteq I$ admits an ($I$-)inverse, namely $\xi(\phiinv,\uuinv)$, where $(\phiinv,\uuinv)$ is an $I$-inverse instruction of $(\phi,\uu)$ (whose existence follows from $\fulllabel(\uu)\subseteq I=\Phi(I)$).
The non-trivial part is constructing invertible $(n+1)$-cells witnessing the invertibility of $\xi(\phi,\uu)$.
Even in the simplest case of $u_0\comp{0}{}u_1$, where the $1$-cells $u_0\colon x\to y$ and $u_1\colon y\to z$ admit inverses $v_0\colon y\to x$ and $v_1\colon z\to y$ respectively, connecting the composite
$(u_0\comp{0}{}u_1)\comp{0}{}(v_1\comp{0}{}v_0)$
to $\id{1}{}{x}$ requires
\begin{itemize}
    \item rebracketing the expression (using coherence) so that we are composing $u_1$ and $v_1$ first,
    \item whiskering with $u_0$ and $v_0$ the $2$-cell ${p}_1$ witnessing the invertibility of $u_1$,
    \item applying the unit law to obtain $u_0\comp{0}{}v_0$ (getting rid of an extra identity in the middle), and
    \item using the $2$-cell ${p}_0$ witnessing the invertiblity of $u_0$.
\end{itemize}
The following diagram illustrates the situation.
\[
\begin{tikzpicture}[baseline=-\the\dimexpr\fontdimen22\textfont2\relax ]
      \node(0) at (0,0) {$x$};
      \node(2) at (6,0) {$x$};
      \node(3) at (1.2,1) {$y$};
      \node(5) at (4.8,1) {$y$};
      \node(6) at (3,3.3) {$z$};

      \draw [->,bend left=30]  (0) to node[labelsize,auto] {$u_0\comp{0}{}u_1$} (6);
      \draw [->,bend left=30]  (6) to node[labelsize,auto] {$v_1\comp{0}{}v_0$} (2);
      \draw [->]  (0) to node[labelsize,midway, fill=white] {$u_0\comp{0}{}v_0$} (2);
      \draw [->]  (3) .. controls (2.6,2.2) and (3.4,2.2) .. node[labelsize,auto] {$u_1\comp{0}{}v_1$} (5);
     \draw [->]  (0) to node[labelsize,auto,near end] {$u_0$} (3);
     \draw [->]  (5) to node[labelsize,auto,near start] {$v_0$} (2);
      \draw [->,bend right=30] (0) to node[labelsize,auto,swap] {$\id{1}{}{x}$} (2); 
      \draw [->]  (3) to node[labelsize,midway,fill=white] {$\id{1}{}{y}$} (5);

      \draw [->] (3,2.8) to node[auto,swap,labelsize] {$\sim$} (3,2.5);
      \draw [->] (3,1.7) to node[auto,labelsize] {${p}_1$} node[auto,labelsize,swap] {$\sim$} (3,1.4);
      \draw [->] (3,0.65) to  node[auto,labelsize,swap] {$\sim$} (3,0.35);
      \draw [->] (3,-0.4) to node[auto,labelsize] {${p}_0$} node[auto,labelsize,swap] {$\sim$} (3,-0.7);
\end{tikzpicture}
\]
Moreover, we must show that the resulting $2$-cell is itself invertible.
In the actual proof, this last part is treated by considering certain (pre-)fixed points of $\pst$.

When connecting $(u_0 \comp{0}{} u_1) \comp{0}{} (v_1 \comp{0}{}v_0)$ to $\id{1}{}{x}$ in this example, we first cancelled out just one of the inverse pairs (namely $u_1$ and $v_1$) and obtained $u_0 \comp{0}{} v_0$.
In the general case, writing out such intermediate composites can be rather cumbersome.
This is our motivation for introducing the following notation, which allows us to write e.g.
\[
\delta^1_+ \left(\begin{bmatrix}
    u_0 & & u_1\\
    & y &
\end{bmatrix}\right) = \begin{bmatrix}
    u_0 \\ \
\end{bmatrix}
\quad\text{and}\quad
\delta^0_- \left(\begin{bmatrix}
    v_1 & & v_0\\
    & y &
\end{bmatrix}\right) = \begin{bmatrix}
    v_0 \\ \
\end{bmatrix}.
\]

\begin{definition}
    Let $\kk$ be a pasting scheme of dimension $n$ and rank $r$.
    Let $0 \le i \le r$ and suppose $k_i = n$:
    \[
    \kk=
    \begin{bmatrix}
    k_0 & & \dots & & n & & \dots & & k_r\\
    & \underline k_1 & \dots & \underline k_{i} & & \underline k_{i+1} & \dots & \underline k_r &
    \end{bmatrix}.
    \]
    Recall the pasting scheme $\delta^i(\kk)$ of \cref{def:delta-q}.
    Let $\uu$ be a pasting diagram of shape $\kk$ in a weak $\omega$-category $X$.
\begin{enumerate}
    \item Suppose that either $i=r$ or $\underline k_{i+1}<n-1$ holds. 
    We write $\delta^i_+(\uu)$ for the following pasting diagram of shape $\delta^i(\kk)$ in $X$.
    \begin{itemize}
        \item If $i > 0$ and $\underline k_i = n-1$, then $\delta^i_+(\uu)$ is obtained from $\uu$ by removing $u_i$ and $\underline u_i$.
        \item Otherwise, $\delta^i_+(\uu)$ is obtained from $\uu$ by replacing $u_i$ by $s^X_{n-1}(u_i)$.
    \end{itemize}
    \item Suppose that either $i=0$ or $\underline k_{i}<n-1$ holds. 
    We write $\delta^i_-(\uu)$ for the following pasting diagram of shape $\delta^i(\kk)$ in $X$.
    \begin{itemize}
        \item If $i < r$ and $\underline k_{i+1} = n-1$, then $\delta^i_-(\uu)$ is obtained from $\uu$ by removing $u_i$ and $\underline u_{i+1}$.
        \item Otherwise, $\delta^i_-(\uu)$ is obtained from $\uu$ by replacing $u_i$ by $t^X_{n-1}(u_i)$.\qedhere
    \end{itemize}
\end{enumerate}
\end{definition}

\begin{example}
    For\[
\kk=
\begin{bmatrix}
    2 & & 2 & & 2 & & 1 & & 2 \\
    & 1 & & 1 & & 0 & & 0 & 
\end{bmatrix}
\quad\text{and}\quad
\uu=
\begin{bmatrix}
    \alpha & & \id{2}{}{g} & & \beta & & i & & \gamma  \\
    & g & & g & & b & & c & 
\end{bmatrix}
\]
of \cref{example-of-delta}, we have
    \[
\delta^2_+(\uu)=
\begin{bmatrix}
    \alpha & & \id{2}{}{g} & & i & & \gamma \\
    & g & & b & & c & 
\end{bmatrix},
\]
which may be depicted as
\[
\begin{tikzpicture}[baseline=-\the\dimexpr\fontdimen22\textfont2\relax ]
      \node(0) at (0,0) {$a$};
      \node(1) at (2,0) {$b$};
      \node(2) at (4,0) {$c$};
      \node(3) at (6,0) {$d$,};

      \draw [->,bend left=70]  (0) to node[auto,labelsize] {$f$} (1); 
      \draw [->]  (0) to node[labelsize,midway,fill=white] {$g$} (1);
      \draw [->,bend right=70]  (0) to node[auto,swap,labelsize] {$g$} (1);

      \draw [->] (1,0.6) to node[auto,labelsize] {$\alpha$} (1,0.2);
      \draw [->] (1,-0.2) to node[auto,near start,font=\tiny] {$\id{2}{}{g}$} (1,-0.6);

      \draw [->] (1) to node (20) {} (2);
      \path (1) to node[auto,labelsize] {$i$} (2);
      
      \draw [->,bend left=30]  (2) to node[auto,labelsize] {$j$} (3); 
      \draw [->,bend right=30]  (2) to node[auto,swap,labelsize] {$k$} (3);
      
      \draw [->] (5,0.25) to node[auto,labelsize] {$\gamma$} (5,-0.25);
\end{tikzpicture}
\]
and
\[
\delta^4_-(\uu)=
\begin{bmatrix}
    \alpha & & \id{2}{}{g} & & \beta & & i & & k \\
    & g & & g & & b & & c & 
\end{bmatrix},
\]
which may be depicted as
\[
\begin{tikzpicture}[baseline=-\the\dimexpr\fontdimen22\textfont2\relax ]
      \node(0) at (0,0) {$a$};
      \node(1) at (2,0) {$b$};
      \node(2) at (4,0) {$c$};
      \node(3) at (6,0) {$d$.};

      \draw [->]  (0) .. controls (0,2) and (2,2) .. node[auto,labelsize] {$f$} (1);
      \draw [->,bend left=70]  (0) to node[labelsize,midway,fill=white] {$g$} (1);
      \draw [->,bend right=70] (0) to node[labelsize,midway,fill=white] {$g$} (1); 
      \draw [->]  (0) .. controls (0,-2) and (2,-2) .. node[auto,swap,labelsize] {$h$} (1);

      \draw [->] (1,1.4) to node[auto,labelsize] {$\alpha$} (1,0.9);
      \draw [->] (1,0.5) to node[auto,font=\tiny] {$\id{2}{}{g}$} (1,-0.5);
      \draw [->] (1,-0.9) to node[auto,labelsize] {$\beta$} (1,-1.4);

      \draw [->] (1) to node (20) {} (2);
      \path (1) to node[auto,labelsize] {$i$} (2);
      
      \draw [->]  (2) to node[auto,labelsize] {$k$} (3);
\end{tikzpicture}\qedhere
\]
\end{example}



\begin{proof}[Proof of \cref{thm:main}]
Let $\mathrm{Pre}(\pst)_I$ be the set of all pre-fixed points of the monotone map 
\[\pst\colon \mathcal{P}\Big(\coprod_{n\in\N}{X_n}\Big)\to \mathcal{P}\Big(\coprod_{n\in\N}{X_n}\Big)\] 
containing $I$, i.e.,
\[
\mathrm{Pre}(\pst)_I=\Bigl\{\,S\subseteq\coprod_{n\in\N}{X_n} \ \Big\vert \ I\subseteq S,\quad \pst(S)\subseteq S \,\Bigr\}.
\]
Since $\mathrm{Pre}(\pst)_I$ is closed under arbitrary intersections, it has the smallest element $J$. Note that since $S\subseteq \pst(S)$ holds for any $S\subseteq \coprod_{n\in\N}X_n$, all pre-fixed points of $\pst$ are in fact fixed points.

We shall show that \emph{$\mathrm{Pre}(\pst)_I$ is closed under $\Phi$}. Then in particular we have $\Phi(J)\in\mathrm{Pre}(\pst)_I$, and hence $J\subseteq \Phi(J)$ by the minimality of $J$. This implies $J\subseteq I$. On the other hand, $I\subseteq J$ holds since $J\in \mathrm{Pre}(\pst)_I$. Therefore we have $I=J\in\mathrm{Pre}(\pst)_I$ and in particular $\pst(I)\subseteq I$, as desired.

To this end, take $S\in \mathrm{Pre}(\pst)_I$. Then $\Phi(S)$ contains $I$ since $I=\Phi(I)\subseteq \Phi(S)$. Thus it suffices to show that $\pst(\Phi(S))\subseteq \Phi(S)$ holds.
Since every cell in $\pst(\Phi(S))$ is of the form $\xi(\phi,\uu)$ for some pasting instruction $(\phi,\uu)$ (of dimension $\ge 1$) that admits an $S$-inverse instruction, we would want to prove that:
\begin{equation}
\label{eqn:IH}
    \parbox{\dimexpr\linewidth-5em}{for each $n\geq 1$, each $(\phi,\uu)\in(LX)_n$,
        and each $S$-inverse instruction $(\phiinv,\uuinv)$ of $(\phi,\uu)$, there exist $(n+1)$-cells
\[
\xi(\phi,\uu)\comp{n-1}{X}\xi(\phiinv,\uuinv)\to \id{n}{X}{s^X_{n-1}\xi(\phi,\uu)}
\]
and 
\[
\xi(\phiinv,\uuinv)\comp{n-1}{X}\xi(\phi,\uu)\to \id{n}{X}{t^X_{n-1}\xi(\phi,\uu)}
\]
in $S$ (or equivalently in $\pst(S)$).}
\end{equation}
We shall do so by induction on $ \|{\ar(\phi)}\|^{(n)}$.

The base case $\|{\ar(\phi)}\|^{(n)}=0$ is covered by \cref{prop:base-case}; recall that $I\subseteq S$.
So fix $N\geq 0$ and suppose that \cref{eqn:IH} holds for each $n\geq 1$ and $(\phi,\uu),(\phiinv,\uuinv)\in (LX)_n$ with $\|{\ar(\phi)}\|^{(n)}\leq N$. 
Take $(\phi,\uu)\in (LX)_n$ with $\|{\ar(\phi)}\|^{(n)}= N+1$ and its $S$-inverse instruction $(\phiinv,\uuinv)$.

A sketch of the rest of the proof is as follows. We shall show that there exist 
\begin{itemize}
    \item a \emph{sequence} of $(n+1)$-cells in $S$ from $\xi(\phi,\uu)\comp{n-1}{X}\xi(\phiinv,\uuinv)$ to $\id{n}{X}{s^X_{n-1}\xi(\phi,\uu)}$, and 
    \item a \emph{sequence} of $(n+1)$-cells in $S$ from $\xi(\phiinv,\uuinv)\comp{n-1}{X}\xi(\phi,\uu)$ to $\id{n}{X}{t^X_{n-1}\xi(\phi,\uu)}$.
\end{itemize}
Notice that this suffices because $S=\pst(S)$ is closed under compositions.
We only carry out the construction of the former sequence. 
We eventually obtain the sequence
\begin{equation}
\label{eqn:main-proof-structure}
\begin{tikzpicture}[baseline=-\the\dimexpr\fontdimen22\textfont2\relax ]
      \node(a) at (0,3.5) {$\xi(\phi,\uu)\comp{n-1}{X}\xi(\phiinv,\uuinv)$};
      \node(0) at (0,2.5) {$\xi\bigl((\phi,\uu)\comp{n-1}{LX}(\phiinv,\uuinv)\bigr)$};
      \node(1) at (0,1.5) {$\xi(\phi',\uum)$};
      \node(2) at (0,0.5) {$\xi(\phi',\uui)$};
      \node(3) at (0,-0.5) {$\xi\bigl(\delta^{\mainj}(\phi'),\delta^{\mainj}(\uui)\bigr)$};
      \node(b) at (0,-1.5) {$\xi\Bigl(\bigl(\delta^{\mainj}(\phi),\delta^{\mainj}_+(\uu)\bigr)\comp{n-1}{LX}\bigl(\delta^{\maini}(\phiinv),\delta^{\maini}_-(\uuinv)\bigr)\Bigr)$};
      \node(4) at (0,-2.5) {$\xi\bigl(\delta^{\mainj}(\phi),\delta^{\mainj}_+(\uu)\bigr)\comp{n-1}{X}\xi\bigl(\delta^{\maini}(\phiinv),\delta^{\maini}_-(\uuinv)\bigr)$};
      \node(5) at (0,-3.5) {$\id{n}{X}{s^X_{n-1}\xi(\phi,\uu)}$.};
      
      \draw [->] (0,2.2) to node[auto, labelsize] {$w_1$} node[auto,swap,labelsize] {$\sim$} (0,1.8); 
      \draw [->] (0,1.2) to node[auto, labelsize] {$w_2=\xi(\phi'',\uuo)$} (0,0.8); 
      \draw [->] (0,0.2) to node[auto, labelsize] {$w_3$} node[auto,swap,labelsize] {$\sim$}  (0,-0.2); 
      \draw [->] (0,-0.8) to node[auto, labelsize] {$w_4$} node[auto,swap,labelsize] {$\sim$}  (0,-1.2); 
      \draw [->] (0,-2.8) to node[auto, labelsize] {$w_5$} (0,-3.2); 
      \node[rotate=90] at (0.024,3) {$=$};
      \node[rotate=90] at (0.024,-2) {$=$};
\end{tikzpicture}
\end{equation}
Here, $\maini$ and $\mainj$ are suitable integers for which the cells indicated (such as $\delta^{\mainj}_+(\uu)$) are well-defined, and the two equations hold because $\xi$ is a strict $\omega$-functor from $LX$ to $X$.
The $(n+1)$-cells $w_1,w_3$, and $w_4$ are invertible cells induced by coherence (\cref{cor:coherence}), and hence they are in $I\subseteq S$.
The $(n+1)$-cells $w_2$ and $w_5$, on the other hand, are cells in $S=\pst(S)$,
where the latter is produced by the induction hypothesis \cref{eqn:IH} with respect to $N$.

Let 
\[
\ar(\phi)=\kk=
\begin{bmatrix}
    k_0 & & \dots & & k_r\\
    & \underline k_1 & \dots & \underline k_r &
    \end{bmatrix}.
\]
Since $(\phiinv,\uuinv)$ is an $S$-inverse instruction of $(\phi,\uu)$, for each $0\leq i\leq r$ with $k_i=n$, we have $(n+1)$-cells 
\begin{equation}
\label{eqn:omega}
{p}_i\colon u_i\comp{n-1}{X}v_i \to \id{n}{X}{s_{n-1}^X(u_i)}\qquad\text{and}\qquad {q}_i\colon v_i\comp{n-1}{X}u_i \to \id{n}{X}{t_{n-1}^X(u_i)}
\end{equation}
in $S$.
Also notice that $(\phi,\uu)$ and $(\phiinv,\uuinv)$ are $n$-cells in $LX$ composable along the $(n-1)$-dimensional boundary (see \cref{eqn:type-phi-u-phiinv-uinv}), and hence the first equality
\[
\xi(\phi,\uu)\comp{n-1}{X}\xi(\phiinv,\uuinv)=\xi\bigl((\phi,\uu)\comp{n-1}{LX}(\phiinv,\uuinv)\bigr)
\]
in \cref{eqn:main-proof-structure} indeed makes sense (and it holds because $\xi$ is a strict $\omega$-functor).

The $n$-cell $(\phi,\uu)\comp{n-1}{LX}(\phiinv,\uuinv)$ of $LX$ is equal to 
$(\phi\comp{n-1}{L1}\phiinv,\uu\comp{n-1}{TX}\uuinv)$, since both $L!\colon LX\to L1$ and $\ar_X\colon LX\to TX$ are strict $\omega$-functors.
As a pasting diagram, $\uu\comp{n-1}{TX}\uuinv$ is obtained from $\uu$ by replacing, for each $(n-1)$-transversal component $0 \le i \le j \le r$ of $\kk$, the corresponding subsequence
\[
\begin{bmatrix}
u_{i} & & \dots & & u_{j}\\
& \underline u_{i+1} & \dots & \underline u_{j} &
\end{bmatrix}
\]
with
\[
\begin{bmatrix}
u_{i} & & \dots & & u_{j} & & v_{j} & & \dots & & v_{i}\\
& \underline u_{i+1} & \dots & \underline u_{j} & & t_{n-1}^X(u_{j}) & & \underline u_{j} & \dots & \underline u_{i+1} &
\end{bmatrix}.
\]

Now let  $0 \le \maini \le \mainj \le r$ be the leftmost (or first)\footnote{One can choose any $(n-1)$-transversal component here, but this choice simplifies the indices involved.} $(n-1)$-transversal component of $\kk$.
Let $\kkd=\delta^{\mainj}
(\kk\comp{n-1}{T1}\kk)$; that is, $\kkd$ is the pasting scheme obtained from $\kk\comp{n-1}{T1}\kk$ by removing one $n$ in the top row and one $n-1$ in the bottom row from the segment corresponding to the first $(n-1)$-transversal component of $\kk$.
Let $\phi' = \kappa\bigl(\ppair{s_{n-1}^{L1}(\phi),s_{n-1}^{L1}(\phi)},\kkd\bigr)$ and let $\uum$ be the pasting diagram of shape $\kkd$ obtained from 
\[
\uu\comp{n-1}{TX}\uuinv
=
\begin{bmatrix}
u_{0} & & \dots & & u_{\mainj} & & v_{{\mainj}} & & \dots & \\
& \underline u_{1} & \dots & \underline u_{{\mainj}} & & t_{n-1}^X(u_{{\mainj}}) & & \underline u_{{\mainj}} & \dots & 
\end{bmatrix}\footnote{Here we do not write the end of the sequence because it depends on whether $k_r=n$ or not.}
\]
by replacing the segment
\[
\begin{bmatrix}
u_{{\mainj}} & & v_{{\mainj}}\\
& t_{n-1}^X(u_{{\mainj}}) &
\end{bmatrix}
\]
with
\[
\begin{bmatrix}
u_{{\mainj}}\comp{n-1}{X}v_{{\mainj}}\\
\,
\end{bmatrix};
\]
that is, 
\[
\uum=
\begin{bmatrix}
u_{0} & & \dots & & u_{{\mainj}}\comp{n-1}{X}v_{{\mainj}} & & \dots & \\
& \underline u_{1} & \dots & \underline u_{{\mainj}} & & \underline u_{{\mainj}} & \dots & 
\end{bmatrix}.
\]

Let $\uut$ be the pasting diagram of shape $\kkd$ in $LX$ defined as 
\[
\uut=
\begin{bmatrix}
(\widetilde{e}_{k_0},[u_{0}]) & & \dots & & (\widetilde{e}_n\comp{n-1}{L1}\widetilde{e}_n,[u_{{\mainj}}]\comp{n-1}{TX}[v_{{\mainj}}]) & & \dots & \\
& (\widetilde{e}_{\underline k_1},[\underline u_{1}]) & \dots & (\widetilde{e}_{\underline k_{{\mainj}}},[\underline u_{{\mainj}}]) & & (\widetilde{e}_{\underline k_{{\mainj}}},[\underline u_{{\mainj}}]) & \dots &
\end{bmatrix},
\]
where
\[
[u_{\mainj}]\comp{n-1}{TX}[v_{\mainj}]=\begin{bmatrix}
u_{{\mainj}} & & v_{{\mainj}}\\
& t_{n-1}^X(u_{{\mainj}}) &
\end{bmatrix}\in (TX)_n.
\]
Notice that $(\phi',\uut)$ is an $n$-cell of $L^2X$, and that we have 
\begin{align*}
    (L\xi)(\phi',\uut)=\bigl(\phi',(T\xi)(\uut)\bigr)=(\phi',\uum).
\end{align*}
Next we calculate $\mu^L_X(\phi',\uut)$. To this end, we decompose $\uut$ into 
\[
\cchi=
\begin{bmatrix}
\widetilde{e}_{k_0} & & \dots & & \widetilde{e}_n\comp{n-1}{L1}\widetilde{e}_n & & \dots &\\
& \widetilde{e}_{\underline k_1} & \dots & \widetilde{e}_{\underline k_{{\mainj}}} & & \widetilde{e}_{\underline k_{{\mainj}}} & \dots & 
\end{bmatrix}\in(TL1)_n
\]
and 
\[
\uul=
\begin{bmatrix}
[u_{0}] & & \dots & & [u_{{\mainj}}]\comp{n-1}{TX}[v_{{\mainj}}] & & \dots & \\
& [\underline u_{1}] & \dots & [\underline u_{{\mainj}}] & & [\underline u_{{\mainj}}] & \dots & 
\end{bmatrix}\in (T^2X)_n.
\]
Then we have 
\begin{align*}
    \mu^L_X(\phi',\uut)
    &=(\mu^L_1(\phi',\cchi),\mu^T_X(\uul))\\
    &=\left(\mu^L_1(\phi',\cchi),\uu\comp{n-1}{TX}\uuinv\right)\in (LX)_n.
\end{align*}
Now, the $n$-cells $\mu^L_1(\phi',\cchi)$ and $\phi\comp{n-1}{L1}\phiinv$ of $L1$ are parallel. This is because
\begin{align*}
    s^{L1}_{n-1}\bigl(\mu^L_1(\phi',\cchi)\bigr) 
    &= \mu^L_1\bigl(s^{L1}_{n-1}(\phi'),s^{TL1}_{n-1}(\cchi)\bigr)\\ 
    &= \mu^L_1\bigl(s^{L1}_{n-1}(\phi),(T\eta^L_1)\bigl(\ar(s^{L1}_{n-1}\phi)\bigr)\bigr)\\
    &= \mu^L_1\circ(L\eta^L_1)(s^{L1}_{n-1}(\phi))\\
    &= s^{L1}_{n-1}(\phi)\\
    &= s^{L1}_{n-1}\bigl(\phi\comp{n-1}{L1}\phiinv\bigr)
\end{align*}
and similarly 
\begin{align*}
    t^{L1}_{n-1}\bigl(\mu^L_1(\phi',\cchi)\bigr) 
    &= \mu^L_1\bigl(t^{L1}_{n-1}(\phi'),t^{TL1}_{n-1}(\cchi)\bigr)\\ 
    &= \mu^L_1\bigl(s^{L1}_{n-1}(\phi),(T\eta^L_1)\bigl(\ar(s^{L1}_{n-1}\phi)\bigr)\bigr)\\
    &= \mu^L_1\circ(L\eta^L_1)(s^{L1}_{n-1}(\phi))\\
    &= s^{L1}_{n-1}(\phi)\\
    &= t^{L1}_{n-1}(\phiinv)\\
    &= t^{L1}_{n-1}\bigl(\phi\comp{n-1}{L1}\phiinv\bigr).
\end{align*}
Thus by \cref{cor:coherence}, we have an invertible $(n+1)$-cell
\[
w_1\colon\xi\left(\phi\comp{n-1}{L1}\phiinv,\uu\comp{n-1}{TX}\uuinv\right)\to \xi
\left(\mu^L_1(\phi',\cchi),\uu\comp{n-1}{TX}\uuinv\right)
\]
in $X$. Here, the $n$-dimensional domain of $w_1$ is
\begin{align*}
\xi\left(\phi\comp{n-1}{L1}\phiinv,\uu\comp{n-1}{TX}\uuinv\right) &=
    \xi\bigl((\phi,\uu)\comp{n-1}{LX}(\phiinv,\uuinv)\bigr),
\end{align*}
whereas the $n$-dimensional codomain of $w_1$ is
\begin{align*}
    \xi\left(\mu^L_1(\phi',\cchi),\uu\comp{n-1}{TX}\uuinv\right) 
    &=
    \xi\circ\mu^{L}_X(\phi',\uut)\\
    &=\xi\circ(L\xi)(\phi',\uut)\\
    &=\xi(\phi',\uum),
\end{align*}
as indicated in \cref{eqn:main-proof-structure}.

To define $w_2$, let $\kkdd$ be the pasting scheme obtained from $\kkd$ by replacing the middle $n$ in the first $(n-1)$-transversal component by $n+1$, and let $\phi'' = \kappa(\ppair{\phi',\phi'},\kkdd)$.
Combining this $\phi''$ with the pasting diagram 
\[
\uuo=
\begin{bmatrix}
u_{0} & & \dots & & {p}_{\mainj} & & \dots & \\
& \underline u_{1} & \dots & \underline u_{{\mainj}} & & \underline u_{{\mainj}} & \dots & 
\end{bmatrix},
\]
which is obtained from $\uum$ by replacing $u_{{\mainj}}\comp{n-1}{X}v_{{\mainj}}$ with the $(n+1)$-cell
\[
{p}_{\mainj}\colon u_{{\mainj}}\comp{n-1}{X}v_{{\mainj}} \to \id{n}{X}{s_{n-1}^X(u_{{\mainj}})}
\]
in $S$ as in \cref{eqn:omega}, we obtain
\[
w_2=\xi(\phi'',\uuo) \colon \xi(\phi',\uum) \to \xi(\phi',\uui)
\]
in $\pst(S)$, where 
\[
\uui=
\begin{bmatrix}
u_{0} & & \dots & & \id{n}{X}{s_{n-1}^X(u_{{\mainj}})} & & \dots & \\
& \underline u_{1} & \dots & \underline u_{{\mainj}} & & \underline u_{{\mainj}} & \dots & 
\end{bmatrix}.
\]

By \cref{lem:unit-law}, we obtain an invertible $(n+1)$-cell
\[
w_3 \colon \xi(\phi',\uui) \to \xi\bigl(\delta^{\mainj}(\phi'),\delta^{\mainj}(\uui)\bigr)
\]
in $X$.

Next, to define $w_4$, we show that the $n$-cells $\delta^{\mainj}(\phi')$ and $\delta^{\mainj}(\phi)\comp{n-1}{L1}\delta^{\maini}(\phiinv)$ of $L1$ are parallel. Indeed, we have
\begin{align*}
    s^{L1}_{n-1}\bigl(\delta^{\mainj}(\phi')\bigr) 
    &=s^{L1}_{n-1}(\phi')\\
    &=s^{L1}_{n-1}(\phi)\\
    &=s^{L1}_{n-1}\bigl(\delta^{\mainj}(\phi)\bigr)\\
    &=s^{L1}_{n-1}\bigl(\delta^{\mainj}(\phi)\comp{n-1}{L1}\delta^{\maini}(\phiinv)\bigr)
\end{align*}
and 
\begin{align*}
    t^{L1}_{n-1}\bigl(\delta^{\mainj}(\phi')\bigr) 
    &=t^{L1}_{n-1}(\phi')\\
    &=s^{L1}_{n-1}(\phi)\\
    &=t^{L1}_{n-1}(\phiinv)\\
    &=t^{L1}_{n-1}\bigl(\delta^{\maini}(\phiinv)\bigr)\\
    &=t^{L1}_{n-1}\bigl(\delta^{\mainj}(\phi)\comp{n-1}{L1}\delta^{\maini}(\phiinv)\bigr).
\end{align*}
Moreover, we have 
\[
\delta^{\mainj}(\uui)= \delta^{\mainj}_+(\uu)\comp{n-1}{TX}\delta^{\maini}_-(\uuinv).
\]
Therefore by \cref{cor:coherence}, we obtain an invertible $(n+1)$-cell
\[
w_4\colon \xi\bigl(\delta^{\mainj}(\phi'),\delta^{\mainj}(\uui)\bigr)\to \xi\bigl(\delta^{\mainj}(\phi)\comp{n-1}{L1}\delta^{\maini}(\phiinv),\ \delta^{\mainj}_+(\uu)\comp{n-1}{TX}\delta^{\maini}_-(\uuinv)\bigr)
\]
in $X$. The $n$-dimensional codomain of $w_4$ is 
\begin{align*}
    \xi\bigl(\delta^{\mainj}(\phi)\comp{n-1}{L1}\delta^{\maini}(\phiinv),\ \delta^{\mainj}_+(\uu)\comp{n-1}{TX}\delta^{\maini}_-(\uuinv)\bigr)
    &=\xi\Bigl(\bigl(\delta^{\mainj}(\phi),\delta^{\mainj}_+(\uu)\bigr)\comp{n-1}{LX}\bigl(\delta^{\maini}(\phiinv),\delta^{\maini}_-(\uuinv)\bigr)\Bigr),
\end{align*}
as indicated in \cref{eqn:main-proof-structure}.

The second equation in \cref{eqn:main-proof-structure} holds since $\xi$ is a strict $\omega$-functor.

Since $\|{\ar(\delta^{\mainj}(\phi))}\|^{(n)}=N$ and $\bigl(\delta^{\maini}(\phiinv),\delta^{\maini}_-(\uuinv)\bigr)$ is an $S$-inverse instruction of $\bigl(\delta^{\mainj}(\phi),\delta^{\mainj}_+(\uu)\bigr)$, we obtain an $(n+1)$-cell
\[
w_5\colon \xi\bigl(\delta^{\mainj}(\phi),\delta^{\mainj}_+(\uu)\bigr)\comp{n-1}{X}\xi\bigl(\delta^{\maini}(\phiinv),\delta^{\maini}_-(\uuinv)\bigr)\to 
\id{n}{X}{s^X_{n-1}\xi(\delta^{\mainj}(\phi),\delta^{\mainj}_+(\uu))}
\]
in $S$, by the induction hypothesis \cref{eqn:IH}.
Because we have 
\[s^{L1}_{n-1}(\delta^{\mainj}(\phi))=s^{L1}_{n-1}(\phi)\quad\text{and}\quad s^{TX}_{n-1}(\delta^{\mainj}_+(\uu))=s^{TX}_{n-1}(\uu),\] 
the $n$-dimensional codomain of $w_5$ is $\id{n}{X}{s^X_{n-1}\xi(\phi,\uu)}$, as indicated in \cref{eqn:main-proof-structure}.
\end{proof}

\begin{corollary}
\label{cor:eq-rel}
    Let $X$ be a weak $\omega$-category. Then $\sim$ is an equivalence relation on the set of cells of $X$. 
\end{corollary}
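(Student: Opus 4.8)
The plan is to verify, for each $n \ge 0$, that $\sim$ is reflexive, symmetric, and transitive; recall that $x \sim y$ means there exists an invertible $(n+1)$-cell $x \to y$, so in particular cells of distinct dimensions are never related, and the claim reduces to these three properties.

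\emph{Reflexivity} and \emph{symmetry} are formal. For reflexivity, given an $n$-cell $x$ of $X$, I would take $\id{n+1}{X}{x} \colon x \to x$; its source and target are as indicated by \cref{prop:s-and-t-for-id-and-ast}, and it is invertible by the observation following \cref{prop:base-case} (its arity $[n]^{(n+1)} \in (T1)_{n+1}$ is degenerate), so $x \sim x$. For symmetry, suppose $x \sim y$ is witnessed by an invertible $(n+1)$-cell $u \colon x \to y$, with inverse $v \colon y \to x$ and invertible $(n+2)$-cells $p \colon u \comp{n}{} v \to \id{n+1}{}{x}$ and $q \colon v \comp{n}{} u \to \id{n+1}{}{y}$ as in \cref{def:invertible}. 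I would then note that $u$ is itself an inverse of $v$: since $s_n(v) = y$ and $t_n(v) = x$, the cells $q$ and $p$ have precisely the types required to witness the invertibility of $v$. Hence $v \colon y \to x$ is an invertible $(n+1)$-cell, so $y \sim x$.

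\emph{Transitivity} is the only step carrying any content, and it is an immediate application of the main theorem. Given invertible $(n+1)$-cells $u \colon x \to y$ and $u' \colon y \to z$, the composite $u \comp{n}{X} u'$ is defined and is an $(n+1)$-cell $x \to z$ by \cref{prop:s-and-t-for-id-and-ast}. By \cref{def:id-and-comp-in-weak-omega-cat} it has the form $\xi(\phi, \uu)$ for a pasting instruction $(\phi, \uu) \in (LX)_{n+1}$ whose underlying pasting diagram $\uu$ has full-dimensional labels precisely $u$ and $u'$; since $u, u' \in I$, we obtain $u \comp{n}{X} u' \in \pst(I)$, and \cref{thm:main} then gives $u \comp{n}{X} u' \in I$. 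Thus $x \sim z$. The hard work — closure of $I$ under globular pasting — has already been done in \cref{thm:main}, so there is nothing further to prove here.
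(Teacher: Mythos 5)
Your proof is correct and is exactly the argument the paper intends (the corollary is stated without proof as an immediate consequence of the preceding results): reflexivity via the invertibility of $\id{n+1}{}{x}$, symmetry by observing that $q$ and $p$ witness the invertibility of the pseudo-inverse $v$ (so $v\in\Phi^X(I)=I$), and transitivity by recognising $u\comp{n}{}u'$ as an element of $\pst(I)$ and applying \cref{thm:main}. Nothing further is needed.
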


\begin{corollary}
\label{cor:unique-inv}
    Let $X$ be a weak $\omega$-category, $n\geq 1$, $u\colon x\to y$ be an invertible $n$-cell in $X$ and $v,v'\colon y\to x$ be inverses of $u$. Then we have $v\sim v'$.
\end{corollary}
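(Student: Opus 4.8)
The plan is to replay, in the weak setting, the usual strict-categorical proof that inverses are unique, namely the chain
$v \sim v\comp{n-1}{}\id{n}{}{x} \sim v\comp{n-1}{}(u\comp{n-1}{}v') \sim (v\comp{n-1}{}u)\comp{n-1}{}v' \sim \id{n}{}{y}\comp{n-1}{}v' \sim v'$,
exhibiting each link as an invertible $(n+1)$-cell and then concluding $v\sim v'$ by transitivity of $\sim$ (\cref{cor:eq-rel}). Recall that $v$ being an inverse of $u$ provides invertible $(n+1)$-cells $p\colon u\comp{n-1}{}v\to\id{n}{}{x}$ and $q\colon v\comp{n-1}{}u\to\id{n}{}{y}$, and $v'$ being an inverse of $u$ provides invertible $(n+1)$-cells $p'\colon u\comp{n-1}{}v'\to\id{n}{}{x}$ and $q'\colon v'\comp{n-1}{}u\to\id{n}{}{y}$; the chain uses only $p'$ and $q$.

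Three kinds of links appear. The outer two, $v\comp{n-1}{}\id{n}{}{x}\sim v$ and $\id{n}{}{y}\comp{n-1}{}v'\sim v'$, are instances of the Unit Law (\cref{lem:unit-law}) followed by one coherence step: applying \cref{lem:unit-law} to the pasting diagram $\begin{bmatrix} v & & \id{n}{}{x}\\ & x & \end{bmatrix}$ (resp.\ $\begin{bmatrix} \id{n}{}{y} & & v'\\ & y & \end{bmatrix}$) together with the binary composition instruction $\widetilde{m}_n$ deletes the identity label, leaving a standard composite of the single cell $v$ (resp.\ $v'$); since the resulting instruction is parallel to $\widetilde{e}_n$, \cref{cor:coherence} identifies that composite with $v$ (resp.\ $v'$) up to an invertible $(n+1)$-cell. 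The middle link is pure coherence: both $v\comp{n-1}{}(u\comp{n-1}{}v')$ and $(v\comp{n-1}{}u)\comp{n-1}{}v'$ have the form $\xi(\psi,\uu)$ for the common pasting diagram $\uu = \begin{bmatrix} v & & u & & v'\\ & x & & y & \end{bmatrix}$ and parallel instructions $\psi\in L1$, so \cref{cor:coherence} applies. Finally, the two remaining links are whiskerings: whiskering the invertible cell $p'$ on the left by $v$ yields an $(n+1)$-cell between $v\comp{n-1}{}(u\comp{n-1}{}v')$ and $v\comp{n-1}{}\id{n}{}{x}$, and whiskering the invertible cell $q$ on the right by $v'$ yields an $(n+1)$-cell between $(v\comp{n-1}{}u)\comp{n-1}{}v'$ and $\id{n}{}{y}\comp{n-1}{}v'$. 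Each such whiskered cell is $\xi(\psi,\uu)$ with $\fulllabel(\uu)$ the singleton consisting of the invertible cell being whiskered (the whiskering datum $v$, resp.\ $v'$, enters as a lower-dimensional label and so is irrelevant), hence lies in $\pst(I)\subseteq I$ by \cref{thm:main} and is in particular invertible.

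The only work is bookkeeping: for each link one must pick the pasting instruction $\psi$ whose $n$-dimensional source and target are exactly the two composition instructions occurring at the two ends of that link — $\widetilde{m}_n$ for the whiskering links, and the left- and right-bracketed instructions for the coherence link — so that the boundary $n$-cells of the witnessing $(n+1)$-cell are literally the cells named in the chain; such instructions exist by the contraction $\kappa$. I expect this index-matching to be the only obstacle, since all the conceptual content is provided by \cref{thm:main}, \cref{lem:unit-law}, \cref{cor:coherence} and \cref{cor:eq-rel}; the argument is a direct instance of the ``insert a coherence cell for each equality, then apply the main theorem'' recipe described in the introduction.
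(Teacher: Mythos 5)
Your proposal is correct and follows essentially the same route as the paper: the paper's proof is exactly the chain $v \sim v\comp{n-1}{}\id{n}{}{x} \sim v\comp{n-1}{}(u\comp{n-1}{}v') \sim (v\comp{n-1}{}u)\comp{n-1}{}v' \sim \id{n}{}{y}\comp{n-1}{}v' \sim v'$, with the links justified by \cref{lem:unit-law}, \cref{cor:coherence}, whiskering of $p'$ and $q$ made invertible via \cref{thm:main}, and transitivity from \cref{cor:eq-rel}. Your write-up just makes explicit the bookkeeping that the paper leaves implicit.
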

\begin{proof}
    By \cref{lem:unit-law}, we have $v\sim v\comp{n-1}{}\id{n}{}{x}$. Thus we have 
    \begin{align*}
        v &\sim v\comp{n-1}{}\id{n}{}{x}\\
        &\sim v\comp{n-1}{}(u\comp{n-1}{}v')\\
        &\sim (v\comp{n-1}{}u)\comp{n-1}{}v'\\
        &\sim \id{n}{}{y}\comp{n-1}{}v'\\
        &\sim v'.\qedhere
    \end{align*}
\end{proof}

\begin{corollary}
\label{cor:invariance}
    Let $X$ be a weak $\omega$-category, $n\geq 1$, and $u,v\colon x\to y$ be a parallel pair of $n$-cells in $X$ such that $u \sim v$.
    Suppose that $u$ is invertible.
    Then $v$ is invertible too.
\end{corollary}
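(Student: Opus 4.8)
The plan is to show that an inverse $u'$ of $u$ is automatically an inverse of $v$ as well, which by \cref{def:invertible} witnesses the invertibility of $v$; the two non-trivial $(n+1)$-cells that this requires will be assembled from whiskerings of invertible cells, each step of the assembly being controlled by \cref{thm:main}. Since $\sim$ is an equivalence relation (\cref{cor:eq-rel}), from $u\sim v$ we obtain an invertible $(n+1)$-cell $\bar w\colon v\to u$. Fix an inverse $u'\colon y\to x$ of $u$, together with invertible $(n+1)$-cells $p\colon u\comp{n-1}{}u'\to\id{n}{}{x}$ and $q\colon u'\comp{n-1}{}u\to\id{n}{}{y}$.

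For the first required cell, whisker $\bar w$ with $u'$: this is the standard pasting operation of shape $\begin{bmatrix} n+1 & & n \\ & n-1 & \end{bmatrix}$ applied to the pasting diagram whose full-dimensional label is $\bar w$ and whose other label is $u'$, producing an $(n+1)$-cell $\theta\colon v\comp{n-1}{}u'\to u\comp{n-1}{}u'$. The set of full-dimensional labels of this pasting diagram is $\{\bar w\}\subseteq I$, so \cref{thm:main} gives $\theta\in\pst(I)\subseteq I$. Next compose $\theta$ with $p$ along the $n$-cell $u\comp{n-1}{}u'$ (i.e.\ via $\comp{n}{}$) to obtain $\theta\comp{n}{}p\colon v\comp{n-1}{}u'\to\id{n}{}{x}$; since both $\theta$ and $p$ lie in $I$, applying \cref{thm:main} once more to the two-cell pasting diagram $[\theta,p]$ (whose full-dimensional labels are $\theta$ and $p$) shows $\theta\comp{n}{}p\in I$. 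Whiskering $\bar w$ with $u'$ on the other side and composing with $q$ yields, symmetrically, an invertible $(n+1)$-cell $u'\comp{n-1}{}v\to\id{n}{}{y}$. Hence $u'$ is an inverse of $v$, so $v$ is invertible.

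The one point that needs care — the main obstacle — is verifying that the whiskered cell $\theta$ has $n$-dimensional source exactly $v\comp{n-1}{}u'$ and target exactly $u\comp{n-1}{}u'$ on the nose, so that it is genuinely composable with $p$ in the sense of \cref{def:id-and-comp-in-weak-omega-cat}. This reduces to checking that $s^{L1}_{n}$ and $t^{L1}_{n}$ of the standard pasting instruction of shape $\begin{bmatrix} n+1 & & n \\ & n-1 & \end{bmatrix}$ both equal the standard pasting instruction of shape $\begin{bmatrix} n & & n \\ & n-1 & \end{bmatrix}$, which follows directly from the definition of $\spi$ and the description of source and target in $T1$ (the unique $n$-transversal component of $\begin{bmatrix} n+1 & & n \\ & n-1 & \end{bmatrix}$ is the leading $n+1$, and replacing it by $n$ produces $\begin{bmatrix} n & & n \\ & n-1 & \end{bmatrix}$). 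Granting this, the source and target of $\theta\comp{n}{}p$ are read off from \cref{prop:s-and-t-for-id-and-ast}, and every remaining verification is a routine instance of \cref{thm:main}.
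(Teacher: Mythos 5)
Your proposal is correct and follows essentially the same route as the paper: the paper also fixes an inverse $w$ of $u$ and shows $v\comp{n-1}{}w \sim u\comp{n-1}{}w \sim \id{n}{}{x}$ (and dually), which is exactly your whisker-then-compose construction with the whiskering and transitivity steps justified by \cref{thm:main}. Your explicit check that the whiskered cell has source $v\comp{n-1}{}u'$ and target $u\comp{n-1}{}u'$ on the nose is a correct unwinding of what the paper leaves implicit.
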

\begin{proof}
    Since $u$ is invertible, it has an inverse $w$ with $u\comp{n-1}{}w \sim \id{n}{}{x}$ and $w\comp{n-1}{}u \sim \id{n}{}{y}$.
    Thus we have
    \[
    v\comp{n-1}{}w \sim u\comp{n-1}{}w \sim \id{n}{}{x}
    \]
    and similarly $w\comp{n-1}{}v \sim \id{n}{}{y}$.
\end{proof}

\subsection{The core weak \texorpdfstring{$\omega$}{omega}-groupoid of a weak \texorpdfstring{$\omega$}{omega}-category}
We conclude this paper with the construction of the \emph{core weak $\omega$-groupoid} of a weak $\omega$-category.
Let $X$ be a weak $\omega$-category and $n\geq 0$. An $n$-cell $x\in X_n$ is \emph{hereditarily invertible} if either
\begin{itemize}
    \item $n=0$, or 
    \item $n\geq 1$, $x$ is invertible, and $s_{n-1}^X(x)$ and $t_{n-1}^X(x)$ are hereditarily invertible.
\end{itemize}
By definition, the set of all hereditarily invertible cells of $X$ is closed under $s^X_{n}$ and $t^X_{n}$, and hence forms a globular subset $k(X)$ of $X$.
Moreover, $k(X)$ is closed under pasting:

\begin{proposition}
\label{prop:hereditarily-invertible}
    Let $(X,\xi)$ be a weak $\omega$-category, $n\in\mathbb{N}$ and $(\phi,\uu)\in 
    \bigl(L\bigl(k(X)\bigr)\bigr)_n\subseteq (LX)_n$. Then $\xi(\phi,\uu)\in k(X)_n$. 
    Therefore $k(X)$ is also a weak $\omega$-category, which is a weak $\omega$-subcategory (i.e., subobject in $\WkCats{\omega}$) of $X$.
\end{proposition}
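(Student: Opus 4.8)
The plan is to establish the first assertion by induction on $n$, and then to deduce the rest by a routine argument about subobjects in the Eilenberg--Moore category $\WkCats{\omega}$. The base case $n=0$ is immediate, since every $0$-cell is hereditarily invertible, so $k(X)_0=X_0$ and there is nothing to prove. For the inductive step, suppose the claim holds in dimension $n-1$ and let $(\phi,\uu)\in\bigl(L(k(X))\bigr)_n$. We must check that the $n$-cell $\xi(\phi,\uu)$ is invertible and that its $(n-1)$-dimensional source and target are hereditarily invertible.

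Invertibility is exactly where \cref{thm:main} is used: each full-dimensional label of $\uu$ is an $n$-cell of $k(X)$, hence (since $n\ge 1$) invertible in $X$, so that $\fulllabel(\uu)\subseteq I$; thus $\xi(\phi,\uu)\in\pst(I)$, and $\pst(I)\subseteq I$ by \cref{thm:main}, whence $\xi(\phi,\uu)$ is invertible. For the source and target I would use that $\xi$ is a strict $\omega$-functor, so that
\[
s^X_{n-1}\bigl(\xi(\phi,\uu)\bigr)=\xi\bigl(s^{LX}_{n-1}(\phi,\uu)\bigr)
\quad\text{and}\quad
t^X_{n-1}\bigl(\xi(\phi,\uu)\bigr)=\xi\bigl(t^{LX}_{n-1}(\phi,\uu)\bigr).
\]
Because $k(X)$ is a globular subset of $X$, it is closed under the source and target operations on its cells; the explicit description of the source and target maps of $LX$ (together with that of $TX$ recalled in \cref{subsec:T}) then shows that $s^{LX}_{n-1}(\phi,\uu)$ and $t^{LX}_{n-1}(\phi,\uu)$ again lie in $\bigl(L(k(X))\bigr)_{n-1}$. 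Applying the induction hypothesis, their images under $\xi$ lie in $k(X)_{n-1}$, i.e.\ are hereditarily invertible. Combining the three facts gives $\xi(\phi,\uu)\in k(X)_n$, completing the induction.

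For the final sentence, the first assertion says precisely that the composite $L\bigl(k(X)\bigr)\incl LX\xrightarrow{\ \xi\ }X$ factors through $k(X)\incl X$, yielding a globular map $\overline\xi\colon L\bigl(k(X)\bigr)\to k(X)$. Since $k(X)\incl X$ is a monomorphism of globular sets and $\xi$ satisfies the $L$-algebra axioms, the corresponding axioms for $\overline\xi$ follow by cancelling this monomorphism on the left of the relevant identities (using naturality of $\eta^L$ and $\mu^L$). Hence $\bigl(k(X),\overline\xi\bigr)$ is a weak $\omega$-category and the inclusion is a strict $\omega$-functor; as it is moreover monic as a globular map and the forgetful functor $\WkCats{\omega}\to\GSet$ is faithful, it is a monomorphism in $\WkCats{\omega}$, so $k(X)$ is a weak $\omega$-subcategory of $X$. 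The only substantive input here is the appeal to \cref{thm:main} for invertibility; everything else is bookkeeping, and the main point to be careful about is simply that taking sources and targets of a pasting instruction valued in $k(X)$ stays within $k(X)$, so that the induction hypothesis applies.
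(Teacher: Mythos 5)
Your proof is correct and follows essentially the same route as the paper: induction on $n$, with invertibility of $\xi(\phi,\uu)$ supplied by \cref{thm:main} (via $\fulllabel(\uu)\subseteq I$) and hereditary invertibility of the source and target supplied by the induction hypothesis applied to $s^{LX}_{n-1}(\phi,\uu),t^{LX}_{n-1}(\phi,\uu)\in\bigl(L\bigl(k(X)\bigr)\bigr)_{n-1}$. The closing bookkeeping about the induced algebra structure on $k(X)$ and monicity in $\WkCats{\omega}$, which the paper leaves implicit, is also fine.
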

Note that, since $L$ preserves pullbacks, it also preserves monomorphisms.
Therefore it makes sense to regard $L\bigl(k(X)\bigr)$ as a globular subset of $L(X)$.
\begin{proof}[Proof of \cref{prop:hereditarily-invertible}]
    We prove this by induction on $n$.
    The base case $n=0$ is clear, so let $m\geq 1$ and suppose that the claim holds when $n=m-1$. Take any $(\phi,\uu)\in \bigl(L\bigl(k(X)\bigr)\bigr)_m$ and let $x=\xi(\phi,\uu)$. Then $x$ is invertible by \cref{thm:main}. Moreover,
    $s^X_{m-1}(x)=\xi\bigl(s^{L1}_{m-1}(\phi),s^{TX}_{m-1}(\uu)\bigr)$
    is hereditarily invertible by the induction hypothesis since $\bigl(s^{L1}_{m-1}(\phi),s^{TX}_{m-1}(\uu)\bigr)\in \bigl(L\bigl(k(X)\bigr)\bigr)_{m-1}$. Similarly, $t^X_{m-1}(x)$ is hereditarily invertible. 
\end{proof}

We define a \emph{weak $\omega$-groupoid} to be a weak $\omega$-category in which every cell of dimension $\geq 1$ is invertible, or equivalently every cell is hereditarily invertible. 
Let $\WkGpds{\omega}$ be the full subcategory of $\WkCats{\omega}$ consisting of all weak $\omega$-groupoids. Since every strict $\omega$-functor preserves (hereditarily) invertible cells, we see that any strict $\omega$-functor $X\to Y$ from a weak $\omega$-groupoid $X$ to a weak $\omega$-category $Y$ factors through the inclusion $k(Y)\to Y$. Since $k(Y)$ is a weak $\omega$-groupoid, $k\colon \WkCats{\omega}\to\WkGpds{\omega}$ is the right adjoint of the inclusion $\WkGpds{\omega}\to \WkCats{\omega}$.

\begin{remark}
\label{rmk:weak-omega-functors}
    We remark that $k$ also gives rise to the right adjoint of the inclusion functor $\WkGpd{\omega}\to\WkCat{\omega}$, where $\WkCat{\omega}$ is the category (defined in \cite{Garner_homomorphisms}) of weak $\omega$-categories and \emph{weak $\omega$-functors}, and $\WkGpd{\omega}$ is the full subcategory of $\WkCat{\omega}$ consisting of all weak $\omega$-groupoids. To show this, it is essentially enough to observe that weak $\omega$-functors ``preserve'' invertible cells (and hence also hereditarily invertible ones). However, the latter statement perhaps needs some clarification, since a weak $\omega$-functor $X\to Y$ does not induce a globular map between the underlying globular sets of $X$ and $Y$ in general.

    We recall that if $X$ and $Y$ are weak $\omega$-categories, then a weak $\omega$-functor $X\to Y$ defined in \cite{Garner_homomorphisms} comes equipped with a span 
    \begin{equation}
    \label{eqn:span}
\begin{tikzpicture}[baseline=-\the\dimexpr\fontdimen22\textfont2\relax ]
      \node(00) at (0,-0.75) {$X$};
      \node(01) at (3,-0.75) {$Y$};
      \node(10) at (1.5,0.75) {$QX$};
      
      \draw [->] (10) to node[auto, swap,labelsize] {$\varepsilon_X$} (00); 
      \draw [->] (10) to node[auto, labelsize] {$f$} (01); 
\end{tikzpicture}
\end{equation}
    of strict $\omega$-functors, where $\varepsilon_X$ is moreover a \emph{trivial fibration} in the sense that its underlying map between globular sets has the right lifting property with respect to $\iota_n\colon\partial G^n\to G^n$ for all $n\geq 0$. ($Q$ is in fact a comonad on $\WkCats{\omega}$, and $\WkCat{\omega}$ is defined as the Kleisli category of $Q$.) Since every trivial fibration is contractible, we see by \cref{prop:contractible-reflect-invertible,lem:morphism-preserves-invertible-cells} that for any invertible $n$-cell $u$ of $X$, every $n$-cell $\overline{u}$ of $QX$ such that $\varepsilon_X(\overline{u})=u$ is invertible, and so is the $n$-cell $f(\overline{u})$ of $Y$. (Note also that for any cell $u$ of $X$ there exists some cell $\overline{u}$ of $QX$ with $\varepsilon_X(\overline{u})=u$, since a trivial fibration is surjective.) This is what we mean by ``weak $\omega$-functors preserve invertible cells.''

    In order to show that $k$ is the right adjoint of the inclusion $\WkGpd{\omega}\to\WkCat{\omega}$, observe that if $X$ is a weak $\omega$-groupoid, then so is $QX$ (by the presence of a trivial fibration $\varepsilon_X$), and hence the right leg $f$ of the span \cref{eqn:span} factors through $k(Y)$. Therefore the weak $\omega$-functors $X\to Y$ correspond to the weak $\omega$-functors $X\to k(Y)$.
\end{remark}

A completely parallel argument works more generally for $(\infty,n)$-categories with $n\in\mathbb{N}$ in place of weak $\omega$-groupoids, where we say that a weak $\omega$-category is an \emph{$(\infty,n)$-category} if every cell of dimension $>n$ is invertible. 
Thus we can also obtain the \emph{core $(\infty,n)$-category} of any weak $\omega$-category.

\subsection*{Acknowledgements}
The first author acknowledges the support of JSPS Overseas Research Fellowships and an Australian Research Council Discovery Project DP190102432.
The second author is supported by JSPS Research Fellowship for Young Scientists and JSPS KAKENHI Grant Number JP23KJ1365.
The third author was supported by JSPS KAKENHI Grant Number JP21K20329 and JP23K12960.
\bibliographystyle{plain}
\bibliography{mybib}

\begin{thebibliography}{10}

\bibitem{Batanin_98}
M.~A. Batanin.
\newblock Monoidal globular categories as a natural environment for the theory
  of weak {$n$}-categories.
\newblock {\em Adv. Math.}, 136(1):39--103, 1998.

\bibitem{Cheng_dual}
Eugenia Cheng.
\newblock An {$\omega$}-category with all duals is an {$\omega$}-groupoid.
\newblock {\em Appl. Categ. Structures}, 15(4):439--453, 2007.

\bibitem{clingman_thesis}
tslil clingman.
\newblock {\em Towards the theory of proof-relevant categories}.
\newblock PhD thesis, Johns Hopkins University, 2022.

\bibitem{Cottrell_Fujii_hom}
Thomas Cottrell and Soichiro Fujii.
\newblock Hom weak {$\omega$}-categories of a weak {$\omega$}-category.
\newblock {\em Math. Structures Comput. Sci.}, 32(4):420--441, 2022.

\bibitem{Fujii-Hoshino-Maehara-2-out-of-3}
Soichiro Fujii, Keisuke Hoshino, and Yuki Maehara.
\newblock $\omega$-weak equivalences between weak $\omega$-categories.
\newblock In preparation.

\bibitem{Garner_univ}
Richard Garner.
\newblock A homotopy-theoretic universal property of {L}einster's operad for
  weak {$\omega$}-categories.
\newblock {\em Math. Proc. Cambridge Philos. Soc.}, 147(3):615--628, 2009.

\bibitem{Garner_homomorphisms}
Richard Garner.
\newblock Homomorphisms of higher categories.
\newblock {\em Adv. Math.}, 224(6):2269--2311, 2010.

\bibitem{Lafont_Metayer_Worytkiewicz_folk_model_str_omega_cat}
Yves Lafont, Fran\c{c}ois M\'{e}tayer, and Krzysztof Worytkiewicz.
\newblock A folk model structure on omega-cat.
\newblock {\em Adv. Math.}, 224(3):1183--1231, 2010.

\bibitem{Leinster_survey}
Tom Leinster.
\newblock A survey of definitions of {$n$}-category.
\newblock {\em Theory Appl. Categ.}, 10:1--70, 2002.

\bibitem{Leinster_book}
Tom Leinster.
\newblock {\em Higher operads, higher categories}, volume 298 of {\em London
  Mathematical Society Lecture Note Series}.
\newblock Cambridge University Press, Cambridge, 2004.

\bibitem{Power_2-cat_pasting}
A.~J. Power.
\newblock A {$2$}-categorical pasting theorem.
\newblock {\em J. Algebra}, 129(2):439--445, 1990.

\bibitem{Street-oriental}
Ross Street.
\newblock The algebra of oriented simplexes.
\newblock {\em J. Pure Appl. Algebra}, 49(3):283--335, 1987.

\bibitem{Tarski_fixed_point}
Alfred Tarski.
\newblock A lattice-theoretical fixpoint theorem and its applications.
\newblock {\em Pacific J. Math.}, 5:285--309, 1955.

\bibitem{Verity-thesis}
Dominic Verity.
\newblock Enriched categories, internal categories and change of base.
\newblock {\em Repr. Theory Appl. Categ.}, (20):1--266, 2011.

\bibitem{Weber-thesis}
Mark Weber.
\newblock {\em Symmetric operads for globular sets}.
\newblock PhD thesis, Macquarie University, 2001.

\bibitem{Weber-generic}
Mark Weber.
\newblock Generic morphisms, parametric representations and weakly {C}artesian
  monads.
\newblock {\em Theory Appl. Categ.}, 13:No. 14, 191--234, 2004.

\bibitem{Wolff_V-graph}
Harvey Wolff.
\newblock {$V$}-cat and {$V$}-graph.
\newblock {\em J. Pure Appl. Algebra}, 4:123--135, 1974.

\end{thebibliography}
\end{document}